\documentclass[12pt]{amsart}
\usepackage[nobysame,abbrev,alphabetic]{amsrefs}
\usepackage{amssymb}
\usepackage[all]{xy}
\usepackage{stmaryrd}
\usepackage[T2A,T1]{fontenc}

\DeclareMathOperator{\Aut}{Aut}
\DeclareMathOperator{\Char}{Char}
\DeclareMathOperator{\Gal}{Gal}
\DeclareMathOperator{\id}{id}
\DeclareMathOperator{\Id}{Id}
\DeclareMathOperator{\Img}{Im}

\DeclareMathOperator{\Ker}{Ker}
\DeclareMathOperator{\res}{res}

\DeclareMathOperator{\trg}{trg}

\newcommand{\Cl}{\mathrm{Cl}}
\newcommand{\Div}{\mathrm{Div}}
\newcommand{\discup}{\ \ensuremath{\mathaccent\cdot\cup}}

\newcommand{\nek}{,\ldots,}
\newcommand{\inv}{^{-1}}
\newcommand{\isom}{\cong}
\newcommand{\ndiv}{\hbox{$\,\not|\,$}}
\newcommand{\ord}{\mathrm{ord}}
\newcommand{\pr}{\mathrm{pr}}

\newcommand{\val}{\mathrm{val}}

\addtolength{\textwidth}{20pt}
\addtolength{\evensidemargin}{-10pt}
\addtolength{\oddsidemargin}{-10pt}
\addtolength{\textheight}{15pt}


\raggedbottom

\newtheorem{thm}{Theorem}[section]
\newtheorem{cor}[thm]{Corollary}
\newtheorem{lem}[thm]{Lemma}
\newtheorem{prop}[thm]{Proposition}
\newtheorem{defin}[thm]{Definition}
\newtheorem{exam}[thm]{Example}
\newtheorem{rem}[thm]{Remark}

\swapnumbers

\newtheorem{examples}[thm]{Examples}
\numberwithin{equation}{section}

\newcommand{\alp}{\alpha}
\newcommand{\gam}{\gamma}
\newcommand{\Gam}{\Gamma}
\newcommand{\del}{\delta}
\newcommand{\Del}{\Delta}
\newcommand{\eps}{\epsilon}
\newcommand{\lam}{\lambda}
\newcommand{\Lam}{\Lambda}
\newcommand{\sig}{\sigma}

\newcommand{\dbF}{\mathbb{F}}

\newcommand{\dbI}{\mathbb{I}}

\newcommand{\dbQ}{\mathbb{Q}}
\newcommand{\dbR}{\mathbb{R}}
\newcommand{\dbU}{\mathbb{U}}
\newcommand{\dbV}{\mathbb{V}}
\newcommand{\dbZ}{\mathbb{Z}}

\newcommand{\grm}{\mathfrak{m}}

\newcommand{\grq}{\mathfrak{q}}

\newcommand{\calG}{\mathcal{G}}

\newcommand{\CyrB}{\mbox{\usefont{T2A}{\rmdefault}{m}{n}\CYRB}}

\setcounter{tocdepth}{1}

\begin{document}

\title[Linking Invariants for Fields]{Linking Invariants for Valuations and Orderings on Fields}

\author{Ido Efrat}
\address{Earl Katz Family Chair in Pure Mathematics\\
Department of Mathematics\\
Ben-Gurion University of the Negev\\
P.O.\ Box 653, Be'er-Sheva 84105\\
Israel} \email{efrat@bgu.ac.il}

\thanks{This work was supported by the Israel Science Foundation (grant No.\ 569/21). \\
Data sharing not applicable to this article as no data sets were generated or analysed during the current study. 
Author declares no conflict of interests.}

\keywords{linking invariants, linking structures, Milnor invariants, Legendre symbols, R\'edei symbols, valuations, orderings, Massey products}

\subjclass[2010]{Primary 11R32, Secondary  11R34, 12G05}

\begin{abstract}
The mod-$2$ arithmetic Milnor invariants, introduced by Morishita, provide a decomposition law for primes in canonical Galois extensions of $\dbQ$ with unitriangular Galois groups, and contain the Legendre and R\'edei symbols
as special cases.
Morishita further proposed a notion of mod-$q$ arithmetic Milnor invariants, where $q$ is a prime power, for number fields containing the $q$th roots of unity and satisfying certain class field theory assumptions.
We extend this theory from the number field context to general fields, by introducing a notion of a linking invariant for discrete valuations and orderings.
We further express it as a Magnus homomorphism coefficient, and relate it to Massey product elements in Galois cohomology.
\end{abstract}

\maketitle

\tableofcontents

\section{Introduction}
\subsection{Background}
The classical Legendre symbol $(p_1/p_2)$ for distinct prime numbers $p_1,p_2$ describes the decomposition behavior of $p_2$ in the quadratic number field $\dbQ(\sqrt{p_1})$.
On 1939 R\'edei \cite{Redei39} introduced a triple symbol $[p_1,p_2,p_3]\in\{\pm1\}$, for odd primes $p_1,p_2,p_3$, which describes the decomposition pattern of $p_3$ in a certain Galois extension $K_{\{p_1,p_2\}}$ of $\dbQ$ with a dihedral Galois group of order $8$.
In the early 2000s, Morishita introduced a far-reaching generalization of these symbols, the \textsl{mod-$2$ arithmetic Milnor invariant} (\cite{Morishita02}, \cite{Morishita04}):
Given primes numbers $p_1\nek p_n,p_{n+1}$, $n\geq1$, this symbol describes the decomposition law of $p_{n+1}$ in a certain $\dbU_n(\dbF_2)$-Galois extension of $\dbQ$, associated with $p_1\nek p_n$.
Here $\dbU_n(R)$ denotes the group of all unipotent upper-triangular $(n+1)\times (n+1)$-matrices over the unital ring $R$.
Thus the Legendre and R\'edei symbols are the first and second symbols in this infinite sequence of arithmetic objects.
A candidate for the next step in this sequence, namely a 4-fold symbol with values in $\{\pm1\}$, was constructed by Amano \cite{Amano14b}.

More generally, for a prime power $q$, Morishita proposed to define \textsl{mod-$q$ arithmetic Milnor invariants} over number fields $F$ with values in $\dbZ/q$.
Here $F$ is assumed to contain the $q$th roots of unity and to satisfy certain assumptions arising from class field theory; in particular, the class number $|\Cl(F)|$ should be prime to $q$.
An explicit example was constructed by Amano, Mizusawa and Morishita \cite{AmanoMizusawaMorishita18}, who define a 3-fold mod-$3$ symbol over the Eisenstein number field $\dbQ(\zeta_3)=\dbQ(\sqrt{-3})$.

Morishita's construction was motivated by the analogy between prime numbers in $\dbQ$ and knots in the $3$-sphere $S^3$. 
This general philosophy, known as \textsl{arithmetic topology}, was suggested by Mumford, Manin and Mazur, and later developed by Kapranov, Morishita, Reznikov, and others.
We refer to \cite{Morishita02} and \cite{Morishita04} for more information on the development of these ideas.
Indeed, in knot theory Milnor defined similar symbols, called \textsl{higher linking invariants} \cite{Milnor57}, for links in $S^3$, and more generally, 
over \textsl{$q$-homology $3$-spheres}, that is, closed oriented $3$-manifolds $M$ with $H_1(M,\dbZ/q)=0$.
Morishita's invariants are their arithmetic analog over $\dbQ$, and over number fields as above, respectively.

\subsection{Contents of the present work}
In this paper, we extend parts of Morishita's theory from the context of number fields to the context of \textsl{general fields} $F$ containing the roots of unity of order $q=p^s$, with $p$ prime.
In this generalization, finite and infinite primes are replaced by discrete valuations and orderings, respectively, where the latter are considered only when $q=2$.

Morishita's invariants are closely related to the maximal pro-$p$ Galois group of the number field $F$ that is unramified outside a given finite list of primes. 
In the general field context, we take a set $\widehat I$ of discrete valuations or orderings on $F$ (when $q=2$).
Factoring $G_F(p)$ by its closed normal subgroup generated by the inertia subgroups of the discrete valuations not in $\widehat I$, and the $\dbZ/2$-subgroups corresponding to the orderings not in $\widehat I$, we obtain a Galois group $G_{F,{\widehat I}}(p)$.
We further assume that this group is generated by the images of the inertia groups and $\dbZ/2$-subgroups corresponding to the valuations and orderings, respectively, in a subset $I$ of $\widehat I$ consisting of $n$ independent valuations and orderings (this is the analog of the condition $p\ndiv|\Cl(F)|$ in the number field case).
For a discrete valuation $v=v_j$ (resp., an ordering $P=P_j$), with $j\not\in {\widehat I}$, we construct a \textsl{linking invariant} $[{\widehat I},I,j]$ (see Section \ref{section on Kummer linking structures} for the indexing convention).
It is a pro-$p$ group homomorphism
\[
[{\widehat I},I,j]\colon Z_j\to\dbZ/q,
\]
where $Z_j$ is a decomposition group (resp., a Galois group of a relative real closure) in $G_F(p)$ associated with $v$ (resp., $P$); see Section \ref{section on Kummer linking structures}.
In the main examples, where $F=\dbQ$, we take $I$ to be a finite set of prime numbers and take $\widehat I$ to be this set together with the unique ordering on $\dbQ$.
Then the linking invariant, when properly interpreted, contains the Legendre and R\'edei symbols as special cases (Section \ref{section on number fields}).
As in the works of Milnor and Morishita, $[{\widehat I},I,j]$ is defined only under the assumption that certain linking invariants for proper subsets of $\widehat I$ and $I$ vanish.

The homomorphism $[{\widehat I},I,j]$ is obtained from a canonical pro-$p$ group epimorphism 
\[
\rho\colon G_{F,{\widehat I}}(p)\to\dbU_n(\dbZ/q),
\]
by restricting it to $Z_j$ and projecting into the upper-right entry of $\dbU_n(\dbZ/q)$.
This homomorphism corresponds via Galois theory to a $\dbU_n(\dbZ/q)$-Galois extension $E_\rho$ of $F$.
Among the main properties of $E_\rho$ and the linking invariant $[{\widehat I},I,j]$ we have (see Subsection \ref{subsection on ramification and decomposition}):
\begin{itemize}
\item
The discrete valuations in $I$ ramify in $E_\rho$ with ramification groups $\dbZ/q$, whereas discrete valuations on $F$ outside $\widehat I$ are unramified in $E_\rho$.
\item
The orderings in $I$ do not extend to $E_\rho$, whereas orderings on $F$ outside $\widehat I$ extend to $E_\rho$.
\item
If $j$ corresponds to a valuation $v$, then $[{\widehat I},I,j]=0$ if and only if $v$ is completely decomposed in $E_\rho$.
\item
If $j$ corresponds to an ordering, then $[{\widehat I},I,j]=0$. 
\end{itemize}

We further define a notion of a \textsl{linking homomorphism} associated to two discrete valuations (Definition \ref{linking homomorphism}).
It is the field-theoretic generalization of Morishita's \textsl{linking numbers} of two finite primes in a number field and is, in turn, analogous to the linking number of two knots in the topological context. 

As conjectured by Stalling and proved by Turaev \cite{Turaev79} and Porter \cite{Porter80}, the Milnor invariants in knot theory can be interpreted cohomologically as Massey products, which are cohomological operations extending the cup product.
It was shown by Morishita \cite{Morishita04} that his arithmetic Milnor invariants also have such an interpretation.
This may be viewed as a generalization of the well-known interpretation of Legendre symbols as cup products.
We similarly relate our generalized linking invariants to Massey product elements in Galois cohomology -- see Section \ref{section on cohomological interpretation of linking invariants}.

\subsection{Strategy}
Needless to say, our approach is generally inspired by the above-mentioned works of Morishita.
Further, it is based on an abstract algebraic notion, which we call a \textsl{pro-$p$ linking structure}.
Loosely speaking, it consists of a pro-$p$ group $G$, families of closed subgroups $Z_i\trianglerighteq T_i$ of $G$, as well as information on the required image of the $T_i$'s in a certain target pro-$p$ group $U$.
A \textsl{globalization} for the linking structure is a homomorphism $G\to U$ which is consistent with this local information. 
In our field-theoretic applications, we apply this to $G=G_{F,{\widehat I}}(p)$ and the decomposition and inertia subgroups $Z_i,T_i$, $i\in I$, respectively (in the valuation case), or to the Galois groups of relative real closures (in the case of orderings in $I$). 
This gives rise to a linking structure $\calG_{{\widehat I},I}$ with target group $U=\dbU_n(\dbZ/q)$, which we call the ($I$-indexed) \textsl{Kummer linking structure of $F$ unramified outside $\widehat I$}.
A globalization for this structure gives rise to the homomorphism $\rho$, whence to the Galois extension $E_\rho$ and to the linking invariants $[{\widehat I},I,j]$.

The actual construction of our linking invariants is by induction on $n=|I|$:  
For $n=1$ the globalization is essentially a Kummer homomorphism of a uniformizer for the discrete valuation in $I$, or of a negative element in the case of orderings.
The induction step uses a fiber product construction for the groups $\dbU_n(\dbZ/q)$.
Moreover, it is based on the assumption that the underlying group $G_{{\widehat I}}$ of $\calG_{{\widehat I},I}$ has a certain presentation using generators and relations, which we call a group of \textsl{link type}.
This group structure extends results by Koch and Hoechsmann on the structure of $G_{F,{\widehat I}}(p)$ in the number field case, and follows Morishita's approach  (see Theorem \ref{KochHoechsmann}).
It is analogous to link groups in the knot theory.

\subsection{The Structure of the paper}
Section \ref{section on unitriangular matrices} provides some basic facts about the unitriangular group $\dbU_n(R)$, and quotients of it consisting of partial matrices.
Section \ref{section on linking structures} introduces linking structures and their globalizations in an abstract pro-$p$ context, as well as the notion of a linking structure $\calG$ of $V$-link type, where $V$ is a closed normal subgroup of the target group of $\calG$.
We show that in this case, and under a certain local triviality condition, any globalization for $\calG/V$ lifts to a globalization for $\calG$ (Proposition \ref{lifting construction}).
Section \ref{section on the structures GI} introduces the linking structure $\calG_{{\widehat I},I}$, which is an abstract pro-$p$ analog of the maximal pro-$p$ Galois group $G_{F,{\widehat I}}(p)$ of a number field unramified outside a finite set $\widehat I$ of primes.
We show how globalizations for such linking structures can be combined using a fiber product construction for unitriangular groups, to obtain globalizations for structures $\calG_{{\widehat I},I}$ with larger sets ${\widehat I},I$.
Globalizations for $\calG_{{\widehat I},I}$ give rise to our main notion of the \textsl{linking invariant} $[{\widehat I},I,j]$.
Our approach here is different from that of \cite{Morishita02} and \cite{Morishita04}, where the arithmetic Milnor invariants are defined as coefficients of the Magnus homomorphism of certain Frobenius lifts.
However, in Section \ref{section on linking invariants and Magnus theory} we show that, assuming that the groups $T_i$ are free pro-$p$-cyclic, $[{\widehat I},I,j]$ can also be interpreted in the relevant cases in terms of Magnus homomorphisms. 

Then we turn to the arithmetic situation and recall in Section \ref{section on valuations and orderings} the needed notions and facts on valuations and orderings on fields.
Crucial to our approach is the connection between inertia groups of valuations and the Kummer pairing, which is discussed in Section \ref{section on inertia group and the Kummer pairing}.
Then we introduce in Section \ref{section on Kummer linking structures} our main construction of \textsl{Kummer linking structures} of fields with respect to a sets $I\subseteq {\widehat I}$ of valuations and orderings.
We explain the meaning of the linking invariant $[{\widehat I},I,j]$ in this arithmetic context in terms of ramification and decomposition of valuations and orderings in the Galois extension $E_\rho$ associated with a globalization $\rho$ for $\calG_{{\widehat I},I}$.
We also define the \textsl{linking homomorphism} ${\rm lk}(v_j,v_i)$ of two discrete valuations.

In Sections \ref{section on number fields} and \ref{section on classical invariants over Q} we focus on number fields.
We interpret some of the main results of Morishita and Amano in the language of linking structures and globalizations, and in particular, interpret the Legendre and R\'edei symbols as linking invariants arising from globalizations of Kummer structures.
Here $I$ consists of discrete valuations corresponding to primes numbers $p_i\equiv1\pmod4$, and $\widehat I$ consists of $I$ together with the unique ordering on $\dbQ$. 

Finally, in Section \ref{section on cohomological interpretation of linking invariants} we go back to the general pro-$p$ setting of the linking structures $\calG_{{\widehat I},I}$, and interpret the linking invariant $[{\widehat I},I,j]$  cohomologically in terms of Massey product elements. 

I thank Masanori Morishita and the referees for their important comments on earlier versions of this paper, and in particular regarding the formulation of Theorem \ref{KochHoechsmann}.

\section{Unitriangular Matrices}
\label{section on unitriangular matrices}
\subsection{Preliminaries}
\label{subsection on preliminaries}
We first recall some notions and facts from \cite{Efrat22}.
Fix a non-negative integer $n$ and let 
\[
\dbI_n=\{(k,l)\ |\ 1\leq k\leq l\leq n+1\}, \quad
\Del_n=\{(k,k)\ |\ 1\leq k\leq n+1\}.
\]
We view $\dbI_n$ and $\Del_n$ as the sets of upper-right entries, resp., the diagonal, of an $(n+1)\times(n+1)$-matrix.

A subset $S$ of $\dbI_n$ is called \textsl{convex} if:
\begin{enumerate}
\item[(i)]
$\Del_n\subseteq S$;
\item[(ii)]
For every $(k,l)\in S$ and $(k',l')\in\dbI_n$ such that $k\leq k'$ and $l'\leq l$ also $(k',l')\in S$. 
\end{enumerate}
Loosely speaking, (ii) means that $S$ has no ``holes'' towards the diagonal.

We fix a profinite unital commutative ring $R$ with additive group $R^+$.
For a convex subset $S$ of $\dbI_n$, we write $\dbU_S$ for the set of all partial matrices $(a_{kl})_{(k,l)\in S}$ with $a_{kl}\in R$ and $a_{kk}=1$ for $1\leq k\leq n$.
It forms a group under the multiplication map
\[
(a_{kl})_{(k,l)\in S}\cdot (b_{kl})_{(k,l)\in S}=\Bigl(\sum_{r=k}^la_{kr}b_{rl}\Bigr)_{(k,l)\in S}.
\]
We write $\Id_{\dbU_S}=(\del_{kl})_{(k,l)\in S}$ for the identity element in $\dbU_S$.
In particular, 
\[
\dbU_n=\dbU_{\dbI_n}
\]
may be identified with the group of all unitriangular (i.e., unipotent and upper-triangular) $(n+1)\times(n+1)$-matrices over $R$.

For $(k,l)\in S$ let $\pr_{kl}\colon \dbU_S\to R$ be the projection map on the $(k,l)$-entry.

Given $(k,l)\in S\setminus\Del_n$ and $r\in R$, we write $\Id_S+rE_{kl}$ for the matrix in $\dbU_S$ which is $1$ on $\Del_n$ and $r$ at entry $(k,l)$, and is $0$ at all other entries.
We write $\Id_S+RE_{kl}$ for the set of all such matrices with $r\in R$. 

\begin{lem}
\label{elementary matrices}
\begin{enumerate}
\item[(a)]
For every $(k,l)\in S\setminus\Del_n$ the subset $\Id_S+RE_{kl}$ forms a subgroup of $\dbU_S$ which is isomorphic to $R^+$.
\item[(b)]
The partial matrices $\Id_{\dbU_n}+E_{k,k+1}$, $k=1,2\nek n$, generate $\dbU_n$.
\end{enumerate}
\end{lem}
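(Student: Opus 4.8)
For part (a), I would argue by a direct entrywise computation with the multiplication of $\dbU_S$. Fix $(k,l)\in S\setminus\Del_n$, so that $k<l$, and take $r,s\in R$. In the product $(\Id_S+rE_{kl})(\Id_S+sE_{kl})$, computed via $(a_{k'l'})\cdot(b_{k'l'})=\bigl(\sum_{p=k'}^{l'}a_{k'p}b_{pl'}\bigr)$, every diagonal entry equals $1$ and the only nonzero off-diagonal entry occurs at $(k,l)$, where it is $a_{kk}b_{kl}+a_{kl}b_{ll}=s+r$; a contribution with both factors off the diagonal would require an index $p$ with $(k,p)=(p,l)=(k,l)$, hence $k=l$, which is excluded. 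Thus $(\Id_S+rE_{kl})(\Id_S+sE_{kl})=\Id_S+(r+s)E_{kl}$, so $\Id_S+RE_{kl}$ is closed under multiplication, contains $\Id_S$, and is closed under inversion since $(\Id_S+rE_{kl})\inv=\Id_S-rE_{kl}$; hence it is a subgroup. The map $R^+\to\dbU_S$, $r\mapsto\Id_S+rE_{kl}$, is then a continuous injective homomorphism onto this subgroup, and since $R^+$ is compact and $\dbU_S$ is Hausdorff it is an isomorphism of profinite groups onto a closed subgroup.

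For part (b), let $G$ be the closed subgroup of $\dbU_n$ generated by the matrices $X_k:=\Id_{\dbU_n}+E_{k,k+1}$ for $k=1\nek n$. The first step is to recover the full one-parameter subgroups from the single generators: the powers of $X_k$ are exactly the matrices $\Id_{\dbU_n}+(m\cdot1_R)E_{k,k+1}$ with $m\in\dbZ$, and since $r\mapsto\Id_{\dbU_n}+rE_{k,k+1}$ is a closed embedding of $R^+$ by part (a), we get $\Id_{\dbU_n}+RE_{k,k+1}\subseteq G$ for every $k$ (equivalently, one may read the assertion of (b) as saying that the subgroups $\Id_{\dbU_n}+RE_{k,k+1}$ generate $\dbU_n$). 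The second step is a commutator identity, proved by the same kind of computation as in (a): for $k<l<m$ one has $[\Id_{\dbU_n}+rE_{kl},\,\Id_{\dbU_n}+E_{lm}]=\Id_{\dbU_n}+rE_{km}$. Feeding this into an induction on $l-k$ shows that $\Id_{\dbU_n}+RE_{kl}\subseteq G$ for every $(k,l)\in\dbI_n\setminus\Del_n$, since for $l-k\geq2$ we may write $\Id_{\dbU_n}+rE_{kl}=[\Id_{\dbU_n}+rE_{k,l-1},\,\Id_{\dbU_n}+E_{l-1,l}]$ with both factors already in $G$. The third step assembles these along the filtration $\dbU_n=\dbU_n^{(1)}\supseteq\dbU_n^{(2)}\supseteq\dots\supseteq\dbU_n^{(n+1)}=\{\Id_{\dbU_n}\}$ by the closed normal subgroups $\dbU_n^{(d)}$ consisting of the matrices all of whose entries vanish at the positions $(k,l)$ with $0<l-k<d$: each quotient $\dbU_n^{(d)}/\dbU_n^{(d+1)}$ is abelian and is generated by the images of the subgroups $\Id_{\dbU_n}+RE_{kl}$ with $l-k=d$ (in a product of such matrices the cross terms sit at width $\geq2d$, hence in $\dbU_n^{(d+1)}$). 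A downward induction on $d$ then yields $\dbU_n^{(d)}\subseteq G$, and for $d=1$ this gives $G=\dbU_n$.

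The two entrywise computations (the product in (a) and the commutator identity) and the bookkeeping in the filtration step are routine. The one place where something about $R$ is really used is the first step of (b), passing from the single matrix $\Id_{\dbU_n}+E_{k,k+1}$ to the subgroup $\Id_{\dbU_n}+RE_{k,k+1}\isom R^+$: this requires $\dbZ\cdot1_R$ to be dense in $R$, which is automatic for the rings relevant to this paper (e.g.\ $R=\dbZ/q$, and also $R=\dbZ_p$). Once (b) is understood in terms of the subgroups $\Id_{\dbU_n}+RE_{k,k+1}$, the remaining argument is purely formal; the combinatorial core of the proof --- the commutator induction together with the width filtration of $\dbU_n$ --- is where the actual work lies.
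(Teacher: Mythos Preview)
Your proof is correct. The paper itself gives essentially no argument here: it declares (a) ``straightforward'' and for (b) simply cites \cite{Weir55}*{\S1}, so your write-up supplies the details the paper omits. Your direct computation for (a) is exactly the intended ``straightforward'' verification, and your commutator-plus-filtration argument for (b) is the standard one (and is what one finds in the cited reference, there stated for finite fields).

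One remark worth making: you correctly flag that the passage from the single element $\Id_{\dbU_n}+E_{k,k+1}$ to the full subgroup $\Id_{\dbU_n}+RE_{k,k+1}$ uses that $\dbZ\cdot 1_R$ is dense in $R$, which fails for a general profinite ring (e.g.\ $R=\dbF_p[[t]]$). The paper states (b) for an arbitrary profinite unital commutative ring $R$, but in fact only ever applies it after already producing the whole subgroup $\Id_{\dbU_n}+RE_{l,l+1}$ (see the proof of Lemma~\ref{surjectivity}), so the weaker statement ``the subgroups $\Id_{\dbU_n}+RE_{k,k+1}$ generate $\dbU_n$'' is what is actually needed---and your Steps~2 and~3 prove exactly that, for any $R$. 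This is a genuine observation on your part that the paper does not address.
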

\begin{proof}
(a) is straightforward.
For (b) see \cite{Weir55}*{\S1}.
\end{proof}

\subsection{Projection homomorphisms}
\begin{lem}
\label{ttt}
Let $1\leq m\leq n$ and let $\iota\colon \{1,2\nek m+1\}\to\{1,2\nek n+1\}$ be a strictly increasing map.
Let $S'\subseteq S$ be convex subsets of $\dbI_n$ such that every $1\leq r\leq n$ with $(r,r+1)\in S'$ is in the image of $\iota$.
Then:
\begin{enumerate}
\item[(a)]
The subset $\bar S=\{(k,l)\in\dbI_m\ |\ (\iota(k),\iota(l))\in S'\}$ is convex in $\dbI_m$;
\item[(b)]
The map 
\[
u\colon \dbU_S\to\dbU_{\bar S}, \quad (a_{ij})_{(i,j)\in S}\mapsto\bigl(a_{\iota(k),\iota(l)}\bigr)_{(k,l)\in \bar S},
\]
is a group homomorphism.
\end{enumerate}
\end{lem}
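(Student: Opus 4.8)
The plan is to prove (a) directly from the two defining conditions of convexity, and then to reduce (b) to a single combinatorial identity about the image of $\iota$, which is exactly the point where the hypothesis on $\Img(\iota)$ is used.

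For (a), I would verify conditions (i) and (ii) for $\bar S$ in $\dbI_m$. Condition (i) is immediate: for $(k,k)\in\Del_m$ one has $(\iota(k),\iota(k))\in\Del_n\subseteq S'$, so $(k,k)\in\bar S$. For (ii), given $(k,l)\in\bar S$ and $(k',l')\in\dbI_m$ with $k\leq k'$ and $l'\leq l$, strict monotonicity of $\iota$ gives $\iota(k)\leq\iota(k')\leq\iota(l')\leq\iota(l)$, so $(\iota(k'),\iota(l'))\in\dbI_n$, and convexity of $S'$ applied to $(\iota(k),\iota(l))\in S'$ places $(\iota(k'),\iota(l'))$ in $S'$, i.e. $(k',l')\in\bar S$. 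Note that this step uses only that $S'$ is convex and $\iota$ is strictly increasing, not the hypothesis on the image.

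For (b), I would first check that $u$ actually lands in $\dbU_{\bar S}$: for $(k,l)\in\bar S$ the entry $a_{\iota(k),\iota(l)}$ is defined because $(\iota(k),\iota(l))\in S'\subseteq S$, and the diagonal entries $a_{\iota(k),\iota(k)}$ equal $1$, so $u(A)$ is a genuine partial matrix over $\bar S$, which is convex by (a). Then, fixing $A=(a_{ij}),B=(b_{ij})\in\dbU_S$ and $(k,l)\in\bar S$, I would compute both sides at entry $(k,l)$: the $(k,l)$-entry of $u(AB)$ is $(AB)_{\iota(k),\iota(l)}=\sum_{r=\iota(k)}^{\iota(l)}a_{\iota(k),r}b_{r,\iota(l)}$ (a legitimate expression, since $S$ is convex and $(\iota(k),\iota(l))\in S$), whereas the $(k,l)$-entry of $u(A)u(B)$, computed in $\dbU_{\bar S}$, is $\sum_{s=k}^{l}a_{\iota(k),\iota(s)}b_{\iota(s),\iota(l)}$. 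Thus (b) reduces to the identity of index sets $\{\,r\mid\iota(k)\leq r\leq\iota(l)\,\}=\{\,\iota(s)\mid k\leq s\leq l\,\}$.

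This last identity is the only nontrivial point, and it is precisely what the hypothesis delivers. The inclusion $\supseteq$ is clear from monotonicity. For $\subseteq$: if $\iota(k)\leq r\leq\iota(l)$ with $r\neq\iota(k),\iota(l)$, then $1\leq r\leq n$ and $(r,r+1)\in\dbI_n$ with $\iota(k)\leq r$ and $r+1\leq\iota(l)$, so convexity of $S'$ applied to $(\iota(k),\iota(l))\in S'$ forces $(r,r+1)\in S'$; by hypothesis $r\in\Img(\iota)$, and strict monotonicity then gives $r=\iota(s)$ with $k\leq s\leq l$ (the endpoints $r=\iota(k),\iota(l)$ being trivial). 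With this the two sums above agree term by term, so $u(AB)=u(A)u(B)$ for all $A,B$, and $u$ is a group homomorphism. Conceptually, the hypothesis guarantees that no off-diagonal entry of $AB$ lying strictly between columns $\iota(k)$ and $\iota(l)$ escapes being recorded in $\dbU_{\bar S}$.
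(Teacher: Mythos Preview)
Your proof is correct and follows essentially the same approach as the paper's. Both proofs reduce part~(b) to the claim that for $(k,l)\in\bar S$ every integer $r$ with $\iota(k)\leq r\leq\iota(l)$ lies in $\Img(\iota)$, and both establish this by applying convexity of $S'$ to conclude $(r,r+1)\in S'$ and then invoking the hypothesis; your version adds a few extra well-definedness checks (condition~(i) for $\bar S$, that $u$ lands in $\dbU_{\bar S}$) that the paper leaves implicit, but the substance is identical.
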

\begin{proof}
(a) \quad
Let $(k,l)\in \bar S$ and $(k',l')\in \dbI_m$ satisfy $k\leq k'$ and $l\geq l'$.
Then $\iota(k)\leq \iota(k')$ and $\iota(l)\geq \iota(l')$.
As $(\iota(k),\iota(l))\in S'$, the convexity of $S'$ implies that $(\iota(k'),\iota(l'))\in S'$, so $(k',l')\in \bar S$.   

\medskip

(b) \quad
First we claim that if $(k,l)\in \bar S$, then every integer $\iota(k)\leq r\leq\iota(l)$ is of the form $\iota(t)$ for some $k\leq t\leq l$;

This is immediate if $r=\iota(l)$.

If $\iota(k)\leq r<\iota(l)$, then the inclusion $(\iota(k),\iota(l))\in S'$ and the convexity of $S'$ imply that $(r,r+1)\in S'$.
By assumption, this implies that $r=\iota(t)$ for some $1\leq t\leq m+1$.
Since $\iota$ is strictly increasing, $k\leq t<l$, as required.

Now for $(a_{ij})_{(i,j)\in S},(b_{ij})_{(i,j)\in S}\in \dbU_S$ we compute using the above claim:
\[
\begin{split}
u\Bigl(\bigl(a_{ij}\bigr)_{(i,j)\in S}\cdot \bigl(b_{ij}\bigr)_{(i,j)\in S}\Bigr)
&=u\Bigl(\bigl(\sum_{i\leq r\leq j}a_{ir}b_{rj}\bigr)_{(i,j)\in S}\Bigr)\\
&=\Bigl(\sum_{\iota(k)\leq r\leq \iota(l)}a_{\iota(k),r}b_{r,\iota(l)}\Bigr)_{(k,l)\in \bar S}\\
&=\Bigl(\sum_{k\leq t\leq l}a_{\iota(k),\iota(t)}b_{\iota(t),\iota(l)}\Bigr)_{(k,l)\in \bar S}\\
&=(a_{\iota(k),\iota(l)})_{(k,l)\in \bar S}\cdot (b_{\iota(k),\iota(l)})_{(k,l)\in \bar S}\\
&=u\bigl((a_{ij})_{(i,j)\in S}\bigr)\cdot u\bigl((b_{ij})_{(i,j)\in S}\bigr),
\end{split}
\]
as desired.
\end{proof}

\begin{examples}
\label{special projections are homomorphisms}
\rm
(1) \quad
Take $m= n$ and $\iota=\id$.
Then for every convex subsets $S'\subseteq S$ of $\dbI_n$, the projection map $\pr_{S,S'}\colon\dbU_S\to\dbU_{S'}$ is a group homomorphism.

\medskip

(2) \quad
Let $1\leq m\leq n$, $S=\dbI_n$ and $S'=\Del_n\cup\dbI_m$. 
Define the map $\iota\colon\{1,2\nek m+1\}\to \{1,2\nek n+1\}$ by $\iota(k)=k$.
Then $\bar S=\dbI_m$.
We obtain that the projection map on the upper-left $(m+1)\times (m+1)$-submatrix is a group homomorphism $p'_{n,m}\colon\dbU_n\to\dbU_m$ .

Similarly, the projection map on the lower-right $(m+1)\times (m+1)$-submatrix is a group homomorphism $p''_{n,m}\colon\dbU_n\to\dbU_m$ .

\medskip

(3) \quad
By combining the two maps in (2) (or directly), we see that for every $1\leq l\leq n$ the projection map $\pr_{l,l+1}\colon \dbU_n\to\dbU_1\isom R^+$ on the $(l,l+1)$-entry is a group homomorphism.
\end{examples}

\begin{lem}
\label{fiber product lemma}
Let $1\leq m_1,m_2\leq n$ satisfy $m_1+m_2\geq n$, and set $t=m_1+m_2-n$.
Also let $V=\Ker(p'_{n,m_1})\cap\Ker(p''_{n,m_2})$.
Then the maps $p'_{n,m_1}$ and $p''_{n,m_2}$ induce a group isomorphism
\[
\dbU_n/V=\dbU_{m_1}\times_{\dbU_t}\dbU_{m_2},
\]
where the fiber product is taken with respect to $p''_{m_1,t}$ and $p'_{m_2,t}$.
\end{lem}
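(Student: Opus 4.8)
The plan is to realise $\dbU_{m_1}\times_{\dbU_t}\dbU_{m_2}$ as the image of the homomorphism $\phi=(p'_{n,m_1},p''_{n,m_2})$ and to identify $\Ker\phi$ with $V$. I first fix the index bookkeeping: for a matrix in $\dbU_n$, $p'_{n,m_1}$ reads off the entries $a_{kl}$ with $1\le k\le l\le m_1+1$ (the upper-left block, with no reindexing), while $p''_{n,m_2}$ reads off the entries with $n+1-m_2\le k\le l\le n+1$ (the lower-right block), reindexed by shifting indices down by $n-m_2$. Since $m_1-t=n-m_2$, one has $m_1+1-t=n+1-m_2$, and both composites $p''_{m_1,t}\circ p'_{n,m_1}$ and $p'_{m_2,t}\circ p''_{n,m_2}$ are, after the obvious reindexing, the projection onto the central block $\{(k,l)\ |\ n+1-m_2\le k\le l\le m_1+1\}$; in particular they coincide. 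Each map occurring here is a group homomorphism by Examples \ref{special projections are homomorphisms}(2), so $\phi$ is a continuous group homomorphism $\dbU_n\to\dbU_{m_1}\times_{\dbU_t}\dbU_{m_2}$, and clearly $\Ker\phi=\Ker(p'_{n,m_1})\cap\Ker(p''_{n,m_2})=V$.

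It remains to prove surjectivity of $\phi$. Given $(A,B)$ in the fiber product, write $A=(a_{kl})\in\dbU_{m_1}$ and $B=(b_{kl})\in\dbU_{m_2}$ with $p''_{m_1,t}(A)=p'_{m_2,t}(B)$, and define $x=(x_{kl})_{(k,l)\in\dbI_n}$ by $x_{kl}=a_{kl}$ when $l\le m_1+1$, by $x_{kl}=b_{k-n+m_2,\,l-n+m_2}$ when $k\ge n+1-m_2$, and $x_{kl}=0$ otherwise. The two prescriptions overlap exactly on the central block $n+1-m_2\le k\le l\le m_1+1$, and there they agree: unwinding the definitions, $a_{kl}$ is the $(k-n+m_2,l-n+m_2)$-entry of $p''_{m_1,t}(A)$, which by hypothesis equals the $(k-n+m_2,l-n+m_2)$-entry of $p'_{m_2,t}(B)$, namely $b_{k-n+m_2,l-n+m_2}$. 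Moreover every $(k,l)\in\dbI_n$ falls into one of the three cases, so $x\in\dbU_n$ is well defined, and by construction $p'_{n,m_1}(x)=A$ and $p''_{n,m_2}(x)=B$, i.e. $\phi(x)=(A,B)$.

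Thus $\phi$ is a continuous surjective homomorphism with kernel $V$; since $\dbU_n$ is compact (as $R$ is profinite) and the fiber product is Hausdorff, $\phi$ induces the asserted isomorphism $\dbU_n/V\isom\dbU_{m_1}\times_{\dbU_t}\dbU_{m_2}$, which is the canonical identification of the statement. I expect the only real work to be the index bookkeeping --- checking that the two central projections are literally the same map after reindexing, and that the assignment defining $x$ is both consistent on the overlap and exhaustive on $\dbI_n$; once the decomposition of $\dbI_n$ into upper-left block, lower-right block, and the remaining far-upper-right entries is made explicit, the rest is immediate. (When $t=0$ the central block degenerates to the single diagonal entry $(m_1+1,m_1+1)$, the compatibility condition is vacuous, the fiber product is the direct product, and the argument goes through unchanged.)
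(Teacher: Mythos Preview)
Your proof is correct and follows essentially the same approach as the paper: define the homomorphism $(p'_{n,m_1},p''_{n,m_2})$ into the fiber product by checking that the two projections to $\dbU_t$ agree, identify its kernel as $V$, and prove surjectivity by filling in a preimage that is $A$ on the upper-left block, $B$ on the lower-right block, and $0$ elsewhere. Your version is simply more explicit about the index bookkeeping and adds the routine topological remark and the $t=0$ edge case.
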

\begin{proof}
We note that
$p''_{m_1,t}\circ p'_{n,m_1}=p'_{m_2,t}\circ p''_{n,m_2}$
is the projection to rows and columns $n-m_2+1\nek m_1$.
This gives rise to a homomorphism
\[
(p'_{n,m_1},p''_{n,m_2})\colon \dbU_n\to \dbU_{m_1}\times_{\dbU_t}\dbU_{m_2}
\]
with kernel $V$.

This homomorphism is surjective, since any pair $(M_1,M_2)\in\dbU_{m_1}\times_{\dbU_t}\dbU_{m_2}$ has a preimage $M$ in $\dbU_n$, which is $M_1$ on the upper-left $(m_1+1)\times(m_1+1)$-submatrix, is $M_2$ on the lower-right $(m_2+1)\times(m_2+1)$-submatrix, and is $0$ elsewhere.
\end{proof}

\subsection{The subgroups $V_S$}
For a convex subset $S$ of $\dbI_n$ set 
\[
V_S=\Ker\bigl(\pr_{\dbI_n,S}\colon \dbU_n\to\dbU_S\bigr).
\]
For $r\geq1$ consider the following convex subset of $\dbI_n$:
\[
\dbI(n,r)=\{(i,j)\in\dbI_n\ |\ j-i<r\}.
\]

\begin{exam}
\label{Sr}
\rm
\begin{enumerate}
\item[(1)]
One has $\dbI(n,1)=\Del_n$, so $\dbU_{\dbI(n,1)}=\{\Id\}$ and $V_{\dbI(n,1)}=\dbU_n$.
\item[(2)]
One has $\dbI(n,n)=\dbI_n\setminus\{(1,n+1)\}$.
We denote
\[
\overline{\dbU}_n:=\dbU_{\dbI(n,n)}, \quad
\dbV_n:=V_{\dbI(n,n)}=\Id_{\dbU_n}+RE_{1,n+1}\isom R^+,
\]
where the isomorphism is by Lemma \ref{elementary matrices}(a).
\item[(3)]
$\dbI(n,n+1)=\dbI_n$, so $\dbU_{\dbI(n,n+1)}=\dbU_n$ and $V_{\dbI(n,n+1)}=\{\Id_{\dbU_n}\}$.
\end{enumerate}
\end{exam}
For every positive integers $r_1,r_2$, the commutator map in $\dbU_n$ satisfies
\[
[V_{\dbI(n,r_1)},V_{\dbI(n,r_2)}]\subseteq V_{\dbI(n,r_1+r_2)},
\]
e.g., by \cite{Efrat22}*{Prop.\ 2.2}.
In particular,
\begin{equation}
\label{center of Un}
[\dbV_n,\dbU_n]=[V_{\dbI(n,n)},V_{\dbI(n,1)}]=V_{\dbI(n,n+1)}=\{\Id_{\dbU_n}\},
\end{equation}
that is, $\dbV_n$ lies in the center of $\dbU_n$.

\section{Linking Structures}
\label{section on linking structures}
\subsection{Globalizations of linking structures}
We fix a prime number $p$.
A (pro-$p$) \textsl{linking structure}
\begin{equation}
\label{linking structure}
\calG=(G,U,Z_i,T_i,\lam_i,\pi_i)_{i\in I}.
\end{equation} 
consists of the following data:
\begin{itemize}
\item
Pro-$p$ groups $G$ and $U$;
\item
An index set $I$;
\item
For each $i\in I$ a pro-$p$ group $Z_i$ and a closed normal subgroup $T_i$ of $Z_i$ with a fixed (homomorphic) section of the natural  epimorphism $Z_i\to \bar Z_i:=Z_i/T_i$ (i.e., a semi-direct product decomposition $Z_i=T_i\rtimes\bar Z_i$);
\item
For each $i\in I$, pro-$p$ group homomorphisms $\lam_i\colon Z_i\to G$ and $\pi_i\colon T_i\to U$ 
such that $G$ is generated as a pro-$p$ group by the images $\lam_i(T_i)$, $i\in I$.
\end{itemize}

A \textsl{globalization} for $\calG$ is a pro-$p$ group homomorphism $\rho\colon G\to U$ such that for every $i\in I$ there is a commutative square
\[
\xymatrix{
Z_i\ar[d]_{\lam_i}&T_i\ar@{^{(}->}[l]\ar[d]^{\pi_i}&\\
G\ar[r]^{\rho}&U.\\
}
\]

\begin{rem}
\label{uniqueness}
\rm
Since the subgroups $\lam_i(T_i)$, $i\in I$,  generate $G$, there is at most one globalization for $\calG$.
\end{rem}

\begin{rem}
\label{conjugates}
\rm
Suppose that for every $i\in I$ we have $Z_i\leq G$ and $\lam_i$ is the inclusion map.
Let $\sig_i\in G$, $i\in I$, and define homomorphisms 
\[
\begin{split}
&\lam_i^{\sig_i}\colon\sig_iZ_i\sig_i\inv\to G, \quad \sig_i\sig\sig_i\inv\mapsto\sig,\\
&\pi_i^{\sig_i}\colon\sig_iT_i\sig_i\inv\to U, \quad \sig_i\tau\sig_i\inv\mapsto\pi_i(\tau).
\end{split}
\]
We obtain a \textsl{conjugate linking system} 
\[
\calG^{(\sig_i)}=(G,U,\sig_iZ_i\sig_i\inv,\sig_iT_i\sig_i\inv,\lam_i^{\sig_i},\pi_i^{\sig_i})_{i\in I}.
\]
Any globalization $\rho$ for $\calG$ is a globalization for $\calG^{(\sig_i)}$.
\end{rem}

\subsection{Lifting of globalizations in link type structures}
\label{subsection on lifting of globalizations in link type structures}
Consider the free pro-$p$ product $\hat T=\star_{i\in I}T_i$ \cite{NeukirchSchmidtWingberg}*{Ch.\ IV, \S1}.
Let
\[
\hat\lam_T=\star_{i\in I}(\lam_i|_{T_i})\colon \hat T\to G, \quad \hat\pi=\star_{i\in I}\pi_i\colon \hat T\to U.
\]
Then $\hat\lam_T$ is surjective.
A pro-$p$ group homomorphism $\rho\colon G\to U$ is a globalization for $\calG$ if and only if $\rho\circ\hat\lam_T=\hat\pi$.
For $i\in I$ we further set 
\[
\hat Z_i:=\hat\lam_T\inv(\lam_i(\bar Z_i))\subseteq\hat T.
\]

\begin{defin}
\rm
Given a closed subgroup $V$ of $U$, we say that the linking structure $\calG$ has a $V$-\textsl{link type} if $\Ker(\hat\lam_T)$ is generated as a closed normal subgroup of $\hat T$ by elements of 
\[
\Ker(\hat\pi)\bigl[\hat\pi\inv[C_U(V)],\hat Z_i\bigr]
\]
for certain indices $i\in I$, where $C_U(V)$ denotes the centralizer of $V$ in $U$, and $[\cdot,\cdot]$ denotes the commutator closed subgroup.
\end{defin}

\begin{exam}
\label{example of link type group}
\rm
In our arithmetical applications, we will deal with the following setup:
Let $n$ be a positive integer, let $q>1$ be a $p$-power, and let $I=\{i_1\nek i_n\}$ have size $n$.
Take
\[
R=\dbZ/q, \quad U=\dbU_n, \quad V=\dbV_n.
\]
By (\ref{center of Un}), $C_U(V)=\dbU_n$.
Suppose further that $\Img(\pi_i)\subseteq \Id_{\dbU_n}+(\dbZ/q)E_{l,l+1}$ for every $i=i_l\in I$, so by Lemma \ref{elementary matrices}(a), $\hat\pi(\tau^q)=\Id_{\dbU_n}$ for every $\tau\in\hat T$. 
If $\Ker(\hat\lam_T)$ is generated as a closed normal subgroup of $\hat T$ by elements of the form 
\[
\tau^{q\alp}[\tau,\sig]
\]
for some $\alp\in\dbZ_p$, $\tau\in T_i$, and $\sig\in\hat Z_i$, $i\in I$, then  $\calG$ has a $V$-link type.
\end{exam}

Let $\calG$ be as in (\ref{linking structure}) and let $V$ be a closed normal subgroup of $U$ with projection map
$\pr_{U,U/V}\colon U\to U/V$.
There is a \textsl{quotient linking structure}
\[
\calG/V:=(G,U/V,Z_i,T_i,\lam_i,\pr_{U,U/V}\circ\pi_i)_{i\in I}.
\]

\begin{prop}
\label{lifting construction}
\begin{enumerate}
\item[(a)]
Every globalization $\rho$ for $\calG$ which maps $\lam_i(\bar Z_i)$, $i\in I$, into $V$ induces a globalization $\bar\rho$ for $\calG/V$ which is trivial on $\lam_i(\bar Z_i)$, $i\in I$.
\item[(b)]
Assume that $\calG$ has a $V$-link type.
Then every globalization $\bar\rho$ for $\calG/V$ which is trivial on $\lam_i(\bar Z_i)$, $i\in I$, lifts uniquely to a globalization $\rho$ for $\calG$, which moreover,  maps $\lam_i(\bar Z_i)$, $i\in I$, into $V$.
\end{enumerate}
\end{prop}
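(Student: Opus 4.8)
The plan is to phrase both parts through the free pro-$p$ product $\hat T=\star_{i\in I}T_i$ together with the maps $\hat\lam_T\colon\hat T\to G$ and $\hat\pi\colon\hat T\to U$, using the equivalence noted before Remark~\ref{uniqueness}: a homomorphism $\rho\colon G\to U$ is a globalization for $\calG$ if and only if $\rho\circ\hat\lam_T=\hat\pi$, and the analogous statement for $\calG/V$ uses $\pr_{U,U/V}\circ\hat\pi$ in place of $\hat\pi$. I will also use repeatedly that $\hat\lam_T$ is surjective, so that $\hat\lam_T(\hat Z_i)=\lam_i(\bar Z_i)$ for every $i\in I$.

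For part~(a) I would simply set $\bar\rho:=\pr_{U,U/V}\circ\rho$. Then $\bar\rho\circ\hat\lam_T=\pr_{U,U/V}\circ(\rho\circ\hat\lam_T)=\pr_{U,U/V}\circ\hat\pi$, so $\bar\rho$ is a globalization for $\calG/V$; and since $\rho(\lam_i(\bar Z_i))\subseteq V$ for each $i$, we get $\bar\rho(\lam_i(\bar Z_i))=\pr_{U,U/V}(\rho(\lam_i(\bar Z_i)))=\{1\}$, so $\bar\rho$ is trivial on $\lam_i(\bar Z_i)$. This part is purely formal.

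For part~(b) the crux is to deduce from the hypotheses that $\Ker(\hat\lam_T)\subseteq\Ker(\hat\pi)$; granting this, $\hat\pi$ factors through the quotient map $\hat\lam_T$ to yield a homomorphism $\rho\colon G\to U$ with $\rho\circ\hat\lam_T=\hat\pi$, i.e.\ a globalization for $\calG$, which is the unique one by Remark~\ref{uniqueness}. First, from the triviality of $\bar\rho$ on $\lam_i(\bar Z_i)$ I would extract $\hat\pi(\hat Z_i)\subseteq V$ for every $i$: indeed $\pr_{U,U/V}(\hat\pi(\hat Z_i))=\bar\rho(\hat\lam_T(\hat Z_i))=\bar\rho(\lam_i(\bar Z_i))=\{1\}$. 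Next, since $\calG$ has a $V$-link type, $\Ker(\hat\lam_T)$ is the closed normal subgroup of $\hat T$ generated by elements of the sets $\Ker(\hat\pi)\bigl[\hat\pi\inv[C_U(V)],\hat Z_i\bigr]$ for certain $i\in I$; as $\Ker(\hat\pi)$ is itself a closed normal subgroup of $\hat T$, it suffices to show each such generator lies in $\Ker(\hat\pi)$. Writing a generator as $w\cdot c$ with $w\in\Ker(\hat\pi)$ and $c\in[\hat\pi\inv[C_U(V)],\hat Z_i]$, we have $\hat\pi(w)=1$; and for every commutator $[u,z]$ with $\hat\pi(u)\in C_U(V)$ and $z\in\hat Z_i$ we have $\hat\pi(z)\in V$ by the previous step, so $\hat\pi(u)$ centralizes $\hat\pi(z)$ and $\hat\pi([u,z])=[\hat\pi(u),\hat\pi(z)]=1$. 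Thus the closed subgroup $\Ker(\hat\pi)$ contains all these commutators, hence the commutator closed subgroup $[\hat\pi\inv[C_U(V)],\hat Z_i]$, hence $c$, hence $w\cdot c$. This yields $\Ker(\hat\lam_T)\subseteq\Ker(\hat\pi)$ and the globalization $\rho$. Finally, $\pr_{U,U/V}\circ\rho\circ\hat\lam_T=\pr_{U,U/V}\circ\hat\pi=\bar\rho\circ\hat\lam_T$ with $\hat\lam_T$ surjective forces $\pr_{U,U/V}\circ\rho=\bar\rho$, so $\rho$ lifts $\bar\rho$ (uniqueness of the lift being again Remark~\ref{uniqueness}); and $\rho(\lam_i(\bar Z_i))=\rho(\hat\lam_T(\hat Z_i))=\hat\pi(\hat Z_i)\subseteq V$.

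I do not expect a serious obstacle here: the argument is essentially a diagram chase in $\hat T$. The only points needing care are keeping track of which map is in play ($\hat\pi$ versus its composite with $\pr_{U,U/V}$, and $\hat Z_i$ versus $\lam_i(\bar Z_i)$), and the mild topological observations that a closed normal subgroup generated by a subset of the closed normal subgroup $\Ker(\hat\pi)$ again lies in $\Ker(\hat\pi)$, and that the commutator \emph{closed} subgroup $[\hat\pi\inv[C_U(V)],\hat Z_i]$ — not merely the abstract commutator subgroup — lies in $\Ker(\hat\pi)$ once all the generating commutators do.
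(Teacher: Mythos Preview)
Your proof is correct and follows essentially the same route as the paper: for (a) you set $\bar\rho=\pr_{U,U/V}\circ\rho$, and for (b) you deduce $\hat\pi(\hat Z_i)\subseteq V$ from the triviality hypothesis, then use the $V$-link type assumption to conclude $\hat\pi(\Ker(\hat\lam_T))=1$ so that $\hat\pi$ factors through $\hat\lam_T$. Your write-up is in fact more careful than the paper's in spelling out why the generating elements $w\cdot c$ lie in $\Ker(\hat\pi)$ and why $\rho$ lifts $\bar\rho$; the only minor slip is a referencing issue (the equivalence $\rho\circ\hat\lam_T=\hat\pi$ is stated in \S\ref{subsection on lifting of globalizations in link type structures}, not before Remark~\ref{uniqueness}).
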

\begin{proof}
(a)$\Rightarrow$(b): \quad
This is immediate, taking $\bar\rho=\pr_{U,U/V}\circ\rho$.

\medskip

(b)$\Rightarrow$(a): \quad
For $i\in I$ we have 
\[
(\pr_{U/V}\circ\hat\pi)(\hat Z_i)=(\bar\rho\circ\hat\lam_T)(\hat Z_i)=(\bar\rho\circ\lam_i)(\bar Z_i)=1.
\]
Hence $\hat\pi(\hat Z_i)\subseteq V$.
It follows that
\[
\hat\pi\Bigl(\Ker(\hat\pi)\bigl[\hat\pi\inv[C_U(V)],\hat Z_i\bigr]\Bigr)=[C_U(V),V]=1.
\]
As $\calG$ has a $V$-link type, this implies that $\hat\pi(\Ker(\hat\lam_T))=1$.
Since $\hat\lam_T$ is an epimorphism, $\hat\pi$ therefore factors through a pro-$p$ group homomorphism $\rho\colon G\to U$, which is thus a globalization for $\calG$.

The uniqueness of $\rho$ is by Remark \ref{uniqueness}.

The assertion on the images of $\lam_i(\bar Z_i)$, $i\in I$, is immediate.
\end{proof}

\section{The Linking Structure $\calG_{\hat I,I}$}
\label{section on the structures GI}
In this section, we construct linking structures imitating the maximal pro-$p$ Galois group of a number field unramified outside a given set of primes. 
This will be applied in Section \ref{section on Kummer linking structures} in an arithmetic setting.
 \subsection{The construction of $\calG_{{\widehat I},I}$}
Let $R$ be a unital pro-$p$ ring, let $G$ be a pro-$p$ group, and let $Z_j,T_j$, $j\in J$, be closed subgroups of $G$ such that $T_j$ is a normal subgroup of $Z_j$ for every $j$.

Consider a nonempty finite subset $I=\{i_1\nek i_n\}$ of $J$ of $n$ elements such that $Z_i=T_i\rtimes\bar Z_i$, where $\bar Z_i=Z_i/T_i$, for every $i\in I$.
For an intermediate set $I\subseteq {\widehat I}\subseteq J$, let $N_{{\widehat I}}$ be the closed normal subgroup of $G$ generated by the subgroups $T_j$, $j\in J\setminus {\widehat I}$, and set $G_{{\widehat I}}=G/N_{{\widehat I}}$.

We make the following assumption:

\smallskip

(A) \quad 
The images of $\lam_i(T_i)$, $i\in I$, generate $G_{{\widehat I}}$.

\smallskip

For each $j\in J$ we fix a pro-$p$ group epimorphism 
\[
\bar\pi_j\colon T_j\to R^+,
\]
and define a pro-$p$ group homomorphism $\pi_j\colon T_j\to\dbU_n$ by 
\begin{equation}
\label{images of pi j}
\pi_j(\tau)_{lk}=\begin{cases} 1,& \hbox{if } l=k, \\
\bar\pi_j(\tau),&\hbox{if }j=i_l, \ k=l+1,\\
0,&\hbox{otherwise}.
\end{cases}
\end{equation}

Then we have a linking structure
\begin{equation}
\label{G hat I I}
\calG_{{\widehat I},I}=(G_{{\widehat I}},\dbU_n,Z_i,T_i,\res_i,\pi_i)_{i\in I},
\end{equation}
We further set $\calG_I=\calG_{I,I}$.
Note that if ${\widehat I}'\subseteq {\widehat I}$, then the images of $\lam_i(T_i)$, $i\in {\widehat I}'\cap I$, generate $G_{{\widehat I}'}$, so we also have a linking structure $\calG_{{\widehat I}',{\widehat I}'\cap I}$.

\begin{lem}
\label{surjectivity}
Every globalization $\rho$ for $\calG_{{\widehat I},I}$ is surjective.
\end{lem}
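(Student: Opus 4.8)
The plan is to deduce surjectivity of $\rho$ from the generation statements already available: Remark \ref{GI generated by Ti} on the generators of $G_I$, and Lemma \ref{elementary matrices} on the generators of $\dbU_n$. First I would observe that, since $\rho$ is a globalization for $\calG_I$, the defining commutative square (with $\lam_i=\res_i$) gives $\rho\circ(\res_i|_{T_i})=\pi_i$ for every $i\in I$; hence $\rho(\res_i(T_i))=\pi_i(T_i)$.

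Next I would pin down $\pi_i(T_i)$. By the explicit formula \eqref{images of pi j} and the fact that the map $\bar\pi_i\colon T_i\to R^+$ is an \emph{epimorphism}, for the index $i=i_l$ the image $\pi_i(T_i)$ is precisely the elementary subgroup $\Id_{\dbU_n}+RE_{l,l+1}$, which by Lemma \ref{elementary matrices}(a) is isomorphic to $R^+$. In particular $\Id_{\dbU_n}+E_{l,l+1}\in\rho(\res_i(T_i))\subseteq\rho(G_I)$ for each $l=1\nek n$. It is exactly at this point that the surjectivity hypothesis on the $\bar\pi_j$ in the construction of $\calG_I$ is used, so that the image contains the unit elementary matrix and not merely some proper subgroup.

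Finally, by Remark \ref{GI generated by Ti} the pro-$p$ group $G_I$ is (topologically) generated by the images $\res_i(T_i)$, $i\in I$, so $\rho(G_I)$ is a closed subgroup of $\dbU_n$ topologically generated by the subgroups $\rho(\res_i(T_i))$, $i\in I$. Since $\rho(G_I)$ therefore contains all the matrices $\Id_{\dbU_n}+E_{k,k+1}$, $k=1\nek n$, Lemma \ref{elementary matrices}(b) forces $\rho(G_I)=\dbU_n$, i.e.\ $\rho$ is surjective. There is no real obstacle here; the only mild subtlety is the bookkeeping that each $\rho(\res_i(T_i))$ really fills out the full one-parameter elementary subgroup at entry $(l,l+1)$ (via surjectivity of $\bar\pi_i$), together with the harmless remark that "generate" is meant topologically when $R$ is infinite, so that closedness of $\rho(G_I)$ is what lets us conclude.
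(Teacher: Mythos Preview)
Your proof is correct and follows essentially the same approach as the paper: compute $\rho(\res_i(T_i))=\pi_i(T_i)=\Id_{\dbU_n}+RE_{l,l+1}$ using \eqref{images of pi j} and the surjectivity of $\bar\pi_i$, then invoke Lemma \ref{elementary matrices}(b). Your added remarks about Remark \ref{GI generated by Ti} and topological generation are reasonable elaborations but do not change the strategy.
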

\begin{proof}
For every $i=i_l\in I$ we have $\rho(\res_i(T_i))=\pi_i(T_i)=\Id_{\dbU_n}+RE_{l,l+1}$, by (\ref{images of pi j}) and the surjectivity of $\bar\pi_i$.
Now apply Lemma \ref{elementary matrices}(b).
\end{proof}

First we examine globalizations for $\calG_{{\widehat I},I}$ in the special case $|{\widehat I}|=2$, $|I|=1$. 

\begin{prop}
\label{equivalent conditions for isolated}
Let $i_1,i_2\in J$, $i_1\neq i_2$.
The following conditions are equivalent:
\begin{enumerate}
\item[(a)]
There is a globalization $\rho$ for $\calG_{\{i_1,i_2\},\{i_1\}}$;
\item[(b)]
There is a pro-$p$ group epimorphism $\rho_0\colon G\to R^+$ which is $\bar\pi_{i_1}$ on $T_{i_1}$, and is trivial on every $T_j$ with $j\in J$, $j\neq i_1,i_2$.
\end{enumerate}
\end{prop}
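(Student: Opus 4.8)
The plan is to unwind the definitions of a globalization and of the linking structure $\calG_{\{i\}}$ in the case $n=1$, and to observe that in this case $\dbU_1\cong R^+$ via $\pr_{1,2}$, so a globalization is essentially a homomorphism into $R^+$; the content is then the comparison of the source group $G_{\{i\}}=G/N_{\{i\}}$ with $G$ itself.

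\medskip

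For (a)$\Rightarrow$(b): suppose $\rho\colon G_{\{i\}}\to\dbU_1$ is a globalization. Composing with the restriction $\res_i$ and using the identification $\dbU_1\cong R^+$, set $\rho_0'=\pr_{1,2}\circ\rho\colon G_{\{i\}}\to R^+$ and let $\rho_0$ be the composite $G\twoheadrightarrow G_{\{i\}}\xrightarrow{\rho_0'}R^+$. By the defining commutative square of a globalization, $\rho_0$ restricted to $T_i$ equals $\pr_{1,2}\circ\pi_i$, which by (\ref{images of pi j}) (with $n=1$, $l=1$) is exactly $\bar\pi_i$. For $j\in J$ with $j\neq i$ we have $T_j\subseteq N_{\{i\}}$ by the definition of $N_{\{i\}}$, so $T_j$ maps to the identity in $G_{\{i\}}$, hence $\rho_0|_{T_j}$ is trivial. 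Finally $\rho_0$ is an epimorphism: $\rho$ is surjective by Lemma \ref{surjectivity}, and $\pr_{1,2}\colon\dbU_1\to R^+$ is an isomorphism, so $\rho_0'$ is onto, hence so is $\rho_0$. (Alternatively, surjectivity is immediate since $\bar\pi_i$ is already onto $R^+$.)

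\medskip

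For (b)$\Rightarrow$(a): given $\rho_0\colon G\to R^+$ as in (b), it kills every $T_j$ with $j\neq i$, hence kills the closed normal subgroup $N_{\{i\}}$ they generate, so it factors through a pro-$p$ homomorphism $\bar\rho_0\colon G_{\{i\}}\to R^+$. Composing with the inverse of the isomorphism $\pr_{1,2}\colon\dbU_1\xrightarrow{\sim}R^+$ gives $\rho\colon G_{\{i\}}\to\dbU_1=\dbU_n$ (with $n=1$). It remains to check the globalization square for the single index $i$: on $T_i$ we have, entrywise by (\ref{images of pi j}), that $\rho\circ\res_i$ and $\pi_i$ both have $1$ on the diagonal and $\rho_0|_{T_i}=\bar\pi_i$ in the $(1,2)$-entry, so they agree; thus $\rho$ is a globalization for $\calG_{\{i\}}$.

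\medskip

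None of the steps is a genuine obstacle; the only point requiring a little care is the bookkeeping identification $\dbU_1\cong R^+$ via the $(1,2)$-entry projection (Examples \ref{special projections are homomorphisms}(3)) together with checking that the matrix equation $\rho\circ\res_i=\pi_i$ reduces, in the $n=1$ case, to the single scalar equation $\rho_0|_{T_i}=\bar\pi_i$ plus the automatic agreement on the diagonal. I would also note in passing that the equivalence makes transparent the slogan from the introduction that ``for $n=1$ the globalization is essentially a Kummer homomorphism.''
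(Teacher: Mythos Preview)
Your proof is correct and follows essentially the same route as the paper: in both directions you use the identification $\dbU_1\cong R^+$ via $\pr_{1,2}$, compose with the projection $G\twoheadrightarrow G_{\{i\}}$ for (a)$\Rightarrow$(b), and factor through $G_{\{i\}}$ for (b)$\Rightarrow$(a). Your write-up is in fact slightly more explicit than the paper's (you spell out why $\rho_0|_{T_j}$ is trivial for $j\neq i$ and why $\rho_0$ is onto), but the argument is the same.
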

\begin{proof}
(a) $\Rightarrow$(b): \quad
We take
\[
\rho_0=\rho\circ\res\colon G\to \dbU_1\isom R^+,
\]
where $\res$ is the restriction to $G_{\{i_1,i_2\}}$.

\medskip
(b)$\Rightarrow$(a): \quad
For $\rho_0\colon G\to R^+$ as in (b), consider the pro-$p$ homomorphism
\[
\hat\rho\colon G\to\dbU_1, \quad \sig\mapsto \Id+\rho_0(\sig)E_{12}.
\]
It is trivial on $T_j$ for every $j\in J$, $j\neq i_1,i_2$.
Hence it factors through a pro-$p$ group homomorphism $\rho\colon G_{\{i_1,i_2\}}\to\dbU_1$.
By (\ref{images of pi j}), $\rho\circ\res_{i_1}=\pi_{i_1}$ on $T_{i_1}$.
\end{proof}

\subsection{Globalizations in fiber products}
In our field-theoretic applications, we will construct globalizations for the linking structures $\calG_{{\widehat I},I}$ inductively using the fiber product construction of Lemma \ref{fiber product lemma}, as follows.

We write $I=\{i_1\nek i_n\}$, ${\widehat I}=\{i_1\nek i_n,i_{n+1}\nek i_N\}$, where $2\leq n\leq N=|{\widehat I}|$.
Let $1\leq m_1,m_2\leq n$ satisfy $m_1+m_2\geq n$ and denote $t=m_1+m_2-n$.
Consider the subsets 
\[
{\widehat I}_1=\{i_1\nek i_{m_1},i_{n+1}\nek i_N\}, \  {\widehat I}_2=\{i_{n-m_2+1}\nek i_n,i_{n+1}\nek i_N\},
\]
\[
{\widehat I}_{12}={\widehat I}_1\cap {\widehat I}_2=\{i_{n-m_2+1}\nek i_{m_1},i_{n+1}\nek i_N\}
\]
of $\widehat I$.
Thus $|{\widehat I}_1\cap I|=m_1$, $|{\widehat I}_2\cap I|=m_2$ and $|{\widehat I}_{12}\cap I|=t$.

Let 
\[
p'_{n,m_1}\colon\dbU_n\to\dbU_{m_1}, \ 
p''_{n,m_2}\colon\dbU_n\to\dbU_{m_2}, \
p'_{m_2,t}\colon\dbU_{m_2}\to\dbU_t, \   
p''_{m_1,t}\colon \dbU_{m_1}\to\dbU_t, 
\]
be the projection homomorphisms as in Section \ref{section on unitriangular matrices}.
We obtain linking structures 
\[
\begin{split}
\calG_{{\widehat I}_1,{\widehat I}_1\cap I}&=(G_{{\widehat I}_1},\dbU_{m_1},Z_i,T_i,\res_i,p'_{n,m_1}\circ\pi_i)_{i\in {\widehat I}_1\cap I},
\\
\calG_{{\widehat I}_2,{\widehat I}_2\cap I}&=(G_{{\widehat I}_2},\dbU_{m_2},Z_i,T_i,\res_i,p''_{n,m_2}\circ\pi_i)_{i\in {\widehat I}_2\cap I},
\\
\calG_{{\widehat I}_{12},{\widehat I}_{12}\cap I}&=(G_{{\widehat I}_{12}},\dbU_t,Z_i,T_i,\res_i,p''_{m_1,t}\circ p'_{n,m_1}\circ\pi_i)_{i\in {\widehat I}_{12}\cap I},
\end{split}
\]
where $\res_i$ denotes the restriction map from $Z_i$ to the relevant group.
Let 
\[
V=\Ker(p'_{n,m_1})\cap\Ker(p''_{n,m_2})\subseteq\dbU_n.
\]
 
\begin{thm}
\label{main lifting theorem}
Let $\rho_1\colon G_{{\widehat I}_1}\to\dbU_{m_1}$ and $\rho_2\colon G_{{\widehat I}_2}\to\dbU_{m_2}$ be globalizations for $\calG_{{\widehat I}_1,{\widehat I}_1\cap I}$, $\calG_{{\widehat I}_2,{\widehat I}_2\cap I}$, respectively.
Then:
\begin{enumerate}
\item[(a)]
There is a globalization 
\[
\bar\rho\colon G_{{\widehat I}}\to\dbU_{m_1}\times_{\dbU_t}\dbU_{m_2}
\]
for $\calG_{{\widehat I},I}/V$ which lifts both $\rho_1$ and $\rho_2$ with respect to the projection maps;
\item[(b)]
If in addition $\calG_{{\widehat I},I}$ has a $V$-link type and $\rho_1$, $\rho_2$ are trivial on $\res_i(\bar Z_i)$ for all $i\in I$, then there is a surjective globalization $\rho\colon G_{{\widehat I}}\to\dbU_n$ for $\calG_{{\widehat I},I}$ which lifts both $\rho_1$ and $\rho_2$ and such that $\rho(\res_i(\bar Z_i))\in V$ for all $i\in I$.
\end{enumerate}
\end{thm}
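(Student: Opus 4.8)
The plan is to combine Lemma \ref{fiber product lemma} with Proposition \ref{lifting construction}. For part (a), I would begin with the homomorphism
\[
q\colon\dbU_n\to\dbU_{m_1}\times_{\dbU_t}\dbU_{m_2}, \quad M\mapsto(p'_{n,m_1}(M),p''_{n,m_2}(M)),
\]
which by Lemma \ref{fiber product lemma} identifies $\dbU_n/V$ with the fiber product. Composing the given $\pi_i$ with $q$ then produces the data of the quotient linking structure $\calG_I/V$ (one checks that $q\circ\pi_i$ agrees with $p'_{n,m_1}\circ\pi_i$ in the first coordinate when $i\in I_1$ and with $p''_{n,m_2}\circ\pi_i$ in the second when $i\in I_2$, which is exactly how $I_1$ and $I_2$ were chosen to index the relevant rows and columns). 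Now I want to build $\bar\rho$ on $G_I$. The point is that the maps $\rho_1\colon G_{I_1}\to\dbU_{m_1}$ and $\rho_2\colon G_{I_2}\to\dbU_{m_2}$, precomposed with the natural surjections $G_I\to G_{I_1}$ and $G_I\to G_{I_2}$ (which exist because $N_{I_1}\supseteq N_I$ and $N_{I_2}\supseteq N_I$, as $I\subseteq I_1\cup I_2$ forces $J\setminus I_1, J\setminus I_2\subseteq J\setminus I$ — wait, rather $I_1,I_2\subseteq I$ so $J\setminus I\subseteq J\setminus I_1$, hence $N_I\subseteq N_{I_1}$ and the map $G_I\to G_{I_1}$ exists), yield two homomorphisms $G_I\to\dbU_{m_1}$ and $G_I\to\dbU_{m_2}$ whose images agree after projecting to $\dbU_t$; indeed on each generator $\res_i(T_i)$ this compatibility is the identity $p''_{m_1,t}\circ p'_{n,m_1}\circ\pi_i=p'_{m_2,t}\circ p''_{n,m_2}\circ\pi_i$, and these generate $G_I$ by Remark \ref{GI generated by Ti}. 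Their pairing is then the desired $\bar\rho\colon G_I\to\dbU_{m_1}\times_{\dbU_t}\dbU_{m_2}$, and the commuting-square condition for $\calG_I/V$ follows from the corresponding conditions for $\calG_{I_1}$ and $\calG_{I_2}$.

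For part (b), the hypothesis that $\rho_1$ and $\rho_2$ are trivial on $\res_i(\bar Z_i)$ for all $i\in I$ translates, under the identification of the fiber product with $\dbU_n/V$, into the statement that $\bar\rho$ maps each $\res_i(\bar Z_i)$ into $V$, i.e. $\bar\rho$ is trivial on $\res_i(\bar Z_i)$ viewed as a globalization for $(\calG_I)/V$. Since by assumption $\calG_I$ has a $V$-link type, Proposition \ref{lifting construction}(b) applies and gives a unique lift $\rho\colon G_I\to\dbU_n$ which is a globalization for $\calG_I$ and which maps each $\res_i(\bar Z_i)$ into $V$. Surjectivity of $\rho$ is then immediate from Lemma \ref{surjectivity}. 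It remains to check that $\rho$ lifts $\rho_1$ and $\rho_2$: since $p'_{n,m_1}\circ\rho$ and $\rho_1$ (pulled back to $G_I$) are both globalizations for $\calG_{I_1}$ — the former because $p'_{n,m_1}\circ q = p'_{n,m_1}$ and $\rho$ lifts $\bar\rho$ — they coincide by the uniqueness of globalizations (Remark \ref{uniqueness}), and similarly for $\rho_2$.

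The main obstacle I anticipate is the bookkeeping in matching up the index sets and the row/column ranges: one must verify carefully that the subsets $I_1=\{i_1\nek i_{m_1}\}$ and $I_2=\{i_{n-m_2+1}\nek i_n\}$ are the precise sets for which the compositions $p'_{n,m_1}\circ\pi_i$ and $p''_{n,m_2}\circ\pi_i$ land in the off-diagonal subgroups $\Id+RE_{l,l+1}$ of $\dbU_{m_1}$ and $\dbU_{m_2}$ at the correct positions, so that $\calG_{I_1}$, $\calG_{I_2}$, $\calG_{I_{12}}$ are genuinely the linking structures built from the data of $\calG_I$ restricted to those sets. This is where the convexity constraints of Lemma \ref{ttt} and the explicit description of $p'_{n,m_1}$, $p''_{n,m_2}$ as projections onto the upper-left and lower-right submatrices must be invoked, together with the fact (from the proof of Lemma \ref{fiber product lemma}) that the common image in $\dbU_t$ is the projection onto rows and columns $n-m_2+1\nek m_1$. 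Once these identifications are in place, the rest is a formal application of the two quoted results.
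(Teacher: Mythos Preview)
Your proposal is correct and follows essentially the same strategy as the paper: pull $\rho_1,\rho_2$ back to $G_I$, verify they agree over $\dbU_t$, pair them via Lemma \ref{fiber product lemma}, and then for (b) invoke Proposition \ref{lifting construction}(b) and Lemma \ref{surjectivity}. The one stylistic difference is in how the compatibility $p''_{m_1,t}\circ\hat\rho_1=p'_{m_2,t}\circ\hat\rho_2$ is established: you check it directly on the generators $\res_i(T_i)$ using the matrix identity $p''_{m_1,t}\circ p'_{n,m_1}=p'_{m_2,t}\circ p''_{n,m_2}$, whereas the paper factors both sides through $G_{I_{12}}$ (observing that each induces a globalization for $\calG_{I_{12}}$) and appeals to the uniqueness of globalizations (Remark \ref{uniqueness}) there. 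Both arguments come to the same thing.

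Two small points worth tightening. First, when you check compatibility on $\res_i(T_i)$ for $i\in I\setminus I_1$, you should say explicitly that $\hat\rho_1$ is trivial there (since $T_i\leq N_{I_1}$) and that $p'_{n,m_1}\circ\pi_i$ is also trivial (since the relevant superdiagonal entry falls outside the upper-left block); the formula you wrote is literally correct but the reason differs from the $i\in I_1$ case. Second, in your last paragraph you call $p'_{n,m_1}\circ\rho$ and the pullback of $\rho_1$ ``globalizations for $\calG_{I_1}$'', but these are maps out of $G_I$, not $G_{I_1}$; the lifting of $\rho_1$ already follows immediately from $\rho$ lifting $\bar\rho$ together with the commuting squares, so this extra uniqueness argument is unnecessary.
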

\begin{proof}
(a) \quad
By (\ref{images of pi j}), the homomorphism $p''_{m_1,t}\circ\rho_1\colon G_{{\widehat I}_1}\to\dbU_t$ is trivial on the images of $T_1\nek T_{n-m_2}$, 
and the homomorphism  $p'_{m_2,t}\circ\rho_2\colon G_{{\widehat I}_2}\to\dbU_t$ is trivial on the images of $T_{m_1+1}\nek T_n$.
Hence they induce globalizations $\rho'_1\colon G_{{\widehat I}_{12}}\to\dbU_t$, $\rho'_2\colon G_{{\widehat I}_{12}}\to\dbU_t$, respectively, for $\calG_{{\widehat I}_{12},{\widehat I}_{12}\cap I}$ (see diagram below).
By Remark \ref{uniqueness}, $\calG_{{\widehat I}_{12},{\widehat I}_{12}\cap I}$ has at most one globalization, so $\rho'_1=\rho'_2$.

Let $\hat\rho_1\colon G_{{\widehat I}}\to\dbU_{m_1}$ and $\hat\rho_2\colon G_{{\widehat I}}\to\dbU_{m_2}$ be the homomorphisms induced by $\rho_1$, $\rho_2$, respectively, so that the following diagrams commute:
\[
\xymatrix{
G_{{\widehat I}}\ar[d]_{\res}\ar[dr]^{\hat\rho_1}&&&G_{{\widehat I}}\ar[d]_{\res}\ar[dr]^{\hat\rho_2}&\\
G_{{\widehat I}_1}\ar[d]_{\res}\ar[r]^{\rho_1}&\dbU_{m_1}\ar[d]^{p''_{m_1,t}}&&G_{{\widehat I}_2}\ar[d]_{\res}\ar[r]^{\rho_2}&\dbU_{m_2}\ar[d]^{p'_{m_2,t}}\\
G_{{\widehat I}_{12}}\ar[r]^{\rho'_1}&\dbU_t,&&G_{{\widehat I}_{12}}\ar[r]^{\rho'_2}&\dbU_t.
}
\]
As $\rho'_1=\rho'_2$, we have $p''_{m_1,t}\circ\hat\rho_1=p'_{m_2,t}\circ\hat\rho_2$.
Using  Lemma \ref{fiber product lemma} we conclude that $\hat\rho_1$, $\hat\rho_2$ combine to a homomorphism 
\[
\bar\rho=(\hat\rho_1,\hat\rho_2)\colon G_{{\widehat I}}\to\dbU_{m_1}\times_{\dbU_t}\dbU_{m_2}=\dbU_n/V.
\]
There are commutative squares
\[
\xymatrix{
G_{{\widehat I}}\ar[d]_{\res}\ar[r]^{\bar\rho}&\dbU_n/V\ar[d]^{p'_{n,m_1}}&&G_{{\widehat I}}\ar[d]_{\res}\ar[r]^{\bar\rho}&\dbU_n/V\ar[d]^{p''_{n,m_2}}\\
G_{{\widehat I}_1}\ar[r]^{\rho_1}&\dbU_{m_1},&&G_{{\widehat I}_1}\ar[r]^{\rho_2}&\dbU_{m_2}.
}
\]
Hence $\bar\rho$ is a globalization for $\calG_{{\widehat I},I}/V$.

\medskip

(b) \quad
By the assumptions, $\bar\rho$ is trivial on each subgroup $\res_i(\bar Z_i)$, $i\in I$.
Proposition \ref{lifting construction}(b) yields a globalization $\rho\colon G_{{\widehat I}}\to\dbU_n$ for $\calG_{{\widehat I},I}$ which lifts $\bar\rho$ and such that $\rho(\res_i(\bar Z_i))\in V$ for every $i\in I$.
In particular, $\rho$ lifts $\rho_1$ and $\rho_2$.
By Lemma \ref{surjectivity}, $\rho$ is surjective.
\end{proof}

We will apply Theorem \ref{main lifting theorem} in the following form:

\begin{cor}
\label{lifting for Vn}
Assume that $N=n+1$, 
\[
I=\{i_1\nek i_n\}, \quad {\widehat I}_1=\{i_1\nek i_{n-1},i_{n+1}\}, \quad {\widehat I}_2=\{i_2\nek i_n,i_{n+1}\},
\]
and suppose that $\calG_{{\widehat I},I}$ has a $\dbV_n$-link type.
Let $\rho_1\colon G_{{\widehat I}_1}\to\dbU_{n-1}$ and $\rho_2\colon G_{{\widehat I}_2}\to\dbU_{n-1}$ be globalizations for $\calG_{{\widehat I}_1,{\widehat I}_1\cap I}$, $\calG_{{\widehat I}_2,{\widehat I}_2\cap I}$, respectively, which are trivial on $\res_i(\bar Z_i)$ for all $i\in I$.
Then there is a surjective globalization $\rho\colon G_{{\widehat I}}\to\dbU_n$ for $\calG_{{\widehat I},I}$ which lifts both $\rho_1$ and $\rho_2$, and such that $\rho(\res_i(\bar Z_i))\in \dbV_n$ for all $i\in I$.
\end{cor}

\subsection{Linking invariants}
\label{subsection on linking invariants}
Take $j\in J\setminus {\widehat I}$.
Let $\rho$ be a globalization for $\calG_{{\widehat I},I}$ which maps the image of $Z_j$ in $G_{{\widehat I}}$ into $\dbV_n$, where $n=|I|$.
Recall that $\pr_{1,n+1}\colon\dbV_n\to R^+$ is an isomorphism (Example \ref{Sr}(2)).

\begin{defin}
\label{linking invariant}
\rm 
The \textsl{linking invariant} of $({\widehat I},I)$ at $j$ is the pro-$p$ group homomorphism
\[
[{\widehat I},I,j]:=\pr_{1,n+1}\circ\rho\circ\res_j\colon Z_j\to R^+,
\]
where $\res_j\colon Z_j\to G_{{\widehat I}}$ is the restriction homomorphism.
\end{defin}
Since the image of $T_j$ in $G_{{\widehat I}}$ is trivial, $[{\widehat I},I,j]$ factors via a pro-$p$ group homomorphism
\[
\pr_{1,n+1}\circ\rho\circ\res_j\colon \bar Z_j=Z_j/T_j\to R^+,
\]
and whenever convenient we identify these two maps.

\section{Linking Invariants and Magnus Theory}
\label{section on linking invariants and Magnus theory}
Morishita's arithmetic Milnor invariants are defined using coefficients in the Magnus homomorphism (described below).
We now explain the connection between our linking invariant $[{\widehat I},I,j]$ and the Magnus homomorphism.

\subsection{Magnus theory}
Let $R$ be a pro-$p$ unital ring and let $X=\{x_j\}_{j\in J}$ be a set of variables.
For every word $I=(i_1\cdots i_n)$ in the alphabet $J$ we write $x_I=x_{i_1}\cdots x_{i_n}$.
The \textsl{Magnus algebra} $R\langle\langle X\rangle\rangle$ is the $R$-algebra of formal power series in the non-commuting variables $x_j$, $j\in J$, and with coefficients in $R$ (see \cite{Serre02}*{Ch.\ I, Sect.\ 1.4}).
We write its elements as $\sum_Ic_Ix_I$ with $c_I\in R$.
Let $R\langle\langle X\rangle\rangle^{\times,1}$ be its group of $1$-units, that is, series with $c_\emptyset=1$.
It is a pro-$p$ group.

Assume further that for every $j\in J$ we have an isomorphic copy $T_j$ of $\dbZ_p$ with a generator $\tau_j$.
Consider the free pro-$p$ product $\star_{j\in J}T_j$.
It is a free pro-$p$ group on the basis $\tau_j$, $j\in J$.
The \textsl{Magnus homomorphism} 
\[
\Lam\colon \star_{j\in J}T_j\to R\langle\langle X\rangle\rangle^{\times,1}
\]
is defined on the free generators by $\Lam(\tau_j)=1+x_j$.
For an arbitrary $\tau$ in this free product we write 
\[
\Lam(\tau)=\sum_I\eps_I(\tau)x_I,
\]
where $I$ ranges over all finite words in the alphabet $J$ and $\eps_I(\tau)\in R$.

\subsection{The linking invariant as a Magnus coefficient}
Suppose that $R$ is a pro-$p$-cyclic ring, i.e., $R=\dbZ_p$ or $R=\dbZ/q$ for a $p$-power $q$.
Let $G$, $Z_j$, $T_j$, $j\in J$ be as in Section \ref{section on the structures GI}.
We assume that $T_j\isom\dbZ_p$ with a generator $\tau_j$ as before, so $\hat T=\star_{j\in J}T_j$ is a free pro-$p$ group.

Let $I=\{i_1\nek i_n\}$ be a subset of $J$ of size $n$, considered also as a word in the alphabet $J$, and let $\dbU_n=\dbU_n(R)$.
For $j\in J$ we take the epimorphism 
\[
\bar\pi_j\colon T_j\to R^+, \quad \tau_j\mapsto 1,
\]
and define $\pi_j\colon T_j\to\dbU_n$ as in (\ref{images of pi j}).

\begin{lem}
\label{Magnus matrix}
For every $j\in J$ and $\tau\in T_j$ one has 
\[
\pi_j(\tau)=\Bigl[\eps_{(i_l\cdots i_{k-1})}(\tau)\Bigr]_{1\leq l\leq k\leq n+1}.
\]
\end{lem}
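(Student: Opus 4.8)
The statement is a compatibility between the combinatorial matrix $\pi_j(\tau)$ defined by (\ref{images of pi j}) and the Magnus coefficients $\eps_{(i_l\cdots i_{k-1})}(\tau)$ of the image of $\tau$ under the Magnus homomorphism $\Lam$. Since $\tau\in T_j\isom\dbZ_p$ and $T_j$ is topologically generated by $\tau_j$, both sides of the claimed identity are continuous homomorphisms $T_j\to\dbU_n$ (the left side by construction, the right side because $\Lam$ is a homomorphism into $R\langle\langle X\rangle\rangle^{\times,1}$ and the truncation to words of length $\leq n$ in the single letter-pattern $(i_l\cdots i_{k-1})$ is a homomorphic image). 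So it suffices to verify the identity on the generator $\tau=\tau_j$, i.e.\ to compute $\Lam(\tau_j)=1+x_j$ and read off $\eps_{(i_l\cdots i_{k-1})}(\tau_j)$, then compare with (\ref{images of pi j}).

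\emph{Step 1: compute the Magnus coefficients on the generator.} We have $\Lam(\tau_j)=1+x_j$, so $\eps_{(\,)}(\tau_j)=1$, $\eps_{(j)}(\tau_j)=1$, and $\eps_I(\tau_j)=0$ for every word $I$ of length $\geq 2$ or for any word of length $1$ other than $(j)$. In terms of the matrix $\bigl[\eps_{(i_l\cdots i_{k-1})}(\tau_j)\bigr]_{1\leq l\leq k\leq n+1}$: the diagonal entries $k=l$ correspond to the empty word, hence equal $1$; the superdiagonal entries $k=l+1$ correspond to the one-letter word $(i_l)$, hence equal $1$ when $i_l=j$ and $0$ otherwise; all entries with $k\geq l+2$ correspond to words of length $\geq 2$, hence vanish. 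Comparing with (\ref{images of pi j}) evaluated at $\bar\pi_j(\tau_j)=1$, this is exactly $\pi_j(\tau_j)$.

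\emph{Step 2: both sides are homomorphisms, hence agree on all of $T_j$.} The map $\tau\mapsto\pi_j(\tau)$ is a pro-$p$ group homomorphism by construction (Section \ref{section on the structures GI}). For the right-hand side, consider the map $T_j\to\dbU_n$ sending $\tau$ to $M(\tau):=\bigl[\eps_{(i_l\cdots i_{k-1})}(\tau)\bigr]_{1\leq l\leq k\leq n+1}$. Since every word appearing, $(i_l\,i_{l+1}\cdots i_{k-1})$, is a prefix-compatible family (the $(l,k)$ word extended on the right by one letter gives a word of the $(l,k+1)$ type only when the new letter is $i_k$), the multiplicativity of $\Lam$ — i.e.\ $\eps_I(\sig\tau)=\sum_{I=I'I''}\eps_{I'}(\sig)\eps_{I''}(\tau)$ — restricted to words built from the consecutive block $i_l,i_{l+1},\dots$ yields precisely the $\dbU_n$-multiplication rule $M(\sig\tau)_{lk}=\sum_{l\leq r\leq k}M(\sig)_{lr}M(\tau)_{rk}$. (This is the standard fact that the Magnus homomorphism followed by the truncation of $R\langle\langle X\rangle\rangle$ to the quotient by the ideal generated by all degree-$\geq 2$ monomials \emph{other} than the ``staircase'' words $i_l\cdots i_{k-1}$ is a homomorphism to $\dbU_n$; it underlies the classical description of $\dbU_n$ via Magnus expansions, cf.\ Lemma~\ref{elementary matrices}.) Thus $M$ is a continuous homomorphism $T_j\to\dbU_n$. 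Two continuous homomorphisms from the procyclic group $T_j$ that agree on the topological generator $\tau_j$ are equal, so $\pi_j(\tau)=M(\tau)$ for all $\tau\in T_j$.

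\emph{Main obstacle.} The only genuine point to pin down carefully is the claim in Step 2 that $\tau\mapsto M(\tau)$ is multiplicative, i.e.\ that the ``staircase'' words close up correctly under concatenation. This is where one must check that when $\Lam(\sig\tau)=\Lam(\sig)\Lam(\tau)$ is expanded, the coefficient of the word $i_l i_{l+1}\cdots i_{k-1}$ picks up exactly the contributions $\eps_{(i_l\cdots i_{r-1})}(\sig)\,\eps_{(i_r\cdots i_{k-1})}(\tau)$ for $l\leq r\leq k$, matching the $\dbU_n$ product — all other factorizations of that word being ruled out because the letters in the block $i_l,i_{l+1},\dots,i_{k-1}$ are pinned in position. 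Once this bookkeeping is done, the procyclicity argument closes the proof immediately; the base case (Step 1) is a one-line computation.
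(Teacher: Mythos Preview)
Your proof is correct and follows essentially the same approach as the paper: verify the identity on the generator $\tau_j$ by direct computation of $\Lam(\tau_j)=1+x_j$, then extend to all of $T_j$ by showing both sides are continuous homomorphisms. The only difference is that the paper cites \cite{Efrat14}*{Lemma 7.5} for the fact that $\tau\mapsto\bigl[\eps_{(i_l\cdots i_{k-1})}(\tau)\bigr]$ is a group homomorphism (indeed on all of $\hat T$), whereas you supply the argument directly via the convolution identity $\eps_I(\sig\tau)=\sum_{I=I'I''}\eps_{I'}(\sig)\eps_{I''}(\tau)$. Your remark about ``other factorizations being ruled out'' is unnecessary: every factorization of the word $(i_l\cdots i_{k-1})$ as a concatenation is already of the form $(i_l\cdots i_{r-1})(i_r\cdots i_{k-1})$, so the sum matches the $\dbU_n$-product formula on the nose with nothing left over.
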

\begin{proof}
As $\Lam(\tau_j)=1+x_j$, for every $1\leq l\leq k\leq n+1$ we have 
\[
\eps_{(i_l\cdots i_{k-1})}(\tau_j)=\begin{cases}1,&\hbox{if }l=k,\\1,&\hbox{if }j=i_l,\ k=l+1,\\ 0,& \hbox{otherwise.}\end{cases}
\]
When compared to (\ref{images of pi j}), this gives the asserted equality for $\tau=\tau_j$.
By \cite{Efrat14}*{Lemma 7.5}, the right-hand side is a group homomorphism $\hat T\to\dbU_n$, so the equality holds for every $\tau\in T_j$.
\end{proof}

Let $\calG_{{\widehat I},I}$ be as in (\ref{G hat I I}) and let $\rho\colon G_{{\widehat I}}\to\dbU_n$ be a globalization for $\calG_{{\widehat I},I}$.
Then $\rho\circ\res_i=\pi_i$ on $T_i$ for $i\in I$, so by Lemma \ref{Magnus matrix}, $\pr_{1,n+1}\circ\rho\circ\res_i=\eps_I$ on $T_i$.
For the free product epimorphism $\hat\lam_T=\star_{i\in I}\res_i\colon \hat T=\star_{i\in I}T_i\to G_{{\widehat I}}$ (see Subsection \ref{subsection on lifting of globalizations in link type structures}), we therefore have on $\hat T$:
\begin{equation}
\label{eps and globalizations}
\pr_{1,n+1}\circ\rho\circ\hat\lam_T=\eps_I
\end{equation}

\begin{cor}
Let $j\in J\setminus {\widehat I}$, $\sig\in Z_j$, and $\hat\sig\in\hat Z_j$ satisfy $\res_j(\sig)=\hat\lam_T(\hat\sig)$.
Then 
\[
[{\widehat I},I,j](\sig)=\eps_I(\hat\sig).
\]
\end{cor}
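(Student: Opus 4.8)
The plan is to read off the identity directly from equation~(\ref{eps and globalizations}), which already encodes the relationship between the globalization $\rho$ and the Magnus coefficients $\eps_I$. First I would unwind Definition~\ref{linking invariant}: by definition $[I,j](\sig)=(\pr_{1,n+1}\circ\rho\circ\res_j)(\sig)$, so it suffices to prove that $(\pr_{1,n+1}\circ\rho\circ\res_j)(\sig)=\eps_I(\hat\sig)$.

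Next I would invoke the hypothesis $\res_j(\sig)=\hat\lam_T(\hat\sig)$ to rewrite the left-hand side as $(\pr_{1,n+1}\circ\rho\circ\hat\lam_T)(\hat\sig)$; here the hypothesis is to be read as an equality in $G$ (equivalently, after projecting, in $G_I$), and $\rho$ --- viewed as the composite $G\to G_I\to\dbU_n$ --- is then applied to both sides. Finally, equation~(\ref{eps and globalizations}) states that $\pr_{1,n+1}\circ\rho\circ\hat\lam_T=\eps_I$ as functions on $\hat T$, so evaluating at $\hat\sig$ yields $\eps_I(\hat\sig)$, as required.

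I do not expect any real obstacle: the statement is bookkeeping once~(\ref{eps and globalizations}) is in hand, and that equation itself rests on Lemma~\ref{Magnus matrix} (which identifies $\pi_j$ with the matrix of Magnus coefficients $\eps_{(i_l\cdots i_{k-1})}$) together with the fact that $\hat T$ is generated by the subgroups $T_j$, $j\in J$. The one point deserving a moment's attention is that $\pr_{1,n+1}$ is not a homomorphism on all of $\dbU_n$, which is why one passes through~(\ref{eps and globalizations}) rather than arguing entry-wise on $\rho\circ\hat\lam_T$ directly; but since~(\ref{eps and globalizations}) is already available, this causes no trouble.
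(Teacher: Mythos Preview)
Your proposal is correct and follows exactly the same route as the paper: unwind Definition~\ref{linking invariant}, substitute using the hypothesis $\res_j(\sig)=\hat\lam_T(\hat\sig)$, and then apply equation~(\ref{eps and globalizations}). The paper's proof is literally the one-line chain of equalities you describe, and your additional remarks (about reading $\rho$ through $G_I$ and about $\pr_{1,n+1}$ not being a homomorphism on all of $\dbU_n$) are accurate side comments that do not alter the argument.
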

\begin{proof}
We compute using (\ref{eps and globalizations}):
\[
[{\widehat I},I,j](\sig)=(\pr_{1,n+1}\circ\rho\circ\res_j)(\sig)=(\pr_{1,n+1}\circ\rho\circ\hat\lam_T)(\hat\sig)=\eps_I(\hat\sig).
\qedhere
\]
\end{proof}

\section{Valuations and Orderings}
\label{section on valuations and orderings}
In this section, we recall some basic notions and facts on valuations and orderings on fields.
We refer e.g.\ to \cite{Efrat06} for more details and for proofs.

\subsection{Preliminaries on valuations}
For an abelian group $A$ and a positive integer $m$ we write ${}_mA$ and $A/m$ for the kernel and cokernel, respectively, of the map $A\xrightarrow{m}A$ of multiplication by $m$.

Let $F$ be a field, let $F^\times$ be its multiplicative group, and let $\mu(F)$ be the group of all roots of unity in $F$.
We write $\mu_m(F)$ for the group ${}_m(F^\times)$ of all roots of unity in $F$ of order dividing $m$.
For a prime number $p$ let $F(p)$ be the maximal pro-$p$ Galois extension of $F$, and $G_F(p)=\Gal(F(p)/F)$ the maximal pro-$p$ Galois group of $F$.

Recall that a (Krull) valuation on $F$ is a group epimorphism $v\colon F^\times\to\Gam_v$, where $\Gam_v=(\Gam_v,\geq)$ is an ordered abelian group, satisfying the \textsl{ultra-metric inequality} $v(a+b)\geq\min\{v(a),v(b)\}$ for every $a,b\in F^\times$ with $a\neq-b$.
Equivalently, 
\[
O_v=\{a\in F^\times\ |\ v(a)\geq0\}\cup\{0\}
\]
is a valuation ring in $F$.
Its unique maximal ideal is 
\[
\grm_v=\{a\in F^\times\ |\ v(a)>0\}\cup\{0\},
\]
and its residue field is $\bar F_v=O_v/\grm_v$.
We denote the residue of $a\in O_v$ in $\bar F_v$ by $\bar a$. 
Being an ordered abelian group, $\Gam_v$ is torsion-free.
The valuation $v$ is called \textsl{discrete} if $\Gam_v$ is isomorphic as an ordered abelian group to $\dbZ$ with its natural order.
Then an element $t$ of $F^\times$  such that $v(t)$ corresponds to $1\in\dbZ$ is called a \textsl{uniformizer} for $v$.

Valuations $v,v'$ on $F$ are called \textsl{equivalent} if $O_v=O_{v'}$;
Alternatively, there is an isomorphism $\varphi\colon \Gam_v\xrightarrow{\sim}\Gam_{v'}$ of ordered abelian groups such that $v'=\varphi\circ v$ \cite{Efrat06}*{Prop.\ 3.2.3(e)}.
We do not distinguish between equivalent valuations.

For every field extension $E$ of $F$ there is a valuation $u$ on $E$ which extends $v$ \cite{Efrat06}*{Cor.\ 14.1.2}.
Then there is an embedding $\Gam_v\hookrightarrow\Gam_u$ of ordered abelian groups, and a field embedding $\bar F_v\subseteq\bar E_u$.
When $E/F$ is algebraic, $\Gam_u/\Gam_v$ is a torsion group \cite{Efrat06}*{Cor.\ 14.2.3(a)}.

We now fix a positive integer $m$.

\begin{lem}
\label{snake lemma isomorphism}
${}_m(\Gam_u/\Gam_v)\isom\Ker(\Gam_v/m\to\Gam_u/m)$.
\end{lem}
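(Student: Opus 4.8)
The statement is a purely group-theoretic fact about the exact sequence $0\to\Gam_v\to\Gam_u\to\Gam_u/\Gam_v\to0$ of abelian groups, with $\Gam_u/\Gam_v$ torsion. The plan is to apply the snake lemma to the commutative diagram with rows $0\to\Gam_v\to\Gam_u\to\Gam_u/\Gam_v\to0$ (repeated twice), the vertical maps all being multiplication by $m$. Since $\Gam_v\hookrightarrow\Gam_u$ is injective, the left two columns have trivial kernels, so the connecting sequence reads
\[
0\to {}_m\Gam_v \to {}_m\Gam_u \to {}_m(\Gam_u/\Gam_v) \xrightarrow{\partial} \Gam_v/m \to \Gam_u/m.
\]

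Now I would use the additional hypothesis that $\Gam_v$ and $\Gam_u$ are torsion-free (being ordered abelian groups), so ${}_m\Gam_v={}_m\Gam_u=0$. Hence the map ${}_m\Gam_u\to{}_m(\Gam_u/\Gam_v)$ is the zero map out of the trivial group, and exactness at ${}_m(\Gam_u/\Gam_v)$ forces the connecting homomorphism $\partial\colon {}_m(\Gam_u/\Gam_v)\to\Gam_v/m$ to be injective. Exactness of the snake sequence at $\Gam_v/m$ then identifies the image of $\partial$ with $\Ker(\Gam_v/m\to\Gam_u/m)$. Composing, $\partial$ gives the desired isomorphism ${}_m(\Gam_u/\Gam_v)\isom\Ker(\Gam_v/m\to\Gam_u/m)$.

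There is essentially no obstacle here; the only point to be careful about is invoking torsion-freeness of $\Gam_v$ and $\Gam_u$ (already noted in the text as a property of ordered abelian groups) to kill the $m$-torsion subgroups, which is exactly what collapses the long snake sequence into the short identification. One could alternatively give a direct element-theoretic argument: send the class of $\gamma+\Gam_v\in{}_m(\Gam_u/\Gam_v)$, where $m\gamma\in\Gam_v$, to the class of $m\gamma$ in $\Gam_v/m$; check this is well-defined (if $\gamma-\gamma'\in\Gam_v$ then $m\gamma-m\gamma'\in m\Gam_v$), lands in the stated kernel (since $m\gamma=m\cdot\gamma$ is divisible by $m$ in $\Gam_u$), is injective (if $m\gamma=m\delta$ with $\delta\in\Gam_v$ then $\gamma-\delta$ is $m$-torsion in $\Gam_u$, hence $0$, so $\gamma=\delta\in\Gam_v$), and is surjective (given $\beta\in\Gam_v$ with $\beta=m\gamma$ for some $\gamma\in\Gam_u$, the class $\gamma+\Gam_v$ is an $m$-torsion preimage). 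I would present the snake-lemma version as the main proof since it is shortest.

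\begin{proof}
Apply the snake lemma to the diagram whose two rows are both the short exact sequence $0\to\Gam_v\to\Gam_u\to\Gam_u/\Gam_v\to0$, with all vertical maps being multiplication by $m$. Since $\Gam_v\hookrightarrow\Gam_u$ is injective and $\Gam_v,\Gam_u$ are torsion-free (being ordered abelian groups), we have ${}_m\Gam_v={}_m\Gam_u=0$, so the snake sequence becomes
\[
0\to {}_m(\Gam_u/\Gam_v)\xrightarrow{\ \partial\ }\Gam_v/m\to\Gam_u/m.
\]
Thus $\partial$ is injective with image $\Ker(\Gam_v/m\to\Gam_u/m)$, giving the asserted isomorphism.
\end{proof}
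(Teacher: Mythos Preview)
Your proof is correct and follows essentially the same approach as the paper: set up the multiplication-by-$m$ endomorphism of the short exact sequence $0\to\Gam_v\to\Gam_u\to\Gam_u/\Gam_v\to0$, invoke torsion-freeness of $\Gam_u$ to kill the relevant kernel, and read off the isomorphism from the snake lemma. Your alternative element-wise description of the connecting map also matches the explicit formula the paper records immediately after the lemma.
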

\begin{proof} 
The maps of multiplication by $m$ give a commutative diagram with exact rows
\[
\xymatrix{
0\ar[r]&\Gam_v\ar[r]\ar[d]^{m}&\Gam_u\ar[r]\ar@{>->}[d]^{m}&\Gam_u/\Gam_v\ar[r]\ar[d]^{m}&0\\
0\ar[r]&\Gam_v\ar[r]&\Gam_u\ar[r]&\Gam_u/\Gam_v\ar[r]&\ 0.\\
}
\]
The middle vertical map is injective since $\Gam_u$ is torsion-free.
We now apply the snake lemma (Cf.\ \cite{Efrat06}*{Lemma 1.1.4(a)}).
\end{proof}

\begin{rem}
\label{explicit snake lemma isomorphism}
\rm
More explicitly, take $\gam\in\Gam_u$ with $\gam+\Gam_v\in{}_m(\Gam_u/\Gam_v)$.
Then $m\gam\in\Gam_v\cap m\Gam_u$, and the above isomorphism maps $\gam+\Gam_v$ to $m\gam+m\Gam_v$.
\end{rem}

Now let $E$ be a Galois extension of $F$ and let $G=\Gal(E/F)$.
Recall that the decomposition, inertia, and ramification groups of the valued field extension $(E,u)/(F,v)$ are defined by
\[
\begin{split}
Z=Z(u/v)&=\{\sig\in\Gal(E/F)\ |\ u\circ\sig=u\},\\
T=T(u/v)&=\{\sig\in\Gal(E/F)\ |\ \forall a\in O_u:\ u(\sig(a)-a)>0\},\\
V=V(u/v)&=\{\sig\in T(u/v)\ |\ \forall a\in E^\times:\ u(1-\sig(a)/a)>0\},
\end{split}
\]
respectively  \cite{Efrat06}*{Ch.\ 15--16}.
One has $Z\trianglerighteq T\trianglerighteq V$.
The quotient $T/V$ is an abelian profinite group \cite{Efrat06}*{Cor.\ 16.2.7(d)}.
The group $V$ is pro-$p'$, if $p'=\Char\bar F_v>0$, and is trivial if $\Char\bar F_v=0$ \cite{Efrat06}*{Th.\ 16.2.3}.
The quotient $Z/T$ is canonically isomorphic to $\Aut(\bar E_u/\bar F_v)$ \cite{Efrat06}*{Prop.\ 16.1.3(a)(c) and Cor.\ 15.2.3(b)}.

Any other extension of $v$ to $E$ is equivalent to $u\circ\sig$ for some $\sig\in G$ \cite{Efrat06}*{Th.\ 14.3.2}.
Then $Z(u\circ\sig/v)=\sig\inv Z(u/v)\sig$ and $T(u\circ\sig/v)=\sig\inv T(u/v)\sig$ \cite{Efrat06}*{Remarks 15.1.1(b) and 16.1.2(b)}.

\subsection{The tame discrete pro-$p$ case}
\label{subsection on the tame pro-p discrete case}
We now examine the case where $F$ contains a root of unity of order $p$, $E=F(p)$ for $p$ prime, and $\Char\bar F_v\neq p$.
Then $V=1$ and $Z=T\rtimes\bar Z$ for $\bar Z=Z/T\isom G_{\bar F_v}(p)$ \cite{Efrat06}*{Th.\ 16.1.1 and Th.\ 22.1.1}.
Following Iwasawa \cite{Iwasawa55} in the case of local fields, we describe $Z$ under the additional assumption that $v$ is discrete.
For simplicity, we may replace $F$ by the fixed field of $Z$, to assume that $G_F(p)=Z$.
Thus $v$ has a unique extension $u$ to $E=F(p)$, and $\Gam_u=\tfrac1{p^\infty}\Gam_v$ and $\overline{(F(p))}_u=\bar F_v(p)$ \cite{Efrat97}*{Lemma 1.1(b)}.

We have $T=T(u/v)\isom\dbZ_p$ \cite{Efrat06}*{Cor.\ 16.2.7(c) and Prop.\ 22.1.3}.
Denote the fixed field of $T$ by $F^T$.
We also choose a uniformizer $t\in F$ for $v$ and a compatible system $\root{p^r}\of t$ of $p^r$-roots of $t$, where $r\geq1$.
Let 
\[
F^{\bar Z}=F(\root{p^r}\of t\ |\ r\geq1), \quad \bar Z=\Gal(F(p)/F^{\bar Z}).
\]

\begin{prop}
\label{Iwasawa}
In this setup, one has:
\begin{enumerate}
\item[(a)]
$G=T\rtimes\bar Z$.
\item[(b)]
If $A\subseteq F^T\cap O_u^\times$ and the set $\bar A$ of residues of $A$ generates $\bar F_v(p)$ over $\bar F_v$, then  $F(A)=F^T$.
\item[(c)]
The conditions of (b) hold if $\bar F_v$ is finite and $A=\mu_{p^\infty}=\bigcup_{n\geq1}\mu_{p^n}(F(p))$.
\end{enumerate}
\end{prop}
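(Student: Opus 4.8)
The plan is to establish the three parts in order, with part (a) being essentially a restatement of the structural facts already cited and part (b) being the main point.

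For part (a), I would simply recall that we are in the tame discrete pro-$p$ setting with $\mu_p(F)\subseteq F$ and $\Char\bar F_v\neq p$, so the cited results (\cite{Efrat06}*{Th.\ 16.1.1, Th.\ 22.1.1} together with the reduction $G_F(p)=Z$) give $V=1$ and $G=Z=T\rtimes\bar Z$, where the complement $\bar Z$ is realized as $\Gal(F(p)/F^{\bar Z})$ by the choice of the compatible system $\root{p^r}\of t$; indeed $F^{\bar Z}/F$ is the Kummer extension generated by all $p^r$-th roots of the uniformizer $t$, and its Galois group over $F$ maps isomorphically onto $Z/T=\bar Z$ because adjoining roots of a uniformizer is linearly disjoint from the inertia field $F^T$ over $F$.

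For part (b), the plan is to use the description of $F^T$ as the \emph{maximal unramified} pro-$p$ extension of $F$ (inside $F(p)$), whose residue field is $\bar F_v(p)$, i.e.\ $\overline{(F^T)}_u=\bar F_v(p)$, via the canonical isomorphism $Z/T\isom\Aut(\bar E_u/\bar F_v)=G_{\bar F_v}(p)$ noted in Section \ref{section on valuations and orderings}, combined with $\Gam_u=\tfrac1{p^\infty}\Gam_v$ and $\overline{(F(p))}_u=\bar F_v(p)$ from \cite{Efrat97}*{Lemma 1.1(b)}. Given $A\subseteq F^T\cap O_u^\times$ whose residues $\bar A$ generate $\bar F_v(p)$ over $\bar F_v$, I would argue as follows. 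The subextension $F(A)/F$ lies in $F^T$ and is unramified (all elements of $A$ are units, and more to the point $F(A)\subseteq F^T$), so its residue field $\overline{F(A)}_u$ is a subextension of $\bar F_v(p)/\bar F_v$. Since reduction $O_u\cap F^T\to\bar F_v(p)$ carries $A$ onto a generating set $\bar A$ of $\bar F_v(p)/\bar F_v$, the residue field $\overline{F(A)}_u$ contains $\bar F_v(\bar A)=\bar F_v(p)$, hence equals it. Now $F(A)/F$ and $F^T/F$ are both unramified pro-$p$ extensions with the same (full) residue extension $\bar F_v(p)/\bar F_v$; the equivalence of categories between unramified pro-$p$ extensions of $(F,v)$ and pro-$p$ extensions of $\bar F_v$ (a consequence of Hensel's lemma / the isomorphism $Z/T\isom G_{\bar F_v}(p)$) forces $F(A)=F^T$. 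The main obstacle is making this last step precise: I must be careful that $F(A)$ is \emph{Galois} over $F$ (or work with its Galois closure, which is still inside $F^T$ and still has residue field $\bar F_v(p)$), and that I am legitimately invoking the order-reversing bijection between closed subgroups of $\bar Z=Z/T$ and fields between $F^T$ and $F$ — equivalently between closed subgroups of $G_{\bar F_v}(p)$ and subextensions of $\bar F_v(p)/\bar F_v$ — to conclude that equality of residue fields forces equality of the unramified extensions.

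For part (c), the plan is to verify the hypotheses of (b) with $A=\mu_{p^\infty}=\bigcup_{n\geq1}\mu_{p^n}(F(p))$ when $\bar F_v$ is finite. First, $\mu_{p^\infty}\subseteq F^T$: the roots of unity of $p$-power order lie in the unramified extension because reduction $O_u^\times\to\bar F_v(p)^\times$ is injective on prime-to-$\Char\bar F_v$ torsion (Hensel), and $\Char\bar F_v\neq p$, so each $\mu_{p^n}$ embeds into $\bar F_v(p)$ and is fixed by $T=T(u/v)$ by the very definition of the inertia group; also these roots of unity are units, so $\mu_{p^\infty}\subseteq F^T\cap O_u^\times$. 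Second, $\bar A=\mu_{p^\infty}(\bar F_v(p))$ generates $\bar F_v(p)$ over $\bar F_v$: since $\bar F_v$ is finite, its maximal pro-$p$ extension $\bar F_v(p)$ is obtained by adjoining all $p$-power roots of unity (the pro-$p$ part of the absolutely abelian, indeed cyclotomic, extension of a finite field), so $\bar F_v(\mu_{p^\infty}(\bar F_v(p)))=\bar F_v(p)$. Then (b) applies and gives $F(\mu_{p^\infty})=F^T$. I expect this part to be routine once (b) is in place; the only mild care needed is the standard fact that over a finite field every finite extension is cyclotomic, so the pro-$p$ part is generated by $p$-power roots of unity.
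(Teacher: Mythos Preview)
Your treatment of (b) and (c) is essentially correct and close to the paper's own argument. For (b) the paper applies Ostrowski's theorem directly to the tower $F\subseteq F(A)\subseteq F^T$: since $\Gam_v=\Gam_{u^{F(A)}}=\Gam_{u^T}$ and $\overline{F(A)}=\bar F_v(\bar A)=\bar F_v(p)=\overline{F^T}$, the fundamental equality forces $[F^T:F(A)]=1$. This is precisely the degree computation underlying your ``equivalence of categories'' step, so your worry about $F(A)$ being Galois is in fact unnecessary once one argues this way. Part (c) is handled the same way in both.

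The real gap is in (a). The semidirect decomposition cited just before the proposition uses $\bar Z$ to denote the abstract quotient $Z/T$; the content of (a) is that the \emph{specific} subgroup $\bar Z=\Gal(F(p)/F^{\bar Z})$, with $F^{\bar Z}=F(\root{p^r}\of t\ |\ r\geq1)$, is a complement of $T$ in $G$. You assert this ``because adjoining roots of a uniformizer is linearly disjoint from the inertia field $F^T$'', but in the present setting ($F$ is not assumed Henselian, only $G_F(p)=Z$) this linear disjointness is not a given --- it is precisely half of what must be proved. Moreover, linear disjointness ($F^T\cap F^{\bar Z}=F$, equivalently $T\bar Z=G$) by itself does not yield $T\cap\bar Z=1$; you also need $F^TF^{\bar Z}=F(p)$, which you do not address. (Note too that $F^{\bar Z}/F$ need not be Galois, since only $\mu_p\subseteq F$ is assumed, so speaking of ``its Galois group over $F$'' is problematic.) The paper establishes both $F^T\cap F^{\bar Z}=F$ and $F^TF^{\bar Z}=F(p)$ by computing, via Ostrowski's theorem, the value group and residue field of each of these two fields.
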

\begin{proof}
(a) \quad
For every subextension $F\subseteq K\subseteq F(p)$ let $u_K$ be the unique extension of $v$ to $K$.
For subextensions $F\subseteq K\subseteq K'\subseteq F(p)$ one has, by Ostrowski's theorem \cite{Efrat06}*{Th.\ 17.2.1},
\begin{equation}
\label{Ostrowski}
[K':K]=(\Gam_{u_{K'}}:\Gam_{u_K})[\overline{(K')}_{u_{K'}}:\overline{K}_{u^K}]
\end{equation}
as supernatural numbers (see \cite{Efrat06}*{Ch.\ 13}).

By \cite{Efrat06}*{Prop.\ 16.1.3(e)}, $\Gam_v=\Gam_{u^T}$.
Since the value group of $F(\root {p^r}\of t)$ is $\frac1{p^r}\Gam_v$, (\ref{Ostrowski}) implies that its residue field is $\bar F_v$, and hence, the residue field of $F^{\bar Z}$ is $\bar F_v$.
Consequently, for $K=F^T\cap F^{\bar Z}$ we have $\Gam_{u^K}=\Gam_v$ and $\bar K_{u^K}=\bar F_v$. 
It follows from (\ref{Ostrowski}) that $K=F$.  

Similarly, $\Gam_{u^{\bar Z}}=\Gam_u$ and $\overline{(F^T)}_{u^T}=\overline{(F(p))}_u$, by \cite{Efrat06}*{Prop.\ 16.1.3(a)} and since the residue characteristic is $\neq p$.
Therefore the compositum $K=F^TF^{\bar Z}$ satisfies $\Gam_{u^K}=\Gam_u$ and $\bar K_{u^K}=\overline{(F(p))}_u$. 
By (\ref{Ostrowski}) again, $K=F(p)$.  

The latter two facts imply (a).

\medskip

(b) \quad
Denote $L=F(A)\subseteq F^T$.
From $\Gam_v=\Gam_{u^T}$ and $\overline{(F^T)}_{u^T}=\overline{(F(p))}_u=\bar F_v(p)$, we obtain that
\[
\Gam_v=\Gam_{u^L}=\Gam_{u^T}, \quad \bar F_v(\bar A)=\overline{L}_{u^L}=\overline{(F^T)}_{u^T}=\bar F_v(p).
\]
By (\ref{Ostrowski}) once more, $L=F^T$.

\medskip

(c) \quad
We only need to show that $\mu_{p^\infty}\subseteq F^T$.
Indeed, let $\zeta\in\mu_{p^\infty}$ and $\sig\in T$.
Then $\zeta\in O_u^\times$ and $\sig(\zeta)/\zeta\in \mu_{p^\infty}$.
By the definition of $T$ we have $\overline{\sig(\zeta)/\zeta}=1$.
But the residue map is injective on $\mu_{p^\infty}$, so $\sig(\zeta)/\zeta=1$, as desired.
\end{proof}

We refer to \cite{Efrat06}*{Sect.\ 22.1} for a description of the action of $\bar Z$ on $T$.

\subsection{Orderings}
When $p=2$ we will also be interested in \textsl{orderings} on $F$, by which we mean additively closed subgroups $P$ of $F^\times$ of index $2$ \cite{Efrat06}*{Ch.\ 6}.
Note that this implies that $\Char\,F=0$.
The elements of $P$ are considered as ``positive''.
It is \textsl{Archimedean} if for every $a\in F$ there is a positive integer $k$ such that $k\cdot1\pm a\in P$. 

A \textsl{relative real closure} of $F$ with respect to $P$ inside $F(2)$ (also called a \textsl{Euclidean closure} of $F$ at $P$) is a maximal extension $F_P$ of $F$ inside $F(2)$ such that there is an ordering $Q$ on $F_P$ with $P=Q\cap F$.
By Becker's relative version of the Artin-Schreier theory (\cite{Becker74}, \cite{Efrat06}*{Th.\ 19.2.10}), this is equivalent to $[F(2):F_P]=2$.
Furthermore, $Q=(F_P^\times)^2$. 
We choose such a closure $F_P$ and set $Z_P=T_P=\Gal(F(2)/F_P)\isom\dbZ/2$. 
Furthermore, any two relative real closures of $F$ with respect to $P$ are isomorphic over $F$ \cite{Efrat06}*{Th.\ 19.4.2}, which means that $Z_P$ and $T_P$ are determined up to conjugacy in $G_F(2)$.

We say that a finite list of non-trivial valuations and orderings on $F$ is \textsl{independent}, if they induce distinct topologies on $F$  \cite{Efrat06}*{Sect.\ 10.1}.
This is automatic for discrete (or more generally, rank $1$) valuations and Archimedean orderings \cite{Efrat06}*{Example 10.1.1 and Prop.\ 7.3.1}.

For a pro-$p$ Galois extension $F'$ of $F$, we say that a valuation $v$ with an extension $u$ to $F(p)$ a above, resp., an ordering $P$ (if $p=2$), on $F$ is \textsl{unramified} in $F'$ if $T(u/v)$, resp., $T_P$, fixes $F'$.
Note that this does not depend on the choice of $u$ or $F_P$.

\section{The Inertia Group and the Kummer Pairing}
\label{section on inertia group and the Kummer pairing}
In this section, we fix a positive integer $m$.
For a profinite group $S$ let $S^{(2)}=S^m[S,S]$ be the second term in its decreasing $m$-central filtration, and set $S^{[2]}=S/S^{(2)}$. 

Assume that $F$ is a field that contains a root of unity of order $m$.
Let $E/F$ be a Galois extension with $G=\Gal(E/F)$.
We denote by $F^\times\cap E^m$ the group of all $a\in F^\times$ such that $\root m\of a\in E$ (this is independent of the choice of the $m$th root).
Recall that, for $a\in F^\times\cap E^m$, the \textsl{Kummer homomorphism} is the map
\[
\kappa_a\colon G\to\mu_m(F), \quad \sig \mapsto\dfrac{\sig(\root m\of a)}{\root m\of a}.
\]
The \textsl{Kummer pairing} is the well-defined perfect bilinear map of abelian groups of exponent dividing $m$:
\[
G^{[2]}\times (F^\times\cap E^m)/(F^\times)^m\to\mu_m(F), \quad (\bar \sig,a(F^\times)^m)=\kappa_a(\sig).
\]
This pairing is functorial in $E$ in the natural sense.

Let again $u$ be a valuation on $E$ extending a valuation $v$ on $F$, and let $Z,T,V$ be as in Section \ref{section on valuations and orderings}.
We write $E_V$ for the fixed field of $V$ in $E$ and let $u_V$ be the restriction of $u$ to $E_V$.
There is a well-defined perfect bilinear map of abelian groups
\begin{equation}
\label{duality for inertia group}
T/V\times \Gam_{u_V}/\Gam_v\ \to\  \mu(\bar E_u), \quad
(\tau V, u_V(\alp)+\Gam_v)\mapsto \overline{\tau(\alp)/\alp}
\end{equation}
for $\tau\in T$ and $\alp\in E_V^\times$ \cite{Efrat06}*{Th.\ 16.2.6}.
Identifying $\mu(\bar E_u)$ as a subgroup of $\dbQ/\dbZ$, we therefore deduce that the abelian profinite group $T/V$ is the Pontrjagin dual of the torsion abelian (discrete) group $\Gam_{u_V}/\Gam_v$.
By the Pontrjagin duality theory, this pairing induces a duality between the cokernel of the  map $T/V\to T/V$ of exponentiation by $m$ and the kernel of the multiplication by $m$ map on $\Gam_{u_V}/\Gam_v$;
That is, there is a perfect bilinear map
\[
T/VT^m\times {}_m(\Gam_{u_V}/\Gam_v)\to\mu_m(\bar E_u)=\mu_m(\bar F_v), \  (\tau VT^m,u_V(\alp)+\Gam_v)\mapsto\overline{\tau(\alp)/\alp}.
\]
Combined with Lemma \ref{snake lemma isomorphism}, it yields a perfect bilinear map
\begin{equation}
\label{induced pairing}
T/VT^m\times\Ker(\Gam_v/m\to\Gam_{u_V}/m)\to\mu_m(\bar F_v).
\end{equation}

The valuation $v$ induces an abelian group homomorphism 
\[
\bar v\colon (F^\times\cap E^m)/(F^\times)^m\to\Ker(\Gam_v/m\to\Gam_u/m).
\]

\begin{prop}
\label{Kummer commutes with inertia pairing}
Let $E_1$ be a Galois extension of $F$ contained in $E_V$, and let $G_1=\Gal(E_1/F)$.
The pairing (\ref{induced pairing}) and the Kummer pairing combine to a commutative diagram of perfect bilinear maps
\[
\xymatrix{
T/VT^m\ar[d]&*-<3pc>{\times}& \Ker(\Gam_v/m\to\Gam_{u_V}/m)\ar[r]&\mu_m(\bar F_v) \\
G_1^{[2]}&*-<3pc>{\times}& (F^\times\cap E_1^m)/(F^\times)^m\ar[u]^{\bar v}\ar[r]&\mu_m(F)\ar[u]_{\wr},
}
\]
where the left vertical homomorphism is induced by restriction and the right vertical isomorphism is the residue map.
\end{prop}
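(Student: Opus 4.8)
The plan is to verify the commutativity of the diagram by tracing a pair of generic elements around both routes and comparing the images in $\mu_m(\bar F_v)$, using the explicit descriptions of both pairings together with the explicit form of the snake-lemma isomorphism recorded in Remark \ref{explicit snake lemma isomorphism}. Concretely, fix $\tau\in T$ and $a\in F^\times\cap E_1^m$, and choose an $m$th root $\alpha=\root m\of a$; since $E_1\subseteq E_V$, we may regard $\alpha\in E_V^\times$. The right vertical map sends $a(F^\times)^m$ to $\bar v(a(F^\times)^m)=\Ker(\Gam_v/m\to\Gam_u/m)$-element represented by $v(a)$, and via Lemma \ref{snake lemma isomorphism} this corresponds (by Remark \ref{explicit snake lemma isomorphism}) to $u_V(\alpha)+\Gam_v\in{}_m(\Gam_{u_V}/\Gam_v)$, because $m\,u_V(\alpha)=u_V(a)=v(a)\in\Gam_v\cap m\Gam_u$. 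Plugging this into the pairing (\ref{duality for inertia group}) underlying (\ref{induced pairing}) gives $\overline{\tau(\alpha)/\alpha}$. On the other route, the left vertical map is restriction $T/VT^m\to G_1^{[2]}$, and the Kummer pairing of $\tau|_{E_1}$ with $a(F^\times)^m$ is exactly $\kappa_a(\tau)=\tau(\root m\of a)/\root m\of a=\tau(\alpha)/\alpha\in\mu_m(F)$; applying the residue isomorphism $\mu_m(F)\xrightarrow{\sim}\mu_m(\bar F_v)$ (injectivity/surjectivity of the residue map on $\mu_m$ since $\Char\bar F_v\nmid m$) again produces $\overline{\tau(\alpha)/\alpha}$. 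Hence the two routes agree.

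First I would record the well-definedness issues as preliminaries: that $\bar v$ is well defined (adding an $m$th power to $a$ changes $v(a)$ by an element of $m\Gam_v$, so the class in $\Ker(\Gam_v/m\to\Gam_u/m)$ is unchanged), that the left restriction map is well defined on $T/VT^m$ since $V$ and $T^m$ act trivially on the relevant $m$th roots modulo the pairing's target, and that the residue map $\mu_m(F)\to\mu_m(\bar F_v)$ is an isomorphism — this last point is where the hypothesis that $F$ contains a root of unity of order $m$ and that $u$ extends $v$ is used (the residue map is injective on $\mu_m$ because distinct $m$th roots of unity in characteristic prime to $m$ have distinct residues, and surjective since $\mu_m(F)$ already has full size). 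Then the core computation is the single identification $\bar v(a(F^\times)^m)\leftrightarrow u_V(\alpha)+\Gam_v$ through Lemma \ref{snake lemma isomorphism}, which is precisely the content of Remark \ref{explicit snake lemma isomorphism} applied with $\gamma=u_V(\alpha)$.

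I expect the main obstacle to be purely bookkeeping rather than conceptual: one must be careful that all four maps in the square are genuinely the stated ones — in particular that the pairing (\ref{induced pairing}) is obtained from (\ref{duality for inertia group}) by the two-step Pontrjagin-duality reduction (cokernel of exponentiation by $m$ versus kernel of multiplication by $m$) followed by Lemma \ref{snake lemma isomorphism}, and that under this composite the element $\tau VT^m$ pairs with $\Ker(\Gam_v/m\to\Gam_{u_V}/m)$-representative $v(\alpha^m)=v(a)$ exactly via $\overline{\tau(\alpha)/\alpha}$. Once the explicit formula from Remark \ref{explicit snake lemma isomorphism} is in hand, both routes literally evaluate to $\overline{\tau(\alpha)/\alpha}$, so commutativity is immediate; perfectness of the bottom pairing is the Kummer pairing's perfectness (already asserted in the text), perfectness of the top pairing is (\ref{induced pairing}), and the vertical maps being compatible with these pairings is exactly the commutativity just proved together with the fact that $\bar v$ and restriction are the adjoints of each other under the two pairings.
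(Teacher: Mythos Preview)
Your proposal is correct and follows essentially the same route as the paper: pick $\tau\in T$ and $a\in F^\times\cap E_1^m$, set $\alpha=\root m\of a\in E_V^\times$, use Remark \ref{explicit snake lemma isomorphism} with $\gamma=u_V(\alpha)$ to identify $\bar v(a(F^\times)^m)$ with the class of $\gamma$ in ${}_m(\Gam_{u_V}/\Gam_v)$, and then observe that both routes produce $\overline{\tau(\alpha)/\alpha}=\overline{\kappa_a(\tau)}$. Your additional remarks on well-definedness and on the residue map $\mu_m(F)\to\mu_m(\bar F_v)$ being an isomorphism are more explicit than what the paper records, but do not change the argument.
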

\begin{proof}
Take $\tau\in T$ and $a\in F^\times\cap E_1^m$.
Let $\gam=u_V(\root m\of a)\in\Gam_{u_V}$.
It is independent of the choice of the root, and $\gam+\Gam_v\in{}_m(\Gam_{u_V}/\Gam_v)$.
By Remark \ref{explicit snake lemma isomorphism}, the isomorphism of Lemma \ref{snake lemma isomorphism} maps $\gam+\Gam_v$ to 
\[
\bar v(a(F^\times)^m)=m\gam+m\Gam_v\in\Ker(\Gam_v/m\to\Gam_{u_V}/m).
\]
Therefore in the upper pairing we have
\[
(\tau VT^m,\bar v(a(F^\times)^m))=\overline{\tau(\root m\of a)/\root m\of a}=\overline{\kappa_a(\tau)},
\] 
whereas in the lower (Kummer) pairing we have
\[
(\tau G_1^{(2)},a(F^\times)^m)=\kappa_a(\tau).
\qedhere
\]
\end{proof}

Now assume that $p$ is a prime number and $m=q>1$ is a $p$-power.

\begin{prop}
\label{Kummer map and valuations}
Suppose that $E=F(p)$ and $\Char\,\bar F_v\neq p$.
Let $a\in F^\times$.
\begin{enumerate}
\item[(a)]
If $v(a)\equiv 0\pmod {q\Gam_v}$, then $\kappa_a$ is trivial on $T$.
\item[(b)]
If $\bar v(a)$ generates $\Gam_v/q$ and $\Gam_v/q\isom\dbZ/q$, then $\kappa_a\colon T\to\mu_q(F)$ is surjective.
\end{enumerate}
\end{prop}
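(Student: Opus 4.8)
The plan is to feed the hypotheses into the commutative diagram of perfect pairings of Proposition~\ref{Kummer commutes with inertia pairing}, taken with $m=q$ and $E_1=E=F(p)$, so that $G_1=G_F(p)$ and the lower pairing is the usual Kummer pairing into $\mu_q(F)$. Since $F$ contains a root of unity of order $p$ and $\Char\bar F_v\neq p$, the discussion in Subsection~\ref{subsection on the tame pro-p discrete case} gives $V=1$, hence $E_V=E$ and $u_V=u$; thus the upper pairing of that diagram is the perfect bilinear map
\[
T/T^q\times\Ker(\Gam_v/q\to\Gam_u/q)\longrightarrow\mu_q(\bar F_v)
\]
of (\ref{induced pairing}), and its left vertical map is restriction $T/T^q\to G_F(p)^{[2]}$. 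A preliminary point I would record first: because $F$ contains $\mu_q$, for every $b\in F^\times$ the extension $F(\root q\of b)/F$ is cyclic of $p$-power degree by Kummer theory, hence lies in $F(p)$; so $F^\times\cap E^q=F^\times$, the group $\Gam_v$ lies in $q\Gam_u$, and therefore $\Ker(\Gam_v/q\to\Gam_u/q)$ is all of $\Gam_v/q$, while $\bar v\colon F^\times/(F^\times)^q\to\Gam_v/q$ is the surjection induced by $v$, sending $a(F^\times)^q$ to $v(a)+q\Gam_v$.

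With this in place, Proposition~\ref{Kummer commutes with inertia pairing} gives, for every $\tau\in T$, the identity $\overline{\kappa_a(\tau)}=\bigl(\tau T^q,\bar v(a(F^\times)^q)\bigr)$ in the upper pairing, where the bar on the left is the residue isomorphism $\mu_q(F)\xrightarrow{\sim}\mu_q(\bar F_v)$. For part~(a), the hypothesis $v(a)\equiv 0\pmod{q\Gam_v}$ says exactly that $\bar v(a(F^\times)^q)=0$ in $\Gam_v/q$, so the right-hand side is trivial for all $\tau\in T$; injectivity of the residue map then forces $\kappa_a(\tau)=1$ for every $\tau\in T$, i.e.\ $\kappa_a$ is trivial on $T$.

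For part~(b), $\Gam_v/q\cong\dbZ/q$ by hypothesis, and $\mu_q(\bar F_v)\cong\mu_q(F)$ is cyclic and killed by $q$. Perfectness of the upper pairing makes the adjoint $T/T^q\to\operatorname{Hom}(\Gam_v/q,\mu_q(\bar F_v))$ an isomorphism, and since $\bar v(a(F^\times)^q)$ generates the cyclic group $\Gam_v/q$, evaluation at it is an isomorphism $\operatorname{Hom}(\Gam_v/q,\mu_q(\bar F_v))\xrightarrow{\sim}\mu_q(\bar F_v)$. Composing these with the displayed identity, the map $\tau T^q\mapsto\overline{\kappa_a(\tau)}$ is a surjection $T/T^q\twoheadrightarrow\mu_q(\bar F_v)$; undoing the residue isomorphism shows that $\kappa_a\colon T\to\mu_q(F)$ is surjective.

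I do not expect a genuine obstacle here: the substantive content is entirely carried by Proposition~\ref{Kummer commutes with inertia pairing}, which is already available, so the argument reduces to elementary Pontryagin-duality bookkeeping. The two points that need care are the identification $F^\times\cap E^q=F^\times$ (which is where the assumption $\mu_q\subseteq F$ enters, guaranteeing $\Ker(\Gam_v/q\to\Gam_u/q)=\Gam_v/q$ so that $\bar v(a)$ can indeed ``generate'' it), and keeping straight the directions of the vertical maps in that diagram, in particular that $\overline{\kappa_a(\tau)}$ is the \emph{residue} of $\kappa_a(\tau)$ and that the left vertical map is restriction, so the diagram really does compute $\kappa_a$ on the subgroup $T$.
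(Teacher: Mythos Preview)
Your proof is correct and follows essentially the same approach as the paper: both reduce to Proposition~\ref{Kummer commutes with inertia pairing} after observing that $V=1$ (so $u_V=u$) and that $\Gam_v/q\to\Gam_u/q$ has trivial kernel equal to all of $\Gam_v/q$. The only cosmetic difference is that the paper instantiates the diagram with $E_1=F(\root q\of a)$ and then argues via $\Gal(F(\root q\of a)/F)$, whereas you take $E_1=F(p)$ and read off $\kappa_a|_T$ directly from the upper pairing via evaluation at the generator $\bar v(a)$; the underlying duality step is the same.
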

\begin{proof}
Since $\Char\,\bar F_v\neq p$ and $G=G_F(p)$ is pro-$p$, the ramification group $V$ is trivial, so $u_V=u$.
Since $E=E^p$ we have $\Gam_u/q=0$.
Proposition \ref{Kummer commutes with inertia pairing} therefore gives a commutative diagram of perfect bilinear maps
\begin{equation}
\label{cd of combined pairings}
\xymatrix{
T/T^q\ar[d]&*-<3pc>{\times}&\Gam_{v}/q\ar[r]&\mu_q(\bar F_v)\\
\Gal(F(\root q\of a)/F)&*-<3pc>{\times}&(F^\times\cap (F(\root q\of a))^q)/(F^\times)^q\ar[u]^{\bar v}\ar[r]&\mu_q(F).\ar[u]_{\wr}
}
\end{equation}

(a) \quad
By assumption, $\bar v(a)=0$, so for every $\tau\in T$ one has $(\tau T^q,\bar v(a))=0$ with respect to the upper pairing in (\ref{cd of combined pairings}). 
Hence the image $\bar\tau$ of $\tau$ in $\Gal(F(\root q\of a)/F)$ satisfies $(\bar\tau,a)=0$ with respect to the lower (Kummer) pairing,
that is $\kappa_a(\tau)=1$.

\medskip

(b) \quad 
In (\ref{cd of combined pairings}), $\bar v$ is surjective, so the map $T/T^q\to\Gal(F(\root q\of a)/F)$ is injective.
Moreover, since the upper pairing in (\ref{cd of combined pairings}) is perfect, $T/T^q\isom\dbZ/q$.
We deduce that $\Gal(F(\root q\of a)/F)$ has order $q$, and $T/T^q\xrightarrow{\sim}\Gal(F(\root q\of a)/F)$ via restriction.
Since the lower pairing is perfect, $\kappa_a\colon\Gal(F(\root q\of a)/F)\to\mu_q(F)$ is surjective, and the assertion follows.
\end{proof}

The record the following analogous facts for orderings:

\begin{prop}
\label{Kummer map and orderings}
Suppose that $q=2$ and $E=F(2)$.
Let $P$ be an ordering on $F$ with relative real closure $F_P$, and let $T=\Gal(F(p)/F_P)$.
\begin{enumerate}
\item[(a)]
If $a\in P$, then $\kappa_a$ is trivial on $T$.
\item[(b)]
If $a\not\in P$, then $\kappa_a\colon T\to\mu_2(F)=\{\pm1\}$ is surjective.
\end{enumerate}
\end{prop}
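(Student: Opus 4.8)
The plan is to imitate the proof of Proposition \ref{Kummer map and valuations}, replacing the inertia pairing \eqref{induced pairing} for a discrete valuation by the Artin--Schreier description of the Galois group $T=Z_P=\Gal(F(2)/F_P)$ attached to an ordering $P$. Recall from Section \ref{section on valuations and orderings} that $F_P$ is a relative real closure of $F$ at $P$, that $[F(2):F_P]=2$, that the unique ordering $Q$ on $F_P$ extending $P$ equals $(F_P^\times)^2$, and that $T=Z_P\isom\dbZ/2$ is generated by the nontrivial $F_P$-automorphism $\tau$ of $F(2)$. Since $q=2$ and $\mu_2(F)=\{\pm1\}$, the Kummer homomorphism of $a\in F^\times$ restricted to $T\isom\dbZ/2$ is determined by the single value $\kappa_a(\tau)=\tau(\sqrt a)/\sqrt a\in\{\pm1\}$, which is $1$ if and only if $\sqrt a\in F_P$.

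The key step is therefore to identify, for $a\in F^\times$, when $\sqrt a$ lies in $F_P$, and to match this with whether $a\in P$. First I would note that $F(2)=F_P(\sqrt d)$ for any $d\in F_P^\times$ with $d\notin Q=(F_P^\times)^2$; in particular, since $P\subseteq Q$ but $-1\notin Q$ (as $Q$ is an ordering), we have $F(2)=F_P(\sqrt{-1})$. Now for $a\in P\subseteq Q=(F_P^\times)^2$, the element $a$ is a square in $F_P$, so $\sqrt a\in F_P$ and hence $\kappa_a$ is trivial on $T$; this gives (a). For (b), suppose $a\notin P$. Then, using that $P$ has index $2$ in $F^\times$ and $-1\notin P$, we get $-a\in P$, hence $-a\in Q=(F_P^\times)^2$, so $\sqrt{-a}\in F_P$. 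If we also had $\sqrt a\in F_P$, then $\sqrt{-1}=\sqrt{-a}/\sqrt a\in F_P$, contradicting $[F(2):F_P]=2$ and $F(2)=F_P(\sqrt{-1})$. Therefore $\sqrt a\notin F_P$, so $\tau(\sqrt a)=-\sqrt a$ and $\kappa_a(\tau)=-1$; since $T\isom\dbZ/2$, the homomorphism $\kappa_a\colon T\to\{\pm1\}$ is then surjective.

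The one point that needs a little care is the dichotomy ``$a\in P$ or $-a\in P$'' used in (b): this follows because $P$ has index $2$ in $F^\times$, so $F^\times/P\isom\dbZ/2$, and $-1\notin P$ (an ordering contains no element together with its negative, as $0=a+(-a)$ would be positive), hence $-1$ represents the nontrivial coset; thus every $a\notin P$ satisfies $-a\in P$. I expect no real obstacle here — the argument is a direct translation of the valuation case using Becker's relative Artin--Schreier theory (\cite{Becker74}, \cite{Efrat06}*{Th.\ 19.2.10, Th.\ 19.4.2}) in place of Ostrowski's theorem, and the main content is simply the observation that $(F_P^\times)^2\cap F^\times$ contains $P$ while $F(2)=F_P(\sqrt{-1})$.
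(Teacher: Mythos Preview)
Your proof is correct and rests on the same key fact as the paper's: the unique ordering on $F_P$ is $Q=(F_P^\times)^2$. The paper's argument is a one-liner: since $Q$ restricts to $P$ on $F^\times$, one has the \emph{equality} $P=F^\times\cap(F_P^\times)^2$, and both (a) and (b) follow immediately (for (b), $a\notin P$ gives $a\notin(F_P^\times)^2$, hence $\sqrt a\notin F_P$ and $\kappa_a(\tau)=-1$). You only invoke the inclusion $P\subseteq Q$ and then handle (b) by the detour through $-a\in P$ and $\sqrt{-1}\notin F_P$; this is correct but slightly longer than simply using that $P=Q\cap F^\times$, a fact you already recalled from Section~\ref{section on valuations and orderings}.
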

\begin{proof}
The unique ordering on $F_P$ is $(F_P^\times)^2$.
Hence $P=F^\times\cap F_P^2$, from which both assertions follow.
\end{proof}

\section{Kummer Linking Structures}
\label{section on Kummer linking structures}
We now arrive at our main construction of linking structures for valuations and orderings.
It extends to general fields the notion of the maximal pro-$p$ Galois group of a number field unramified outside a finite set of primes and is based on the group-theoretic construction of the structures $\calG_{{\widehat I},I}$ in Section \ref{section on the structures GI}.

\subsection{Field-theoretic setup}
Let again $p$ be a prime number and $q>1$ a $p$-power.
Let $F$ be a field which contains a root of unity of order $q$.
By choosing such a root, we fix once and for all an isomorphism $\dbZ/q\isom \mu_q$.
Set as before $G=G_F(p)=\Gal(F(p)/F)$.

Let $v_j$, $j\in J_\val$, be a set of non-equivalent non-trivial valuations on $F$, and for each $j$ fix an extension $u_j$ of $v_j$ to $F(p)$.
Let $Z_j\geq T_j$ be the decomposition and inertia groups, respectively, of $u_j$ over $v_j$ in $G$, and let $\bar Z_j=Z_j/T_j\isom G_{\bar F_j}(p)$.

When $p=2$ let $P_j$, $j\in J_\ord$, be distinct orderings on $F$.
We then choose a relative real closure $F_{P_j}$ of $F$ in $F(2)$ with respect to $P_j$, and set $Z_j=T_j=\Gal(F(2)/F_{P_j})\isom\dbZ/2$ and $\bar Z_j=1$.
When $p\neq2$ take $J_\ord=\emptyset$.
Note that, as $F$ contains a $q$th root of unity, $J_\ord\neq\emptyset$ implies that $q=2$.

Set $J=J_\val\discup J_\ord$.

Now let $I=\{i_1\nek i_n\}$ be a subset of $J$ of $n$ elements and write $I_\val=I\cap J_\val$ and $I_\ord=I\cap J_\ord$.
We assume that for every $i\in I_\val$ the valuation $v_i$ is discrete and $\Char\,\bar F_{v_i}\neq p$.
We recall from Subsection \ref{subsection on the tame pro-p discrete case} that $T_i\isom\dbZ_p$ and $Z_i=T_i\rtimes\bar Z_i$.
The latter equality trivially holds also for $i\in I_\ord$.
Moreover, we assume that $v_i$, $i\in I_\val$, and $P_i$, $i\in I_\ord$, are independent.

\begin{lem}
\label{t i}
There exist elements $t_i\in F^\times$, $i\in I$, such that:
\begin{enumerate}
\item[(1)]
$t_i$ is a uniformizer for $v_i$, if $i\in I_\val$;
\item[(2)]
$t_i\in -P_i$, if $i\in I_\ord$;
\item[(iii)]
$t_i$ is a $v_{i'}$-unit whenever $i\neq i'\in I_\val$;
\item[(iv)]
$t_i\in P_{i'}$ whenever $i\neq i'\in I_\ord$. 
\end{enumerate}
\end{lem}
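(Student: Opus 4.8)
The plan is to obtain all four requirements from a single application of the approximation theorem for finitely many independent valuations and orderings. Fix $i\in I$. First I would set up the target data for $t_i$: for each $i'\in I_\val$ put $a_{i'}=1$, except that $a_i$ is taken to be a uniformizer of $v_i$ when $i\in I_\val$ (such an element exists because $v_i$ is discrete); for each $l\in I_\ord$ put $\eps_l=1$, except that $\eps_i=-1$ when $i\in I_\ord$. Since, by assumption, the valuations $v_{i'}$ ($i'\in I_\val$) and the orderings $P_l$ ($l\in I_\ord$) are independent, I would invoke the approximation theorem \cite{Efrat06} to obtain an element $t_i\in F^\times$ satisfying $v_{i'}(t_i-a_{i'})>v_{i'}(a_{i'})$ for all $i'\in I_\val$ and $\eps_l t_i\in P_l$ for all $l\in I_\ord$.

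Then I would read the four asserted properties off from these approximations. If $i\in I_\val$, then $v_i(t_i-a_i)>v_i(a_i)$ together with $a_i$ being a uniformizer forces $v_i(t_i)=v_i(a_i)$, the least positive element of $\Gam_{v_i}$, so $t_i$ is a uniformizer for $v_i$; this is condition (1). For each $i'\in I_\val$ with $i'\neq i$ one has $v_{i'}(a_{i'})=0$ and $v_{i'}(t_i-1)>0$, hence $v_{i'}(t_i)=0$, i.e.\ $t_i$ is a $v_{i'}$-unit; this is condition (iii), and it also covers the case $i\in I_\ord$. For each $l\in I_\ord$ with $l\neq i$ we have $\eps_l=1$, so $t_i\in P_l$; this is condition (iv), and it also covers the case $i\in I_\val$. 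Finally, if $i\in I_\ord$, then $\eps_i=-1$ yields $-t_i\in P_i$, i.e.\ $t_i\in-P_i$; this is condition (2).

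The one point that needs care is the precise form of the approximation theorem, since it must impose valuation conditions and sign conditions simultaneously. This is legitimate because each prescribed condition cuts out an open subset in the relevant topology: the set $\{x\in F^\times:v_{i'}(x-a_{i'})>v_{i'}(a_{i'})\}$ is a translate of a basic open neighbourhood of $0$ in the $v_{i'}$-topology and consists of elements of $v_{i'}$-value exactly $v_{i'}(a_{i'})$; and in the order topology of $(F,P_l)$ the positive cone and the negative cone are both open, since any positive $x_0$ lies in the basic interval $(0,x_0+1)\subseteq P_l$. As the $v_{i'}$ and the $P_l$ induce pairwise distinct topologies on $F$, which is precisely the independence hypothesis, simultaneous approximation by an element of $F$ is available. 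Beyond pinning this down, the argument is purely formal, so I do not expect a genuine obstacle.
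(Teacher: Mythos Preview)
Your proof is correct and follows essentially the same approach as the paper: both arguments choose a target element at each place (a uniformizer or $-1$) and then invoke the weak approximation theorem for independent valuations and orderings to find $t_i$ in the intersection of the corresponding open neighbourhoods. The paper phrases the open sets multiplicatively as cosets $t'_iU_i$ with $U_i=O_{v_i}^\times$ or $U_i=P_i$, whereas you phrase them additively (balls $v_{i'}(x-a_{i'})>v_{i'}(a_{i'})$) and by sign; this is a purely cosmetic difference.
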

\begin{proof}
For $i\in I_\val$ let $U_i=O_{v_i}^\times$, and for $i\in I_\ord$ let $U_i=P_i$.
For every $a\in F^\times$ the set $aU_i$ is open in the $v_i$-topology (resp., $P_i$-topology) on $F$ \cite{Efrat06}*{Section 8.1}.

Now take $t'_i$ to be a uniformizer for $v_i$, if $i\in I_\val$, and take $t'_i=-1$ if $i\in I_\ord$. 
By the weak approximation theorem for independent valuations and orderings \cite{Efrat06}*{Th.\ 10.1.7 and Example 10.1.1}, we may choose
\[
t_i\in t'_iU_i\cap\bigcap_{i'\in I\setminus \{i\}}U_{i'}.
\qedhere
\] 
\end{proof}

We choose $t_i$, $i\in I$, as in Lemma \ref{t i}.
For every $i\in I$ take 
\[
\bar\pi_i=\kappa_{t_i}\colon T_i\to\mu_q\isom \dbZ/q
\]
to be the Kummer map of $t_i$.
Proposition \ref{Kummer map and valuations}(a) and Proposition \ref{Kummer map and orderings}(a) imply that it does not depend on the choice of $t_i$.
Further, by Proposition \ref{Kummer map and valuations}(b) and Proposition \ref{Kummer map and orderings}(b), $\bar\pi_i$ is surjective.

Define $\pi_i\colon T_i\to\dbU_n$ as in (\ref{images of pi j}), with $R=\dbZ/q$.

As before, consider an intermediate set $I\subseteq {\widehat I}\subseteq J$, and let $N_{{\widehat I}}$ and $G_{{\widehat I}}=G/N_{{\widehat I}}$ be as in Section \ref{section on the structures GI}.
In particular, we assume that (A) holds, that is, the images of $T_i$, $i\in I$, generate $G_{{\widehat I}}$.
This means that $F$ has no proper Galois extension inside $F(p)$ which is unramified with respect to all valuations and orderings in $I\cup(J\setminus{\widehat I})$.

\begin{rem}
\label{independence of extension}
\rm
The validity of assumption (A) in our context is independent of the choice of the extensions $u_j$ and the relative real closures $F_{P_j}$ (if $p=2$).
Indeed, as noted in Section \ref{section on valuations and orderings}, choosing another extension $u_j$ (resp., another relative real closure $F_{P_j}$) amounts to replacing $T_j$ by a conjugate subgroup of $G$.
A standard Frattini argument shows that the pro-$p$ group $G_{{\widehat I}}$ is generated by a family of closed subgroups if and only if it is generated by conjugates of these subgroups.
\end{rem}

In this setup we have a linking structure $\calG_{{\widehat I},I}$ as in (\ref{G hat I I}).

\begin{defin}
\label{Kummer linking structure}
\rm
We call $\calG_{{\widehat I},I}$ the $I$-indexed \textsl{Kummer linking structure of $F$ unramified outside $\widehat I$}.
\end{defin}

Globalizations for $\calG_{{\widehat I},I}$ do not depend on the choices of the extensions $u_j$ of the $v_j$, or the relative real closures with respect to the $P_j$, if $p=2$.
Indeed, as noted above, other choices give conjugates of $Z_j,T_j$, and we apply Remark \ref{conjugates}.

Moreover, globalizations for $\calG_{{\widehat I},I}$ are related to Kummer maps as follows:

\begin{prop}
\label{existence of a globalization}
Let $\rho\colon G_{{\widehat I}}\to\dbU_n$ be a globalization for $\calG_{{\widehat I},I}$.
For every $1\leq l\leq n$ one has $\pr_{l,l+1}\circ\rho=\kappa_{t_{i_l}}$ on $G_{{\widehat I}}$.
\end{prop}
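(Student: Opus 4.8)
The plan is to compare the composite $\pr_{l,l+1}\circ\rho$ with the Kummer homomorphism $\kappa_{t_{i_l}}$ by showing that both are pro-$p$ group homomorphisms $G_I\to\dbZ/q$ that agree on a generating set, and then invoke a uniqueness argument analogous to Remark \ref{uniqueness}. Recall that by Example \ref{special projections are homomorphisms}(3), $\pr_{l,l+1}\colon\dbU_n\to\dbU_1\isom\dbZ/q$ is a group homomorphism, so $\pr_{l,l+1}\circ\rho$ is indeed a homomorphism $G_I\to\dbZ/q$. On the other side, since by assumption (A) the group $G$ is generated by the subgroups $T_j$, $j\in J$, the quotient $G_I=G/N_I$ is generated by the images of $T_i$, $i\in I$ (Remark \ref{GI generated by Ti}); hence it suffices to check that $\pr_{l,l+1}\circ\rho$ and $\kappa_{t_{i_l}}$ agree on each $\res_i(T_i)$, $i\in I$.

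First I would check that $\kappa_{t_{i_l}}$ descends to $G_I$. The point is that $N_I$ is generated as a closed normal subgroup by the $T_j$ with $j\in J\setminus I$; for such $j$, if $j\in J_\val$ then $t_{i_l}$ is a $v_j$-unit with $v_j(t_{i_l})\equiv0\pmod{q\Gam_{v_j}}$ (in fact $v_j(t_{i_l})=0$), so Proposition \ref{Kummer map and valuations}(a) gives $\kappa_{t_{i_l}}|_{T_j}=1$; and if $j\in J_\ord$ then $t_{i_l}\in P_j$ by Lemma \ref{t i}(iv), so Proposition \ref{Kummer map and orderings}(a) gives $\kappa_{t_{i_l}}|_{T_j}=1$. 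Here one should note the indexing subtlety: if $i_l\in I_\val$ then condition (1) of Lemma \ref{t i} makes $t_{i_l}$ a uniformizer for $v_{i_l}$, so the unit condition only applies to $j\neq i_l$, which is exactly the range $J\setminus I$ (together with the other elements of $I$, handled below); if $i_l\in I_\ord$ the analogous remark holds via condition (2). Since $\kappa_{t_{i_l}}$ has image in the abelian group $\mu_q$, it is trivial on the (closure of the) commutator subgroup, hence on the closed normal subgroup $N_I$ generated by those $T_j$, and so factors through $G_I$.

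Next I would verify the agreement of the two maps on each $\res_i(T_i)$, $i\in I$. Since $\rho$ is a globalization, $\rho\circ\res_i=\pi_i$ on $T_i$, so $\pr_{l,l+1}\circ\rho\circ\res_i=\pr_{l,l+1}\circ\pi_i$ on $T_i$. By the defining formula (\ref{images of pi j}), $\pr_{l,l+1}\circ\pi_i$ equals $\bar\pi_i$ if $i=i_l$ and is trivial otherwise. Now $\bar\pi_i=\kappa_{t_i}|_{T_i}$ by construction, so for $i=i_l$ we get $\pr_{l,l+1}\circ\rho\circ\res_{i_l}=\kappa_{t_{i_l}}|_{T_{i_l}}$, matching the restriction of the descended Kummer map. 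For $i\neq i_l$ in $I$: if $i\in I_\val$ then $t_{i_l}$ is a $v_i$-unit by Lemma \ref{t i}(iii) (note $i\neq i_l$), so $v_i(t_{i_l})\equiv0\pmod{q\Gam_{v_i}}$ and Proposition \ref{Kummer map and valuations}(a) gives $\kappa_{t_{i_l}}|_{T_i}=1$; if $i\in I_\ord$ then $t_{i_l}\in P_i$ by Lemma \ref{t i}(iv), so Proposition \ref{Kummer map and orderings}(a) gives $\kappa_{t_{i_l}}|_{T_i}=1$. In both cases this matches the triviality of $\pr_{l,l+1}\circ\pi_i$ on $T_i$.

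Finally, having shown $\pr_{l,l+1}\circ\rho$ and $\kappa_{t_{i_l}}$ (viewed on $G_I$) are pro-$p$ group homomorphisms into $\dbZ/q$ agreeing on the subgroups $\res_i(T_i)$, $i\in I$, which generate $G_I$, I conclude the two maps are equal, giving $\pr_{l,l+1}\circ\rho=\kappa_{t_{i_l}}$ on $G_I$. The main obstacle I anticipate is purely bookkeeping: correctly tracking the index $i_l$ against the cases $i\in I_\val$ versus $i\in I_\ord$ and making sure the unit/positivity conditions from Lemma \ref{t i} are applied with the right exclusions (the element $t_{i_l}$ behaves specially exactly at the place $i_l$ and is a unit / positive at all other places in $J$), together with the small verification that $\kappa_{t_{i_l}}$ genuinely factors through $G_I$ rather than just through $G/N$ for some smaller $N$.
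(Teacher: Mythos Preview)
Your core argument---verifying $\pr_{l,l+1}\circ\rho\circ\res_i=\kappa_{t_{i_l}}$ on each $T_i$, $i\in I$, via (\ref{images of pi j}) together with Propositions \ref{Kummer map and valuations}(a) and \ref{Kummer map and orderings}(a), and then concluding from the fact that the subgroups $\res_i(T_i)$ generate $G_I$---is exactly what the paper does.

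Where your proposal goes beyond the paper is in the separate descent verification, and this step contains a genuine error. Lemma \ref{t i} only controls $t_{i_l}$ at the places indexed by $I$: conditions (iii) and (iv) say that $t_{i_l}$ is a $v_{i'}$-unit (resp.\ lies in $P_{i'}$) for $i'\in I$ with $i'\neq i_l$, not for arbitrary $j\in J\setminus I$. So your assertions ``$t_{i_l}$ is a $v_j$-unit'' and ``$t_{i_l}\in P_j$ by Lemma \ref{t i}(iv)'' for $j\in J\setminus I$ are unjustified, as is the summary in your final paragraph that $t_{i_l}$ ``is a unit / positive at all other places in $J$''. Moreover, even granting the unit condition, Proposition \ref{Kummer map and valuations}(a) requires $\Char\bar F_{v_j}\neq p$, which in the setup of Section \ref{section on Kummer linking structures} is assumed only for $i\in I_\val$, not for $j\in J_\val\setminus I$. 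The paper's proof does not attempt this descent check at all; it works entirely with the indices $i\in I$ and stops once the equality is established on the generating family $\res_i(T_i)$.
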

\begin{proof}
If $i\in I$ and $i\neq i_l$, then $t_{i_l}$ is a $v_i$-unit, if $i\in I_\val$, and is in $P_i$, if $i\in I_\ord$.
Proposition \ref{Kummer map and valuations}(a) and Proposition \ref{Kummer map and orderings}(a) imply that $\kappa_{t_{i_l}}=0$ on $T_i$.
It follows from (\ref{images of pi j}) that for every $1\leq l\leq n$ and $i\in I$ we have $\pr_{l,l+1}\circ\pi_i=\kappa_{t_{i_l}}$ on $T_i$.
Hence
\[
\pr_{l,l+1}\circ\rho\circ\res_i=\kappa_{t_{i_l}}
\]
on $T_i$, where $\res_i\colon T_i\to G_{{\widehat I}}$ is the restriction map.
Finally, by assumption, the subgroups $\res_i(T_i)$, $i\in I$, generate $G_{{\widehat I}}$.
\end{proof}

\subsection{Ramification and decomposition}
\label{subsection on ramification and decomposition}
We now interpret this construction from a field-theoretic perspective.
Define $E_{{\widehat I}}$ to be the fixed field of $N_{{\widehat I}}$ in $F(p)$, so $G_{{\widehat I}}=\Gal(E_{{\widehat I}}/F)$.
The field $E_{{\widehat I}}$ is the maximal pro-$p$ Galois extension of $F$ which is unramified at all $v_j$ and $P_j$, $j\in J\setminus {\widehat I}$.
Given a globalization $\rho$ for $\calG_{{\widehat I},I}$, we write $E_\rho$ for the fixed field of $\Ker(\rho)$ in $F(p)$.
It is a Galois extension of $F$ with $F\subseteq E_\rho\subseteq E_{{\widehat I}}\subseteq F(p)$.
Since $\rho$ is onto $\dbU_n$ (Lemma \ref{surjectivity}), 
\[
\Gal(E_\rho/F)=G_{{\widehat I}}/\Ker(\rho)\isom\dbU_n.
\]

\begin{thm}
\label{ramification}
\begin{enumerate}
\item[(a)]
If $i\in I$, then the valuation, resp., ordering corresponding to $i$ ramifies in the extension $E_\rho/F$ and the image of $T_i$ in $\Gal(E_\rho/F)$ is isomorphic to $\dbZ/q$.
\item[(b)]
If $j\in J\setminus {\widehat I}$, then the valuation, resp., the ordering corresponding to $j$ is unramified in $E_\rho/F$.
\end{enumerate}
\end{thm}
\begin{proof}
(a) \quad
Write $i=i_l\in I$.
Then the image of $T_i$ in $\Gal(E_\rho/F)$ is isomorphic to 
\[
(\rho\circ\res_i)(T_i)=\pi_i(T_i)=\Id_{\dbU_n}+\Img(\bar\pi_i)E_{l,l+1}\isom\Img(\bar\pi_i)=\mu_q\isom\dbZ/q,
\]
by the definition of $\pi_i$ (\ref{images of pi j}) and Lemma \ref{elementary matrices}(a).

\medskip

(b) \quad
As $j\in J\setminus {\widehat I}$, the image of $T_j$  is trivial already in $G_{{\widehat I}}$, whence also in $\Gal(E_\rho/F)$.
\end{proof}

For a globalization $\rho$ for $\calG_{{\widehat I},I}$ and for $j\in J_\val\setminus {\widehat I}$ such that $\rho(\res_j(Z_j))\subseteq \dbV_n$, the linking invariant $[{\widehat I},I,j]$ (Definition \ref{linking invariant}) has the following arithmetical interpretation:  

\begin{thm}
\label{vanishing of linking invariant}
Under the above assumptions, one has $[{\widehat I},I,j]=0$ if and only if the valuation $v_j$ is completely decomposed in $E_\rho$.
\end{thm}
\begin{proof} 
By its definition, $[{\widehat I},I,j]$ vanishes if and only if $\rho(\res_j(Z_j))$ (which by assumption is contained in $\dbV_n$) is trivial.
Equivalently, the restriction of $Z_j$ to $E_\rho$ is trivial, i.e., $v_j$ completely decomposes in $E_\rho$.
\end{proof}

\begin{rem}
\label{linking invariant for orderings}
\rm
The analog of Theorem \ref{vanishing of linking invariant} in the case $j\in J_\ord\setminus {\widehat I}$, $p=2$, also holds in the following sense:
Since $Z_j=T_j$, we have $[{\widehat I},I,j]=0$.
As $j\in J\setminus {\widehat I}$, Theorem \ref{ramification}(b) implies that $P_j$ extends to $E_\rho$.  
\end{rem}

\begin{defin}
\label{linking homomorphism}
\rm 
Let $i=i_l\in I_\val$, let $j\in J_\val$ satisfy $\Char\,\bar F_{v_j}\neq p$, and let $\rho$ be a globalization for $\calG_{{\widehat I},I}$.
The \textsl{linking homomorphism} ${\rm lk}(v_i,v_j)$ of the valuations $v_i,v_j$ is the composed homomorphism
\[
{\rm lk}(v_i,v_j)\colon G_{\bar F_{v_j}}(p)\isom \bar Z_j\xrightarrow{\res_j}G_{{\widehat I}}\xrightarrow{\rho}\dbU_n\xrightarrow{\pr_{l,l+1}}\dbZ/q.
\]
Note that, by Example \ref{special projections are homomorphisms}(3), $\pr_{l,l+1}$ is a group homomorphism.
\end{defin}

\begin{rem}
\rm
(1) \quad
Using Remark \ref{uniqueness}, we see that ${\rm lk}(v_i,v_j)$ is independent of the choices of $\rho$ and $I,{\widehat I}$.

\medskip

(2) \quad
To define ${\rm lk}(v_i,v_j)$, we in fact only need to have a globalization for $\calG_{{\widehat I},I}/V_S$, where $S=\dbI_n\cup\{(l,l+1)\}$ (with notation as in Section \ref{section on unitriangular matrices}).

\medskip

(3)\quad
Similarly we may define the linking homomorphism ${\rm lk}(P_i,v_j)$ for $i\in I_\ord$ and $j$ as above.

\medskip

(4) \quad
As will be explained in Remark \ref{connection with Morishitas linking numbers} below, Definition \ref{linking homomorphism} essentially extends Morishita's notion of the \textsl{linking number} of finite primes in a number field, as in \cite{Morishita12}*{Th.\ 7.4}, to the context of discrete valuations on general fields.
\end{rem}

\section{Number Fields}
\label{section on number fields}
The main results in this and the next section are essentially due to Morishita and Amano (\cite{Morishita02}, \cite{Morishita04},  \cite{Amano14a}), and we present them here in the language of linking structures.
 
\subsection{Setup}
Fix again a $p$-power $q>1$.  
We restrict ourselves to the case where $F$ is a number field which contains a root of unity of order $q$.
Let $O_F$ be its ring of integers.

Let $v_j$, $j\in J_\val$, be the (non-equivalent) discrete valuations on $F$, and for each $j\in J_\val$ we choose a valuation $u_j$ on $F(p)$ extending $v_j$.
Let $\grq_j=\grm_{v_j}\cap O_F$ be the prime ideal in $O_F$ corresponding to $v_j$.
When $p=2$ let $P_j$, $j\in J_\ord$, be the (finitely many) orderings on $F$.
Set again $J=J_\val\discup J_\ord$, where $J_\ord=\emptyset$ if $p\neq2$.

Let $G=G_F(p)=\Gal(F(p)/F)$.
For $j\in J_\val$ we write $Z_j\geq T_j$ for the decomposition and inertia groups, respectively, of the extension $u_j/v_j$ in $G$, and $F^{Z_j}$, $F^{T_j}$ for the fixed fields of $Z_j$, $T_j$, respectively, in $F(p)$.
When $p=2$ and for $j\in J_\ord$ we fix as before a relative real closure $F_{P_j}$ of $F$ in $F(2)$ with respect to $P_j$, and take $Z_j=T_j=\Gal(F(2)/F_{P_j})$.

\subsection{Power residue symbols}
For each $j\in J_\val$ with $\Char\,\bar F_{v_j}\neq p$ we choose a uniformizer $t_j$ for $v_j$ and a compatible system $\root{p^r}\of{t_j}$, $r\geq1$, of $p^r$-roots of $t_j$.
By Proposition \ref{Iwasawa} (with $F$ replaced by $F^{Z_j}$),
$F^{T_j}=F^{Z_j}(\mu_{p^\infty})$ and $Z_j=T_j\rtimes\bar Z_j$, where 
\[
\bar Z_j=\Gal(F(p)/F^{Z_j}(\root{p^r}\of{t_j}\ | r\geq1)).
\]
Furthermore, the subgroup $\bar Z_j$ is mapped isomorphically onto $G_{\bar F_{v_j}}(p)$ by the epimorphism $Z_j\to Z_j/T_j\isom G_{\bar F_{v_j}}(p)$.
Let $\sig_j\in\bar Z_j$ be the lift of the Frobenius automorphism of $G_{\bar F_{v_j}}(p)$.

Since $F$ contains a root of unity of order $q$, so does the finite field $\bar F_{v_j}$.
Therefore 
\[
N_j:=|\bar F_{v_j}|\equiv1\pmod q.
\]

Recall that, when $\Char\,\bar F_{v_j}\neq p$ and for $\alp\in O_{v_j}^\times$, the \textsl{power residue symbol} $\left(\frac\alp{\grq_j}\right)_q$ is the unique $q$th root of unity $\zeta$ in $F$ such that
\[
\zeta\equiv\alp^{(N_j-1)/q}\pmod {\grq_j}
\]
\cite{Neukirch99}*{Ch.\ V, Section 3}.

\begin{lem}
\label{kappa and power residue symbols}
Let  $j\in J_\val$ such that $\Char\,\bar F_{v_j}\neq p$ and $t\in O_F\setminus \grq_j$.
Then 
\[
\kappa_{t}(\sig_j)=\Biggl(\dfrac{t}{\grq_j}\Biggr)_q.
\]
\end{lem}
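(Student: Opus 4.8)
The plan is to unwind both sides of the claimed identity and reduce everything to the defining congruence of the power residue symbol together with the description of the inertia/decomposition structure of $v_j$ in $F(p)$ provided by Proposition~\ref{Iwasawa}. Write $\kappa_t$ for the Kummer homomorphism $G\to\mu_q(F)$ of $t$, so that by definition $\kappa_t(\sigma_j)=\sigma_j(\root q\of t)/\root q\of t$, where $\root q\of t$ is the $q$th power in the chosen compatible system $\root{p^r}\of t$. Since $t\in O_F\setminus\grq_j$, the element $t$ is a $v_j$-unit, hence also a $u_j$-unit, and $\root q\of t$ lies in $O_{u_j}^\times$ (its $v_j$-value, being $\tfrac1q v_j(t)=0$ inside the divisible hull, is $0$). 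Thus it makes sense to reduce $\root q\of t$ modulo the maximal ideal of $O_{u_j}$ and work in the residue field.

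First I would identify the relevant residue field. By Proposition~\ref{Iwasawa} (applied with $F$ replaced by $F^{Z_j}$), we have $F^{T_j}=F^{Z_j}(\mu_{p^\infty})$ and $Z_j=T_j\rtimes\bar Z_j$, and the residue field of $F^{T_j}$ at $u_j$ is $\bar F_{v_j}(p)$, the maximal pro-$p$ extension of the finite field $\bar F_{v_j}$; moreover $\bar Z_j\isom G_{\bar F_{v_j}}(p)$ via the residue action, with $\sigma_j$ lifting the Frobenius $\varphi\colon x\mapsto x^{N_j}$ where $N_j=|\bar F_{v_j}|$. Now $\sigma_j$ acts trivially modulo $\grm_{u_j}$ on the Frobenius of the residue field up to the right power; concretely, for any $u_j$-unit $\alpha$ in $F^{T_j}$ one has $\overline{\sigma_j(\alpha)}=\bar\alpha^{N_j}=\varphi(\bar\alpha)$. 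I would apply this to $\alpha=\root q\of t$, which lies in $F^{T_j}$ because $t\in F^{Z_j}$ is fixed by $Z_j$ and hence its $q$th roots generate an extension fixed by $T_j$ (indeed $T_j$ acts trivially on $\mu_{p^\infty}$ by the argument in the proof of Proposition~\ref{Iwasawa}(c), and $\root q\of t$ is fixed by $\bar Z_j$ up to... — more precisely, $F^{Z_j}(\root q\of t)\subseteq F^{T_j}$ since $F^{T_j}=F^{Z_j}(\mu_{p^\infty})$ contains $F^{Z_j}(\root{p^r}\of{t_j})$... here one must be slightly careful, see below). Granting $\root q\of t\in O_{u_j}^\times\cap F^{T_j}$, we get
\[
\overline{\kappa_t(\sigma_j)}=\frac{\overline{\sigma_j(\root q\of t)}}{\overline{\root q\of t}}=\frac{(\overline{\root q\of t})^{N_j}}{\overline{\root q\of t}}=(\overline{\root q\of t})^{N_j-1}=\overline{t}^{(N_j-1)/q}\pmod{\grq_j},
\]
using $N_j\equiv 1\pmod q$. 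The left-hand side is a $q$th root of unity in $\bar F_{v_j}$, and the residue map is injective on $\mu_q(F)$ (a standard fact, since $\Char\bar F_{v_j}\neq p$); comparing with the defining congruence $\zeta\equiv t^{(N_j-1)/q}\pmod{\grq_j}$ of $\bigl(\tfrac{t}{\grq_j}\bigr)_q$ yields $\kappa_t(\sigma_j)=\bigl(\tfrac{t}{\grq_j}\bigr)_q$.

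The step I expect to be the main obstacle is justifying cleanly that $\root q\of t\in F^{T_j}$ — equivalently that $T_j$ (the inertia group of $u_j/v_j$) fixes $\root q\of t$ — so that the residue-field Frobenius computation above is legitimate. Since $t$ need not be a uniformizer for $v_j$ (it is merely a $v_j$-unit), this is \emph{not} the statement that $F^{Z_j}(\root{p^r}\of{t_j})$ is the fixed field of $\bar Z_j$; rather, extracting roots of a $v_j$-unit gives an \emph{unramified} extension of the completion, so its inertia is trivial and it is fixed by $T_j$. I would make this precise by invoking Proposition~\ref{Kummer map and valuations}(a): since $v_j(t)=0\equiv 0\pmod{q\Gamma_{v_j}}$, the Kummer homomorphism $\kappa_t$ is trivial on $T_j$, which is exactly the assertion that $T_j$ fixes $\root q\of t$, i.e.\ $\root q\of t\in F^{T_j}$. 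With that in hand, the residue computation above goes through verbatim, completing the proof.
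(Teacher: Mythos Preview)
Your proof is correct and follows essentially the same route as the paper's: compute $\kappa_t(\sigma_j)\equiv t^{(N_j-1)/q}\pmod{\grm_{u_j}}$ using that $\sigma_j$ induces the Frobenius on the residue field, then compare with the defining congruence of the power residue symbol. Your detour through $F^{T_j}$ (and the appeal to Proposition~\ref{Kummer map and valuations}(a)) is unnecessary, however: since $\sigma_j\in Z_j$ preserves $u_j$, its induced action on the full residue field $\overline{F(p)}_{u_j}=\bar F_{v_j}(p)$ is already the Frobenius $x\mapsto x^{N_j}$, so $\overline{\sigma_j(\alpha)}=\bar\alpha^{\,N_j}$ holds for every $\alpha\in O_{u_j}^\times$, not just for those lying in $F^{T_j}$.
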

\begin{proof}
As $\root q\of t\in O_{u_j}^\times$, and by the choice of $\sig_j$, 
\[
\kappa_t(\sig_j)=\frac{\sig_j(\root q\of t)}{\root q\of{t_i}}\equiv t^{(N_j-1)/q}\pmod{\grm_{u_j}}.
\]
Both sides of this congruence are in $O_F$, so it holds modulo $\grq_j$.
Therefore $\kappa_t(\sig_j)=\bigl(t/\grq_j\bigr)_q$.
\end{proof}

Consider as before finite subsets $I\subseteq {\widehat I}$ of $J$.
We write 
\[
I=\{i_1\nek i_n\}, \quad {\widehat I}_\val={\widehat I}\cap J_\val, \quad {\widehat I}_\ord={\widehat I}\cap J_\ord.
\]
Assume that $\Char\,\bar F_{v_i}\neq p$ for every $i\in \widehat I_\val$, 
and take $t_i\in F^\times$, $i\in \widehat I$, as in Lemma \ref{t i}.
We make as before assumption (A) of Section \ref{section on the structures GI}, namely, that $G_{{\widehat I}}$ is generated by the images of $T_i$, $i\in I$.
Equivalently, $F$ does not have a proper $p$-extension which is unramified at all places in $(J\setminus {\widehat I})\cup I$ (with notation as in subsection 6.3).
Let $\calG_{{\widehat I},I}$ be the linking structure as in Definition \ref{Kummer linking structure}.

For every $i\in \widehat I$ we fix a pro-$p$ generator $\tau_i$ of $T_i$ (which is isomorphic to $\dbZ_p$, if $i\in \widehat I_\val$, or to $\dbZ/2$, if $i\in \widehat I_\ord$ and $p=2$). %
In view of Propositions \ref{Kummer map and valuations}(b) and \ref{Kummer map and orderings}(b), we may assume that $\kappa_{t_i}(\tau_i)=1\pmod{q\dbZ}$ under the fixed isomorphism $\mu_q=\dbZ/q$.
Write $\bar\sig_i,\bar\tau_i$ for the images of $\sig_i,\tau_i$ in $G_{{\widehat I}}$.

\begin{lem}
\label{power residue symbols}
Let $\rho\colon G_{{\widehat I}}\to \dbU_n$ be a globalization for $\calG_{{\widehat I},I}$.
Also let $i=i_l\in I$ and $j\in I_\val$.
Then
\[
(\pr_{l,l+1}\circ\rho)(\bar\tau_j)=\delta_{ij}.
\]
\end{lem}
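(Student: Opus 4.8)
The plan is to compute $(\pr_{l,l+1}\circ\rho)(\bar\tau_j)$ directly from the defining data of the Kummer linking structure $\calG_I$, distinguishing the two cases $j\notin I$ and $j\in I$.

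First I would treat the case $j\notin I$. Here $T_j$ is one of the subgroups $T_{j'}$, $j'\in J\setminus I$, that generate the closed normal subgroup $N_I$ of $G$ with $G_I=G/N_I$; hence $\bar\tau_j=\res_j(\tau_j)$ is trivial in $G_I$. Consequently $(\pr_{l,l+1}\circ\rho)(\bar\tau_j)=\pr_{l,l+1}(\rho(1))=\pr_{l,l+1}(\Id_{\dbU_n})=0$, and this equals $\delta_{ij}$ because $i=i_l\in I$ while $j\notin I$.

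Next I would treat the case $j\in I$. Since $\rho$ is a globalization for $\calG_I$, on $T_j$ we have $\rho\circ\res_j=\pi_j$, so $(\pr_{l,l+1}\circ\rho)(\bar\tau_j)=\pr_{l,l+1}\bigl(\pi_j(\tau_j)\bigr)$. By the definition (\ref{images of pi j}) of $\pi_j$, the $(l,l+1)$-entry of $\pi_j(\tau_j)$ equals $\bar\pi_j(\tau_j)$ if $j=i_l$ and equals $0$ otherwise; and by the normalization of the generators $\tau_i$ fixed above, $\bar\pi_j(\tau_j)=\kappa_{t_j}(\tau_j)=1$. Since $i=i_l$ and $j$ both lie in $I=\{i_1\nek i_n\}$, the condition $j=i_l$ is exactly $j=i$, so $\pr_{l,l+1}(\pi_j(\tau_j))=\delta_{ij}$, as required. (Alternatively, for $j\in I$ one may route the computation through Proposition \ref{existence of a globalization}: for $j\ne i_l$ the element $t_{i_l}$ is a $v_j$-unit by Lemma \ref{t i}(iii), so $\kappa_{t_{i_l}}$ is trivial on $T_j$ by Proposition \ref{Kummer map and valuations}(a) since $\Char\,\bar F_{v_j}\ne p$, while for $j=i_l$ one uses the normalization; this gives the same answer.)

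I do not expect any genuine obstacle here. All the arithmetic content — that $t_{i_l}$ is a uniformizer at $v_{i_l}$ but a unit at the remaining valuations of $I$, and that the Kummer map of a unit kills the inertia group — has already been built into the construction of $\calG_I$ via Lemma \ref{t i}, Propositions \ref{Kummer map and valuations} and \ref{Kummer map and orderings}, and the normalization $\kappa_{t_i}(\tau_i)\equiv1\pmod{q\dbZ}$. The only point needing care is the index bookkeeping: that $i=i_l$ and $j=i_{l'}$ force the $(l,l+1)$-entry of $\pi_j(\tau_j)$ to select precisely the Kronecker delta $\delta_{ij}$, and that for $j\notin I$ one must argue via the vanishing of $\bar\tau_j$ in $G_I$ rather than via a power-residue symbol.
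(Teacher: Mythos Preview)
Your proof is correct and follows essentially the same approach as the paper: split into the cases $j\notin I$ (where $\bar\tau_j=1$ in $G_I$) and $j\in I$ (where the globalization condition $\rho\circ\res_j=\pi_j$ reduces the computation to the explicit formula (\ref{images of pi j}) together with the normalization $\kappa_{t_i}(\tau_i)=1$). The additional alternative route you sketch via Proposition~\ref{existence of a globalization} is also fine but not needed.
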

\begin{proof}
We have $\pr_{l,l+1}\circ\rho\circ\res_j=\pr_{l,l+1}\circ\pi_j$ on $T_j$.
By (\ref{images of pi j}), this is $0$, if $j\neq i$, and is $\kappa_{t_i}$, if $j=i$.
In the latter case, the assertion follows from the choice of $\tau_i$.
\end{proof}

\subsection{The Koch--Hoechsmann presentation}
Given the finite subset $\widehat I$ of $J$, the structure of $G_{{\widehat I}}$ was computed by Koch (\cite{Koch95}*{Th.\ 11.10}, \cite{Haberland78}*{Appendix 1}) and Hoechsmann \cite{Hoechsmann66} under certain assumptions arising from class field theory; see also \cite{Mizusawa19}.
To state this result, we write $\Div(F)$ for the divisor group of $F$, and ${\rm div}\colon F^\times\to\Div(F)$ for the principal divisor homomorphism.
Define
\[
\CyrB_{{\widehat I}}=\Bigl({\rm div}\inv(p\Div(F))\cap\bigcap_{i\in {\widehat I}}(F^{Z_i})^p\Bigr)/(F^\times)^p.
\]
Recall that we are assuming that $\mu_p\subseteq F$.
 
\begin{thm}[Koch, Hoechsmann]
\label{KochHoechsmann}
Suppose that $q=p$, ${}_p\Cl(F)=0$, and $\CyrB_{{\widehat I}}=1$.
Then there is a subset $I$ of $\widehat I$ such that the pro-$p$ group $G_{{\widehat I}}$ is minimally generated by $\bar\tau_i$, $i\in I$, subject to the defining relations 
\begin{enumerate}
\item[(i)]
$\bar\tau_i^{N_i-1}[\bar\tau_i\inv,\bar\sig_i\inv]=1$, if $i\in {\widehat I}_\val$;
\item[(ii)]
$\bar\tau_i^2=1$, if $i\in {\widehat I}_\ord$, $p=2$.
\end{enumerate} 
Here $\bar\tau_i$, $\bar\sig_i$, for $i\in \widehat I\setminus I$, are expressed as pro-$p$ words in the generators $\bar\tau_i$, $i\in I$.

Furthermore, we may omit any of these relations.
\end{thm}

See \cite{Koch95}*{\S 11.4} for a detailed description of the subset $I$.

In view of Example \ref{example of link type group}, we obtain:

\begin{cor}
\label{link type for number fields}
Under the assumptions of Theorem \ref{KochHoechsmann}, 
the linking structure $\calG_{{\widehat I},I}$ has a $\dbV_n$-link type.
\end{cor}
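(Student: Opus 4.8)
The plan is to deduce Corollary \ref{link type for number fields} directly from Theorem \ref{KochHoechsmann} by verifying that the hypotheses of Example \ref{example of link type group} are satisfied in this setting. First I would record the standing data: we are in the case $R=\dbZ/q$ with $q=p$, the target group is $U=\dbU_n$, and $V=\dbV_n$, and $I=\{i_1\nek i_n\}$ has size $n$. Since $\calG_I$ is the Kummer linking structure, the map $\pi_i\colon T_i\to\dbU_n$ is defined by (\ref{images of pi j}), so by construction $\Img(\pi_i)\subseteq\Id_{\dbU_n}+(\dbZ/q)E_{l,l+1}$ for $i=i_l\in I$; this is precisely the first condition required in Example \ref{example of link type group}, and it also gives $\hat\pi(\tau^q)=\Id_{\dbU_n}$ for all $\tau\in\hat T$ via Lemma \ref{elementary matrices}(a).

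The crux is the condition on $\Ker(\hat\lam_T)$. By Theorem \ref{KochHoechsmann}, under the stated assumptions ($q=p$, ${}_p\Cl(F)=0$, $\CyrB_I=1$), this kernel is generated as a closed normal subgroup of $\hat T$ by the elements $\tau_i^{N_i-1}[\tau_i\inv,\sig_i\inv]$ for $i\in I_\val$ together with $\tau_i^2$ for $i\in I_\ord$ (when $p=2$). I need to match each of these against the template $\tau^{q\alp}[\tau,\sig]$ with $\alp\in\dbZ_p$, $\tau\in T_i$, $\sig\in\hat Z_i$ allowed in Example \ref{example of link type group}. For $i\in I_\val$: since $F$ contains a $q$th root of unity, $N_i=|\bar F_{v_i}|\equiv1\pmod q$, so $N_i-1=q\alp_i$ for some $\alp_i\in\dbZ$ (hence in $\dbZ_p$); thus $\tau_i^{N_i-1}=\tau_i^{q\alp_i}$, which is of the required form $\tau^{q\alp}$, and $[\tau_i\inv,\sig_i\inv]$ is a commutator $[\tau,\sig]$ with $\tau=\tau_i\inv\in T_i$ — here I should note that $[\tau_i\inv,\sig_i\inv]$ and $[\tau_i,\sig_i\inv]$ generate the same normal subgroup up to the usual commutator identities, or more directly argue that the relation $\tau_i^{q\alp_i}[\tau_i\inv,\sig_i\inv]=1$ can be rewritten in the normally-generated form with $\tau=\tau_i\inv$, $\alp=-\alp_i$, and $\sig=\sig_i\inv$. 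I also need $\sig_i\inv\in\hat Z_i$, which holds because $\sig_i\in\bar Z_i$ lifts the Frobenius and $\hat Z_i=\hat\lam_T\inv(\res_i(\bar Z_i))$ contains all lifts of elements of $\bar Z_i$. For $i\in I_\ord$ with $p=2$: here $q=2$, $T_i\isom\dbZ/2$, and the relation $\tau_i^2=1$ is of the form $\tau^{q\alp}[\tau,\sig]$ with $\tau=\tau_i$, $\alp=1$, and $\sig=1\in\hat Z_i$ (the trivial commutator).

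Having checked both conditions, I would then invoke Example \ref{example of link type group}: since $\Ker(\hat\lam_T)$ is generated as a closed normal subgroup by elements of the allowed form $\tau^{q\alp}[\tau,\sig]$, the linking structure $\calG_I$ has a $V$-link type with $V=\dbV_n$. This is exactly the assertion of Corollary \ref{link type for number fields}. The proof is therefore short — essentially bookkeeping — and the only genuinely substantive input is the congruence $N_i\equiv1\pmod q$, which lets the exponent $N_i-1$ be absorbed into the $q$-th power, and the identification of $\sig_i\inv$ (resp.\ $1$) as an element of $\hat Z_i$. The main (mild) obstacle I anticipate is being careful with the commutator bookkeeping — making sure the precise shape $\tau_i^{N_i-1}[\tau_i\inv,\sig_i\inv]$ from Theorem \ref{KochHoechsmann} really does fall under the form $\tau^{q\alp}[\tau,\sig]$ demanded in Example \ref{example of link type group} — but this is handled by choosing $\tau=\tau_i\inv$ and adjusting signs, since conjugation and inversion preserve the property of being a normal generator.
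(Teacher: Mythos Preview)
Your proposal is correct and follows exactly the approach the paper takes: the paper simply says ``In view of Example \ref{example of link type group}, this implies'' and leaves the verification implicit, while you have carefully supplied the bookkeeping (the congruence $N_i\equiv1\pmod q$, the rewriting with $\tau=\tau_i^{-1}$ and $\alp=-\alp_i$, and the membership $\sig_i^{-1}\in\hat Z_i$). There is nothing to add.
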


\begin{rem}
\rm
The group $\CyrB_{{\widehat I}}$ is defined in \cite{Koch95} in terms of the completions $\hat F_i$ of $F$ with respect to $v_i$ or $P_i$, rather than the decomposition fields $F^{Z_i}$.
However, this does not matter, since  $((F^{Z_i})^\times)^p=(F^{Z_i})^\times\cap (\hat F_i^\times)^p$ via some choice of an embedding $F^{Z_i}\hookrightarrow \hat F_i$.
Indeed, for the non-obvious inclusion in the valuation case, we recall that the value groups and residue fields of $\hat F_i$ and $F^{Z_i}$ coincide \cite{Efrat06}*{Th.\ 9.3.2(f) and Th.\ 15.2.2}.
The fundamental equality for discrete valued fields \cite{Efrat06}*{Th.\ 17.4.3} therefore implies that there is no extension of $F^{Z_i}$ of degree $p$ inside $\hat F_i$.
In the ordering case we have $[F(2):F^{Z_i}]=2$, $\sqrt{-1}\in F(2)$ and $\hat F_i\isom\dbR$, so we are done again.
\end{rem}

\begin{rem}
\label{connection with Morishitas linking numbers}
\rm
The above defining relations of $G_{{\widehat I}}$ are contained in $G_{{\widehat I}}^{(2)}=G_{{\widehat I}}^q[G_{{\widehat I}},G_{{\widehat I}}]$.
Hence, $G_{{\widehat I}}^{[2]}=G_{{\widehat I}}/G_{{\widehat I}}^{(2)}$ is a free $\dbZ/q$-module on the images of $\tau_i$, $i\in I$.
Therefore, for every $j\in J_\val$ with $\Char\,\bar F_{v_j}\neq p$ there is a unique factorization in $G_{{\widehat I}}^{[2]}$, 
\[
\sig_j\equiv\prod_{i\in I}\tau_i^{\mu(i,j)}\pmod{G_{{\widehat I}}^q[G_{{\widehat I}},G_{{\widehat I}}]}
\]
with $0\leq \mu(i,j)<q$. 
For $F=\dbQ$ and under certain assumptions on the primes $q_i$, $i\in I$, Morishita \cite{Morishita12}*{Th.\ 7.4} defines $\mu(i,j)$ to be the \textsl{mod-$q$ linking number} of the primes $\grq_i$ and $\grq_j$.

For $i=i_l$ and $j\in I$, 
Lemma \ref{power residue symbols} implies that $(\pr_{l,l+1}\circ\rho)(\bar\sig_j)=\mu(i,j)$ in $\dbZ/q$.
Since $\bar\sig_j$ generates $\bar Z_j$, the linking homomorphism ${\rm lk}(v_i,v_j)\colon\bar Z_j\to\dbZ/q$  (Definition \ref{linking homomorphism}), is therefore uniquely determined by $\mu(i,j)$.
In this sense, Definition \ref{linking homomorphism} extends Morishita's notion to the valuation-theoretic context.
\end{rem}

\section{Classical Invariants over $\dbQ$}
\label{section on classical invariants over Q}
We now focus on the case where $F=\dbQ$ and $q=2$.
We keep the notation as in the previous section.

\subsection{Globalizations for $\calG_{\{i,\infty\},\{i\}}$}
We write $q_j$, $j\in J_\val$, for the rational primes, so that $v_j$ is the $q_j$-adic valuation, $\grq_j=q_j\dbZ$, and we may take $t_j=q_j$.
If $j=\infty\in J_\ord$ corresponds to the unique ordering of $\dbQ$, then we take $t_j=-1$.
The power residue symbol $\bigl(\frac\alp{\grq_j}\bigr)_2$ coincides with the Legendre symbol $\bigl(\frac\alp {q_j}\bigr)$ for $\alp\in\dbZ\setminus q_j\dbZ$.
As $\Cl(\dbQ)=0$, assumption (A) holds. 
We recall the ramification behavior of rational primes in quadratic number fields \cite{IrelandRosen82}*{Ch.\ 13, \S1}:

\begin{prop}
\label{characterization of extensions by ramifications}
For a square-free integer $d$, a prime number $q_j$ ramifies in $\dbQ(\sqrt d)$ if and only if either $q_j|d$, or both $q_j=2$ and $d\not\equiv1\pmod4$.
\end{prop}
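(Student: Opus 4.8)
The plan is to deduce the statement from the standard criterion that a rational prime $q$ ramifies in a number field $K$ precisely when $q$ divides the discriminant $\mathrm{disc}(K)$, together with an explicit computation of $\mathrm{disc}(K)$ for $K=\dbQ(\sqrt d)$ with $d$ squarefree.

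First I would recall the ring of integers $O_K$ of $K=\dbQ(\sqrt d)$: writing a general element as $\tfrac{a+b\sqrt d}{2}$ with $a,b\in\dbZ$ and imposing that its trace $a$ and norm $\tfrac{a^{2}-b^{2}d}{4}$ be integral, one obtains $O_K=\dbZ[\sqrt d]$ when $d\equiv 2,3\pmod 4$ and $O_K=\dbZ\bigl[\tfrac{1+\sqrt d}{2}\bigr]$ when $d\equiv 1\pmod 4$. From the corresponding power basis $\{1,\theta\}$ and the formula $\mathrm{disc}(K)=(\theta-\theta')^{2}$ (with $\theta'$ the algebraic conjugate of $\theta$) one gets $\mathrm{disc}(K)=4d$ in the first case and $\mathrm{disc}(K)=d$ in the second.

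With $\mathrm{disc}(K)$ computed, the proposition follows by cases. For odd $q$ one has $q\mid\mathrm{disc}(K)\iff q\mid d$ in both cases $\mathrm{disc}(K)=4d$ and $\mathrm{disc}(K)=d$ (as $q\nmid 4$), matching the asserted criterion for odd primes. For $q=2$, one has $2\mid\mathrm{disc}(K)$ exactly when $\mathrm{disc}(K)=4d$, i.e.\ when $d\equiv 2,3\pmod 4$, which (since a squarefree $d$ is never divisible by $4$) is equivalent to $d\not\equiv 1\pmod 4$; this is precisely the supplementary clause in the statement. Invoking the ramification criterion then gives the claimed equivalence.

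The subtle point --- and the main obstacle for a self-contained treatment --- is the $d\equiv 1\pmod 4$ branch, where using the naive order $\dbZ[\sqrt d]$ (of discriminant $4d$) would wrongly suggest that $2$ ramifies; hence the determination of $O_K$ must be carried out correctly. As an alternative that sidesteps discriminants, one may apply Dedekind's factorization theorem directly (valid here since the relevant order has index prime to the prime in question): for odd $q\nmid d$ the polynomial $x^{2}-d$ is either separable with two roots mod $q$, or irreducible mod $q$ --- in both cases $q$ is unramified; for odd $q\mid d$ it reduces to $x^{2}$, so $q$ ramifies; and for $q=2$ one treats the subcases $d\equiv 3\pmod 4$ (then $x^{2}-d\equiv(x+1)^{2}\bmod 2$, ramified), $d\equiv 2\pmod 4$ (then $x^{2}-d\equiv x^{2}\bmod 2$, ramified), and $d\equiv 1\pmod 4$ (where the minimal polynomial $x^{2}-x+\tfrac{1-d}{4}$ of $\tfrac{1+\sqrt d}{2}$ reduces mod $2$ to $x(x+1)$ if $d\equiv 1\pmod 8$ and to the irreducible $x^{2}+x+1$ if $d\equiv 5\pmod 8$, hence never to a square, so $2$ is unramified). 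Either route yields the claimed equivalence; since the cited reference already records these computations, invoking the discriminant version is the shortest path.
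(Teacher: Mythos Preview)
Your argument is correct. Note, however, that the paper does not supply its own proof of this proposition: it merely records the statement and cites \cite{IrelandRosen82}*{Ch.~13, \S1} for the standard computation. What you have written is precisely the textbook derivation found there---determine $O_K$ for $K=\dbQ(\sqrt d)$, compute $\mathrm{disc}(K)\in\{d,4d\}$, and invoke the criterion that a prime ramifies if and only if it divides the discriminant---so your proposal is fully aligned with the reference the paper relies on.
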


\begin{prop}
\label{isolated in 1 mod 4}
Let $i\in J_\val$ with $q_i\neq2$.
The following conditions are equivalent:
\begin{enumerate}
\item[(a)]
There is a globalization $\rho$ for $\calG_{\{i,\infty\},\{i\}}$;
\item[(b)]
$q_i\equiv1\pmod4$.
\end{enumerate}
Furthermore, in this case, $E_\rho=\dbQ(\sqrt{q_i})$.
\end{prop}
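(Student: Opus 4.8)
The plan is to reduce the statement to a question about quadratic subfields of $\dbQ(2)$ by means of Proposition \ref{equivalent conditions for isolated}, and then to read off the answer from the classical ramification criterion recalled in Proposition \ref{characterization of extensions by ramifications}.

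First I would apply Proposition \ref{equivalent conditions for isolated} with $F=\dbQ$: condition (a) holds if and only if there is a pro-$2$ epimorphism $\rho_0\colon G=G_\dbQ(2)\to\dbZ/2$ that agrees with $\bar\pi_i=\kappa_{q_i}$ on $T_i$ and is trivial on $T_j$ for every $j\in J$ with $j\neq i$. Since $\kappa_{q_i}\colon T_i\to\mu_2=\dbZ/2$ is onto (Proposition \ref{Kummer map and valuations}(b), as $q_i=t_i$ is a uniformizer for $v_i$), any such $\rho_0$ is nonzero, hence is the quadratic character $\kappa_d$ of a unique square-free integer $d\neq1$; equivalently, $\rho_0$ cuts out the quadratic field $\dbQ(\sqrt d)=\dbQ(2)^{\Ker\rho_0}$, which will be the field $E_\rho$ of the statement.

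Next I would translate the local requirements on $\rho_0$ into conditions on $d$. For a finite prime $q_j$, the restriction $\kappa_d|_{T_j}$ is trivial exactly when the inertia group $T_j$ acts trivially on $\dbQ(\sqrt d)$, i.e.\ when $q_j$ is unramified in $\dbQ(\sqrt d)$; this handles the prime $2$ and all odd primes $q_j\neq q_i$ simultaneously. For the ordering $P$ of $\dbQ$, Proposition \ref{Kummer map and orderings} gives that $\kappa_d|_{T_P}$ is trivial precisely when $d\in P$, that is $d>0$. Finally $\kappa_d|_{T_i}=\kappa_{q_i}|_{T_i}$ is nonzero, so $q_i$ ramifies in $\dbQ(\sqrt d)$. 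Thus (a) amounts to the existence of a square-free $d>0$ such that $\dbQ(\sqrt d)$ is ramified at $q_i$ and unramified at every other prime. Feeding this into Proposition \ref{characterization of extensions by ramifications}: ramification at $q_i$ forces $q_i\mid d$; unramifiedness at all other odd primes forces $q_i$ to be the only odd prime factor of $d$; unramifiedness at $2$ forces $d\equiv1\pmod{4}$, hence $d$ odd; together with $d>0$ this pins down $d=q_i$, and the congruence $d\equiv1\pmod{4}$ becomes $q_i\equiv1\pmod{4}$. Conversely, if $q_i\equiv1\pmod{4}$ then $d=q_i$ satisfies all of these, so $\kappa_{q_i}$ has exactly the prescribed behaviour on the inertia groups and on the ordering and provides the required globalization $\rho$, whence $E_\rho=\dbQ(\sqrt{q_i})$; uniqueness of $\rho$, and hence of $E_\rho$, is Remark \ref{uniqueness}.

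The one spot that needs a small extra word is the prime $2$: there the residue characteristic equals $p$, so Proposition \ref{Kummer map and valuations} is not available, and the ``unramified at $2$'' condition has to be treated through the ramification dictionary of Proposition \ref{characterization of extensions by ramifications} rather than through the Kummer pairing. I do not expect a genuine obstacle beyond organizing this translation cleanly, this proposition being an illustrative special case of the general machinery.
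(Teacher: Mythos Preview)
Your proposal is correct and follows essentially the same route as the paper: reduce via Proposition \ref{equivalent conditions for isolated} to the existence of a quadratic character $\kappa_d$ with prescribed inertial behaviour, then read off the constraints on $d$ from Proposition \ref{characterization of extensions by ramifications}. Your final caveat about the prime $2$ is unnecessary: the equivalence ``$\rho_0|_{T_j}$ trivial $\Leftrightarrow$ $q_j$ unramified in $\dbQ(\sqrt d)$'' is just the definition of the inertia group and does not rely on Proposition \ref{Kummer map and valuations}, so no special treatment of the residue characteristic $2$ case is needed.
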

\begin{proof}
By Proposition \ref{equivalent conditions for isolated}, (a) means that there is a pro-$p$ group epimorphism $\rho_0\colon G=G_\dbQ(2)\to \dbZ/2$ which is $\kappa_{q_i}$ on $T_i$, and is trivial on all $T_j$, $j\in J_\val$, $j\neq i$.
Since  $\kappa_{q_i}(T_i)=\mu_2=\dbZ/2$ (Proposition \ref{Kummer map and valuations}(b)), this means that there is a square-free integer $d\neq1$ such that
\begin{enumerate}
\item[(i)]
$q_i$ ramifies in $\dbQ(\sqrt d)$;
\item[(ii)]
$q_j$ is unramified in $\dbQ(\sqrt d)$ for every $j\in J_\val$, $j\neq i$.
\end{enumerate}

When $q_i\equiv1\pmod4$, Proposition \ref{characterization of extensions by ramifications} implies that (i) and (ii) hold for $d=q_i$.

On the other hand, when $q_i\equiv3\pmod4$, Proposition \ref{characterization of extensions by ramifications} implies that there is no $d$ satisfying (i) and (ii).

For the last assertion, recall that $E_\rho$ is the fixed field of $\Ker(\rho)=\Ker(\rho_0)$, namely $\dbQ(\sqrt{q_i})$.
\end{proof}

Next let $n\geq1$ and let $i_1\nek i_n$ be distinct elements of $J$ such that $q_{i_1}\nek q_{i_n}$ are odd primes (that is, $v_{i_1}\nek v_{i_n}$ have residue characteristic $\neq2$).
Let $i_{n+1}=\infty$ correspond to the unique ordering on $\dbQ$ and set ${\widehat I}=\{i_1\nek i_n,i_{n+1}\}$.
Since $\dbQ$ does not have a proper unramified extension, assumption (A) of Section \ref{section on the structures GI} holds.
One has $\CyrB_{{\widehat I}}=1$ \cite{Koch95}*{Example 11.12}.
Further, in Theorem \ref{KochHoechsmann} we may take $I=\hat I_\val=\{i_1\nek i_n\}$ and omit the single relation corresponding to $i_{n+1}=\infty$  (\cite{Koch95}*{Example 11.12}, \cite{Morishita12}*{Th.\ 7.4}). 
We obtain a linking structure $\calG_{{\widehat I},I}$ with target group $\dbU_n$ as in (\ref{G hat I I}).
By Corollary \ref{link type for number fields}, $\calG_{{\widehat I},I}$ has $\dbV_n$-link type.

For $j\in J_\val\setminus {\widehat I}$, we now examine the associated linking invariants $[{\widehat I},I,j]$ (Definition \ref{linking invariant}) when $n$ is small.

\subsection{The case $n=1$ -- the Legendre symbol}
Let $i_1\in J_\val$, $i_2=\infty$, and assume that $q_{i_1}\equiv1\pmod4$.
Let $\rho$ be the unique globalization for $\calG_{\{i_1,i_2=\infty\},\{i_1\}}$ given by Proposition \ref{isolated in 1 mod 4} and Remark \ref{uniqueness}.
Then
\[
E_\rho=\dbQ(\sqrt{q_{i_1}}).
\]

Let $j\in J_\val\setminus \{i_1\}$.
The image of $[{\widehat I},I,j]$ in $\dbZ/2$ is generated by $\rho(\bar\sig_j)_{12}$.
Hence
\begin{equation}
\label{formula for Legendre symbol}
\begin{split}
[{\widehat I}, I,j]=0
&\ \Leftrightarrow\ 
\rho(\bar\sig_j)=\Id_{\dbU_1}
\ \Leftrightarrow\ 
\bar\sig_j \hbox{ fixes } \dbQ(\sqrt{q_{i_1}})\\
&\ \Leftrightarrow\ 
\bar q_{i_1}\in\bigl(\dbF_{q_j}^\times\bigr)^2
\ \ \Leftrightarrow\ \ 
\Bigl(\frac{q_{i_1}}{q_j}\Bigr)=1.
\end{split}
\end{equation}
This is a variant of \cite{Morishita02}*{Th.\ 3.1.3} and \cite{Morishita04}*{Example 1.3.1}.

\subsection{The case $n=2$ -- the R\'edei symbol.} 
Let ${\widehat I}=\{i_1,i_2,i_3=\infty\}\subseteq J$ have size $3$, with $i_1,i_2\in J_\val$, and let $I=\{i_1,i_2\}$.
We assume that:
\begin{itemize}
\item
$q_{i_1}\equiv q_{i_2}\equiv1\pmod4$;
\item
$(q_{i_1}/q_{i_2})=(q_{i_2}/q_{i_1})=1$.
\end{itemize}
Proposition \ref{isolated in 1 mod 4} yields globalizations $\rho_1,\rho_2$ for $\calG_{\{i_1,\infty\},\{i_1\}}$,  $\calG_{\{i_2,\infty\},\{i_2\}}$, respectively.
By Lemma \ref{kappa and power residue symbols},
\[
\kappa_{q_{i_1}}(\sig_{i_2})=\Bigl(\frac{q_{i_1}}{q_{i_2}}\Bigr)=1, \quad
\kappa_{q_{i_2}}(\sig_{i_1})=\Bigl(\frac{q_{i_2}}{q_{i_1}}\Bigr)=1.
\]
Since $\sig_{i_1}\in\bar Z_{i_1}$, and by the definition of $\bar Z_{i_1}$, we have $\kappa_{q_{i_1}}(\sig_{i_1})=\kappa_{t_{i_1}}(\sig_{i_1})=1$, and similarly, $\kappa_{q_{i_2}}(\sig_{i_2})=\kappa_{t_{i_2}}(\sig_{i_2})=1$.
It follows from Proposition \ref{existence of a globalization} that  $\rho_1$ and $\rho_2$ are trivial on $\bar\sig_{i_1},\bar\sig_{i_2}$.
From Corollary \ref{lifting for Vn}  we obtain a (necessarily unique) surjective globalization $\rho\colon G_{{\widehat I}}\to\dbU_2$ for $\calG_{{\widehat I},I}$ which lifts both $\rho_1$ and $\rho_2$.
Then $\Gal(E_\rho/\dbQ)\isom\dbU_2$.

By Theorem \ref{ramification}, the only prime numbers which ramify in $E_\rho$ are $q_{i_1},q_{i_2}$, with ramification groups $\dbZ/2$.
We now apply the following result of Amano:

\begin{thm}
\label{Amanos theorem}
(Amano \cite{Amano14a})  
For distinct prime numbers $q_{i_1}\equiv q_{i_2}\equiv1\pmod4$ such that $(q_{i_1}/q_{i_2})=(q_{i_2}/q_{i_1})=1$, there is a unique Galois extension $K_{\{q_{i_1},q_{i_2}\}}$ of $\dbQ$ with Galois group $\dbU_2$ in which $q_{i_1},q_{i_2}$ have ramification indices $2$ and all other prime numbers are unramified.
\end{thm}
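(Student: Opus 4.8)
The plan is to get existence for free from the construction carried out immediately above, and to establish uniqueness by pinning down first the fixed field and then the relevant quotient of $G_{I_\val}$.

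For existence, take $K_{\{q_{i_1},q_{i_2}\}}:=E_\rho$, where $\rho\colon G_{I_\val}\to\dbU_2$ is the surjective globalization for $\calG_{I_\val}$ produced above via Corollary~\ref{lifting for Vn}; then $\Gal(E_\rho/\dbQ)\isom\dbU_2$. Theorem~\ref{ramification}, applied to the Kummer linking structure $\calG_{I_\val}$, shows that the only valuations of $\dbQ$ ramifying in $E_\rho$ are $v_{i_1},v_{i_2}$, with inertia groups $\isom\dbZ/2$ in $\Gal(E_\rho/\dbQ)$; as $\Char\bar F_{v_{i_l}}\neq2$, the ramification is tame, so $q_{i_1},q_{i_2}$ have ramification index $2$ and all other rational primes are unramified. (Since $I_\ord=\emptyset$ for the set $I_\val$, the same theorem also shows $E_\rho$ is totally real.)

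For uniqueness, let $K/\dbQ$ be any Galois extension with $\Gal(K/\dbQ)\isom\dbU_2$ in which $q_{i_1},q_{i_2}$ have ramification index $2$ and every other rational prime is unramified. As $\dbU_2$ is a $2$-group, $K\subseteq\dbQ(2)$. The first step is to show $K\subseteq E_{I_\val}=F(2)^{N_{I_\val}}$. The maximal abelian subextension $K^{\mathrm{ab}}/\dbQ$ has group $\dbU_2^{\mathrm{ab}}\isom(\dbZ/2)^2$, and its three quadratic subfields are quadratic fields tamely ramified only at primes among $\{q_{i_1},q_{i_2}\}$; since $q_{i_1}\equiv q_{i_2}\equiv1\pmod4$, their radicands must be positive (a negative one would bring in ramification at $2$), so these subfields are exactly $\dbQ(\sqrt{q_{i_1}})$, $\dbQ(\sqrt{q_{i_2}})$, $\dbQ(\sqrt{q_{i_1}q_{i_2}})$, whence $K^{\mathrm{ab}}=\dbQ(\sqrt{q_{i_1}},\sqrt{q_{i_2}})$. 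Comparing ramification indices at $q_{i_1},q_{i_2}$ shows $K/K^{\mathrm{ab}}$ is unramified at all finite primes, so if $K$ were not totally real it would be a quadratic extension of $K^{\mathrm{ab}}$ unramified away from $\infty$ and nonabelian over $\dbQ$; a genus-theoretic argument over $\dbQ(\sqrt{q_{i_1}q_{i_2}})$ (using $(q_{i_1}/q_{i_2})=(q_{i_2}/q_{i_1})=1$) rules this out. Hence $K$ is totally real and unramified outside $\{q_{i_1},q_{i_2}\}$, so $K\subseteq E_{I_\val}$, and $K$ corresponds to an open normal subgroup $N\trianglelefteq G_{I_\val}$ with $G_{I_\val}/N\isom\dbU_2$ in which the images of the inertia generators $\bar\tau_{i_1},\bar\tau_{i_2}$ have order $2$.

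It then remains to show such $N$ is unique. By Corollary~\ref{link type for number fields} and Theorem~\ref{KochHoechsmann}, $G_{I_\val}$ is topologically generated by $\bar\tau_{i_1},\bar\tau_{i_2}$ subject to the relations of type~(i); writing $G_{(1)}=G_{I_\val}$ and $G_{(k+1)}=G_{(k)}^2[G_{I_\val},G_{(k)}]$ for the lower $2$-central series, the hypotheses $q_{i_l}\equiv1\pmod4$ and $(q_{i_1}/q_{i_2})=(q_{i_2}/q_{i_1})=1$ force, via the factorization $\bar\sig_{i_l}\equiv\bar\tau_{i_1}^{\mu(i_1,i_l)}\bar\tau_{i_2}^{\mu(i_2,i_l)}\pmod{G_{(2)}}$ of Remark~\ref{connection with Morishitas linking numbers}, that $\mu(i_1,i_l)=\mu(i_2,i_l)=0$, so $\bar\sig_{i_l}\in G_{(2)}$ and both relators lie in $G_{(3)}$; hence $G_{I_\val}/G_{(3)}$ is the corresponding quotient of the free pro-$2$ group of rank $2$, of order $2^5$. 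Since $\dbU_2$ has class $2$ and exponent $4$, its lower $2$-central series terminates at step $3$, so any $N$ as above contains $G_{(3)}$ and is the preimage of a normal subgroup $M$ of $G_{I_\val}/G_{(3)}$ with quotient $\dbU_2$; such $M$ lies in the Frattini subgroup $G_{(2)}/G_{(3)}\isom(\dbZ/2)^3$, with basis the classes of $\bar\tau_{i_1}^2,\bar\tau_{i_2}^2,[\bar\tau_{i_1},\bar\tau_{i_2}]$. The requirement that $\bar\tau_{i_1},\bar\tau_{i_2}$ have order $2$ in the quotient forces $\bar\tau_{i_1}^2,\bar\tau_{i_2}^2\in M$, and nonabelianness forces $[\bar\tau_{i_1},\bar\tau_{i_2}]\notin M$; as $[G_{(2)}/G_{(3)}:M]=2$, this determines $M=\langle\bar\tau_{i_1}^2,\bar\tau_{i_2}^2\rangle$ uniquely (and one checks the quotient is indeed $\isom\dbU_2$). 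Hence $N$, and therefore $K$, is uniquely determined, and necessarily $K=E_\rho$. The genuine obstacle here is the total-reality step of the previous paragraph: excluding a $\dbU_2$-extension ramified at $\infty$ is exactly where the quadratic-residue hypotheses are used essentially, and it needs honest class field theory over $\dbQ(\sqrt{q_{i_1}q_{i_2}})$ rather than the linking-structure formalism. Alternatively, Amano \cite{Amano14a} builds $K_{\{q_{i_1},q_{i_2}\}}$ explicitly as a R\'edei-type field $\dbQ(\sqrt{q_{i_1}},\sqrt{q_{i_2}},\sqrt{\beta})$ with $\beta=x+y\sqrt{q_{i_1}}$ a primitive solution of $x^2-q_{i_1}y^2=q_{i_2}z^2$, from which existence, the ramification behaviour and uniqueness follow directly; our $E_\rho$ then coincides with that field.
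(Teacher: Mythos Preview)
The paper does not prove this theorem; it is quoted as a result of Amano and cited from \cite{Amano14a}, so there is no in-paper proof to compare against.

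Your existence argument via $E_\rho$ is correct, and it is exactly how the paper uses the theorem: it builds $E_\rho$ with the required Galois group and ramification (via Corollary~\ref{lifting for Vn} and Theorem~\ref{ramification}) and then invokes Amano's uniqueness to identify $E_\rho$ with the R\'edei extension.

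Your uniqueness strategy is sound in outline, and the group-theoretic endgame in the last paragraph is correct: once $K\subseteq E_{I_\val}$, the quotient $G_{I_\val}/G_{(3)}$ has order $2^5$, any $\dbU_2$-quotient factors through it, and the condition that $\bar\tau_{i_1},\bar\tau_{i_2}$ have order $2$ forces the kernel to be $\langle\bar\tau_{i_1}^2,\bar\tau_{i_2}^2\rangle$. The genuine gap is the one you yourself flag: the total-reality step is asserted (``a genus-theoretic argument over $\dbQ(\sqrt{q_{i_1}q_{i_2}})$ rules this out'') but not carried out. Without it you have not shown $K\subseteq E_{I_\val}$, and everything downstream is conditional. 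This step is not a formality: one must genuinely exclude a $\dbU_2$-extension of $\dbQ$ sitting over $\dbQ(\sqrt{q_{i_1}},\sqrt{q_{i_2}})$ that is unramified at all finite primes above $K^{\mathrm{ab}}$ but ramified at infinity, and that requires an actual computation with the narrow class group or genus theory of $K^{\mathrm{ab}}$. A minor additional point: the paper verifies $\CyrB_I=1$ and the Koch--Hoechsmann presentation for $I=\{i_1,i_2,\infty\}$, and then asserts the link-type conclusion for $\calG_{I_\val}$; if you want to invoke the presentation directly for $G_{I_\val}$ you should either check $\CyrB_{I_\val}=1$ separately or pass through $G_I$ and quotient by the image of $\tau_\infty$.

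Amano's own route, which you mention at the end, is different: he constructs the field explicitly as $\dbQ(\sqrt{q_{i_1}},\sqrt{q_{i_2}},\sqrt{\beta})$ from a solution of $x^2-q_{i_1}y^2=q_{i_2}z^2$ and verifies the Galois group, ramification, and uniqueness by direct calculation; that approach sidesteps the total-reality issue entirely. Since the paper treats the theorem as a black box, the honest options are either to complete the total-reality argument in full or, as the paper does, simply cite \cite{Amano14a}.
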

Moreover, as shown by Amano, the field $K_{\{q_{i_1},q_{i_2}\}}$ is the \textsl{R\'edei extension}, constructed explicitly in \cite{Redei39}.
It follows that
\[
E_\rho=K_{\{q_{i_1},q_{i_2}\}}.
\]

Assume further that $j\in J_\val\setminus I$, $q_j\equiv1\pmod4$, and the image of $Z_j$ under $\rho$ is contained in $\dbV_2$.
In particular, $\Char\,\bar F_{v_j}\neq2$, so $Z_j=T_j\rtimes\bar Z_j=T_j\rtimes\langle\bar\sig_j\rangle$.
The image of $[{\widehat I},I,j]$ in $\dbZ/2$ is generated by $\rho(\bar\sig_j)_{13}$.
The \textsl{R\'edei triple symbol} $[q_{i_1},q_{i_2},q_j]_{\rm Redei}$ is defined as follows \cite{Amano14a}*{Def.\ 3.1}:
\[
[q_{i_1},q_{i_2},q_j]_{\rm Redei}=\begin{cases}
\ \ 1,& q_j\hbox{ is completely decomposed in } K_{\{q_{i_1},q_{i_2}\}},\\
-1,&\hbox{otherwise}.
\end{cases}
\]
Consequently,
\begin{equation}
\label{formula for Redei symbol}
\begin{split}
[{\widehat I},I,j]=0
&\ \Leftrightarrow\ 
\bar\sig_j \hbox{ fixes } E_\rho  
\ \Leftrightarrow\ 
\bar\sig_j \hbox{ fixes }K_{\{q_{i_1},q_{i_2}\}}\\
&\ \Leftrightarrow\ 
[q_{i_1},q_{i_2},q_j]_{\rm Redei}=1.
\end{split}
\end{equation}
This is a reformulation of Morishita's \cite{Morishita02}*{Th.\ 3.2.5}.
See also \cite{Vogel05} and \cite{Gartner11} for an additional analysis of this connection.

Numerical examples where the R\'edei symbol is nontrivial are given in \cite{Morishita02}*{Section 3} and \cite{Vogel05}*{Section 3}.
E.g., for $q_{i_1}=5$, $q_{i_2}=41$, $q_j=61$ all Legendre symbols above are trivial, and $[q_{i_1},q_{i_2},q_j]_{\rm Redei}=-1$

\subsection{Linking invariants for $n\geq3$.}
In a similar manner, one obtains linking invariants for higher values of $n$. 
For instance, take distinct $i_1,i_2,i_3\in J_\val$ and set $I=\{i_1,i_2,i_3\}$ and ${\widehat I}=\{i_1,i_2,i_3,i_4=\infty\}$.
Suppose that:
\begin{itemize}
\item
$q_{i_1}\equiv q_{i_2}\equiv q_{i_3}\equiv1\pmod4$;
\item
The Legendre symbols satisfy $\Bigl(\dfrac{q_{i_k}}{q_{i_l}}\Bigr)=1$ for all distinct $1\leq k,l\leq 3$;
\item
The R\'edei symbols satisfy $[q_{i_k},q_{i_l},q_{i_r}]_{\rm Redei}=1$ for all distinct $1\leq k,l,r\leq 3$.
\end{itemize}
Set ${\widehat I}_1=\{i_1,i_2,i_4=\infty\}$ and ${\widehat I}_2=\{i_2,i_3,i_4=\infty\}$.
As we have seen, there are surjective globalizations $\rho_1$, $\rho_2$ for $\calG_{{\widehat I}_1,\{i_1,i_2\}},\calG_{{\widehat I}_2,\{i_2,i_3\}}$, respectively.
Moreover, by the triviality of the R\'edei symbols, $\rho_1,\rho_2$ are trivial on the images of $\bar Z_{i_1},\bar Z_{i_2},\bar Z_{i_3}$.
By Corollary \ref{lifting for Vn}, $\rho_1,\rho_2$ lift further to a surjective globalization $\rho$ for $\calG_{{\widehat I},I}$.
Then $\Gal(E_\rho/\dbQ)\isom\dbU_3$.
By Theorem \ref{ramification}, the only prime numbers which ramify in $E_\rho$ are $q_{i_1},q_{i_2},q_{i_3}$, with ramification groups $\dbZ/2$.

Now let $j\in J_\val\setminus {\widehat I}$ with $q_j\neq2$.
Assuming that the image of $Z_j$ under $\rho$ is contained in $\dbV_3$, we have the linking invariant $[{\widehat I},I,j]\colon Z_j\to\dbZ/2$.
Its image is generated by $\rho(\bar\sig_j)_{14}$.
Thus $[{\widehat I},I,j]=0$ if and only if $\bar\sig_j$ fixes $E_\rho$.

Explicit constructions of Galois extensions with Galois group $\dbU_3$ are given in \cite{Amano14b} and \cite{AtaeiMinacTan17}.
We are unaware of such constructions for $n\geq4$.
Moreover, even for $n=3$, we do not know if a $\dbU_3$-Galois extension of $\dbQ$ with the above ramification properties is unique, similarly to Theorem \ref{Amanos theorem}.
Using the method described in this section, such constructions with uniqueness results will give rise to higher order \textsl{``unitriangular symbols''}, similar to the Legendre and R\'edei symbols for $n=1,2$, respectively.

\section{Cohomological Interpretation of Linking Invariants}
\label{section on cohomological interpretation of linking invariants}
In \cite{Morishita04}, Morishita relates his arithmetic Milnor invariants to Massey products in profinite cohomology.
In this spirit we now interpret the linking invariants $[I,j]$ cohomologically, and identify them as $n$-fold Massey product elements. 

\subsection{Profinite cohomology}
First, we fix some cohomological notation.
Given a $p$-power $q$, a pro-$p$ group $S$ and $r\geq0$, we write $H^r(S):=H^r(S,\dbZ/q)$ for the profinite cohomology group of $S$ of degree $r$ with respect to its trivial action of $S$ on $\dbZ/q$.
Recall that $H^1(S)$ consists of all pro-$p$ group homomorphisms $S\to\dbZ/q$.
We write $\inf,\res,\trg$ for the inflation, restriction, and transgression homomorphisms, respectively \cite{NeukirchSchmidtWingberg}*{Ch.\ I, Sect.\ 5}.

Consider an extension of pro-$p$ groups
\begin{equation}
\label{extension}
1\to N\to\hat T\xrightarrow{\hat\lam}G\to 1
\end{equation}
where $\hat T$ is a free pro-$p$ group and $N\leq \hat T^q[\hat T,\hat T]$.
Then the inflation map $\inf\colon H^1(G)\to H^1(\hat T)$  is an isomorphism and $H^2(\hat T)=0$ \cite{NeukirchSchmidtWingberg}*{Prop.\ 3.5.17}.
It follows from the 5-term exact sequence in profinite cohomology \cite{NeukirchSchmidtWingberg}*{Prop.\ 1.6.7} associated with (\ref{extension}) that the transgression map $\trg\colon H^1(N)^G\to H^2(G)$ is an isomorphism.

Also, there is a perfect bilinear map
\[
(\cdot,\cdot)'\colon N/N^q[N,G]\times H^1(N)^{G}\to\dbZ/q, \quad (\bar r,\psi)'=\psi(r),
\]
where $r\in N$ \cite{EfratMinac11}*{Cor.\ 2.2}.
It induces a bilinear map
\begin{equation}
\label{pairing}
(\cdot,\cdot)\colon N\times H^2(G)\to\dbZ/q, \quad (r,\alp)=(\trg\inv(\alp))(r),
\end{equation}
with left kernel $N^q[N,N]$ and a trivial right kernel.

For $m\geq2$, and with notation as in Example \ref{Sr}(2), we have a central extension
\[
0\to\dbV_m\to\dbU_m\to\overline{\dbU}_m\to1.
\]
Let $\omega_m\in H^2(\overline{\dbU}_m)$ be the classifying cohomology element for this extension via the Schreier correspondence  \cite{NeukirchSchmidtWingberg}*{Th.\ 1.2.4}. 

Let $\rho\colon G\to\dbU_m$ be a pro-$p$ group homomorphism, and set
\[
\hat\pi=\rho\circ\hat\lam\colon \hat T\to\dbU_m, \quad
\bar\rho=\pr_{\dbU_m,\overline\dbU_m}\circ\rho\colon G\to\overline{\dbU}_m.
\]
We write $\bar\rho^*(\omega_m)$ for the pullback of $\omega_m$ to $H^2(G)$ along $\bar\rho$.

\begin{prop}
\label{the Hoechsmann formula}
On $N$ one has  $(\cdot,\bar\rho^*(\omega_m))=\pr_{1,m+1}\circ\hat\pi|_N$.
\end{prop}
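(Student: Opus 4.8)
The plan is to derive the identity from the naturality of the transgression homomorphism, together with the description of $\omega_m$ as a transgression. First I would observe that $(\hat\pi|_N,\hat\pi,\bar\rho)$ is a morphism of central extensions, from $1\to N\to\hat T\xrightarrow{\hat\lam}G\to1$ to $0\to\dbV_m\to\dbU_m\to\overline{\dbU}_m\to1$. Indeed, $\pr_{\dbU_m,\overline{\dbU}_m}\circ\hat\pi=\bar\rho\circ\hat\lam$ kills $N$, so $\hat\pi(N)\subseteq\dbV_m$ and the restriction $\hat\pi|_N\colon N\to\dbV_m$ is defined; and since $\dbV_m$ is central in $\dbU_m$ (see (\ref{center of Un})), this restriction carries the conjugation action of $G=\hat T/N$ on $N$ into the trivial action on $\dbV_m$. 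Identifying $\dbV_m\isom\dbZ/q$ via $\pr_{1,m+1}$, we thereby get a well-defined class $\pr_{1,m+1}\circ\hat\pi|_N\in H^1(N)^G$, which is the right-hand side of the asserted identity, viewed as a function on $N$.

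Next I would use two standard facts. (i) Under the Schreier correspondence, the class of the lower central extension is the transgression of the identity character of its kernel; pushing forward along $\pr_{1,m+1}$ in the coefficients, this says precisely $\omega_m=\trg(\pr_{1,m+1})$, with $\pr_{1,m+1}$ now regarded as an element of $H^1(\dbV_m)^{\overline{\dbU}_m}$. (ii) Transgression is natural for morphisms of extensions, so the square with horizontal maps $\trg\colon H^1(\dbV_m)^{\overline{\dbU}_m}\to H^2(\overline{\dbU}_m)$ and $\trg\colon H^1(N)^G\to H^2(G)$, left vertical $(\hat\pi|_N)^*$, and right vertical $\bar\rho^*$, commutes. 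Chasing $\pr_{1,m+1}$ around this square gives
\[
\bar\rho^*(\omega_m)=\bar\rho^*\bigl(\trg(\pr_{1,m+1})\bigr)=\trg\bigl((\hat\pi|_N)^*\pr_{1,m+1}\bigr)=\trg\bigl(\pr_{1,m+1}\circ\hat\pi|_N\bigr).
\]
Since $\hat T$ is free and $N\leq\hat T^q[\hat T,\hat T]$, the transgression $\trg\colon H^1(N)^G\to H^2(G)$ is an isomorphism (as recalled before (\ref{pairing})), hence $\trg\inv(\bar\rho^*(\omega_m))=\pr_{1,m+1}\circ\hat\pi|_N$ in $H^1(N)^G$. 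By the definition of the pairing (\ref{pairing}), for $r\in N$ we then obtain $(r,\bar\rho^*(\omega_m))=\bigl(\trg\inv(\bar\rho^*(\omega_m))\bigr)(r)=\pr_{1,m+1}(\hat\pi(r))$, which is the claim.

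If one prefers to stay at the cochain level, the same computation can be done by hand: choose a normalized set-theoretic section $s$ of $\pr_{\dbU_m,\overline{\dbU}_m}$, so that $\omega_m$ is represented by the $2$-cocycle $(x,y)\mapsto s(x)s(y)s(xy)\inv\in\dbV_m$; writing $\hat\pi(x)=\phi(x)\,s(\bar\rho(\hat\lam(x)))$ with $\phi(x)\in\dbV_m$ defines a continuous $1$-cochain $\phi$ on $\hat T$ whose coboundary is the pullback along $\bar\rho\circ\hat\lam$ of that cocycle, so $\trg\inv(\bar\rho^*(\omega_m))$ is represented by $\phi|_N$; and on $N$ one has $\bar\rho(\hat\lam(r))=1$, whence $\phi(r)=\hat\pi(r)$. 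I expect the only genuine difficulty in either route to be bookkeeping: making the sign and normalization conventions in the Schreier correspondence defining $\omega_m$, in the transgression, in the pairing (\ref{pairing}), and in the identification $\dbV_m\isom\dbZ/q$ via $\pr_{1,m+1}$ mutually consistent, so that no stray sign survives. Everything else is a formal consequence of the functoriality of the Lyndon--Hochschild--Serre spectral sequence.
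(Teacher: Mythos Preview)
Your argument is correct and follows essentially the same route as the paper: both reduce to the identity $\trg(\pr_{1,m+1}\circ\hat\pi|_N)=\bar\rho^*(\omega_m)$ and then invoke the definition of the pairing~(\ref{pairing}). The paper obtains this identity by citing \cite{Efrat23}*{Prop.\ 4.1 and Remark 4.2(4)} (based on Hoechsmann) after displaying the fiber-product diagram $\dbU_m\times_{\overline{\dbU}_m}G$, whereas you derive it directly from $\omega_m=\trg(\pr_{1,m+1})$ and naturality of transgression; the only slip is calling $1\to N\to\hat T\to G\to1$ a \emph{central} extension---it need not be---but your argument only uses the $G$-equivariance of $\hat\pi|_N$ for the trivial action on $\dbV_m$, which you do verify.
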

\begin{proof}
There is a commutative diagram with exact rows 
\[
\label{cd for extensions}
\xymatrix{
&&&\hat T\ar[ld]_{\hat\pi\times\hat\lam}\ar[ldd]^(0.25){\hat\pi}\ar[d]^{\hat\lam}&\\
1\ar[r]&\dbZ/q\ar[r]\ar[d]^{\wr}&\dbU_m\times_{\bar\dbU_m}G\ar[r]\ar[d]&G\ar[dl]^\rho\ar[r]\ar[d]^{\bar\rho}&1\\
1\ar[r]&\dbV_m\ar[r]&\dbU_m\ar[r]&\overline\dbU_m\ar[r]&1,
}
\]
where the vertical isomorphism is the inverse of $\pr_{1,m+1}$.
The pullback $\bar\rho^*(\omega_m)$ corresponds to the upper central extension \cite{EfratMinac11}*{Remark 6.1}.
One has $\hat\pi|_N\in H^1(N)^G$ and $\trg(\hat\pi|_N)=\bar\rho^*(\omega_m)$, by  \cite{Efrat23}*{Prop.\ 4.1 and Remark 4.2(4)} (which is based on results of Hoechsmann \cite{Hoechsmann68}).
The assertion now follows from the definition of the pairing $(\cdot,\cdot)$.
\end{proof}

\subsection{Linking invariants and relations in link groups}
For the cohomological description of the linking invariant $[{\widehat I},I,j]$, we will need the technical Lemma \ref{linking invariant and relations of link type groups} below, which gives a subtle connection between $[{\widehat I},I,j]$ and relations in linking structures of $\dbV_n$-link type, as in Example \ref{example of link type group}.

Consider the setup of Section \ref{section on the structures GI} with $R=\dbZ/q$.
Thus $I\subseteq{\widehat I}$, $j\not\in {\widehat I}$, and the images of $T_i$, $i\in I$, generate $G_{{\widehat I}}$ (assumption (A)).
We write 
\[
I^*=I\cup\{j\}=\{i_1\nek i_n,j\}, \quad {\widehat I}^*={\widehat I}\cup\{j\}.
\]
In this way we obtain a linking structure $\calG_{{\widehat I}^*,I^*}$ with target group $\dbU_{n+1}$, and we define 
\[
G_{{\widehat I}^*}, \ \hat T^*=\star_{i\in I^*}T_i,\  \res^*_j\colon Z_j\to G_{{\widehat I}^*},\  \hat\lam^*_{T^*}\colon\hat T^*\to G_{{\widehat I}^*},\  \hat\pi^*\colon \hat T^*\to\dbU_{n+1}
\]
accordingly.
Let $\hat\lam_{T^*}\colon\hat T^*\to G_{{\widehat I}}$ be the composition of $\hat\lam^*_{T^*}$ with the epimorphism $G_{{\widehat I}^*}\to G_{{\widehat I}}$. 

Suppose that $\rho^*$ is a globalization for $\calG_{{\widehat I}^*,I^*}$.
By Remark \ref{uniqueness}, it induces the above globalization $\rho$ for $\calG_{{\widehat I},I}$, and there is a commutative diagram
\[
\xymatrix{
\hat T^*\ar@/_2pc/[dd]_{\hat\lam_{T^*}}\ar[d]^{\hat\lam^*_{T^*}}\ar[rd]^{\hat\pi^*}&\\
G_{{\widehat I}^*}\ar[d]\ar[r]^{\rho^*}&\dbU_{n+1}\ar[d]^{\pr}\\
G_{{\widehat I}}\ar[r]^{\rho}&\dbU_n.
}
\]
Here the map $\pr\colon \dbU_{n+1}\to\dbU_n$ is the projection on the upper-left $(n+1)\times(n+1)$-submatrix.

We recall that the linking invariant $[{\widehat I},I,j]$ is defined only under the assumption that $\rho$ maps the image $\res_j(Z_j)$ of $Z_j$ in $G_{{\widehat I}}$ into $\dbV_n$.

\begin{lem}
\label{linking invariant and relations of link type groups}
Assume that $(\rho\circ\res_j)(Z_j)\subseteq \dbV_n$.
Let $\sig\in Z_j$, $\tau',\hat\sig\in\hat T^*$ and $\tau\in T_j$, and suppose that $\hat\lam^*_{T^*}(\hat\sig)=\res^*_j(\sig)$.
For $r=(\tau')^q[\tau,\hat\sig]$ one has
\[
(\pr_{1,n+2}\circ\hat\pi^*)(r)=-\bar\pi_j(\tau)\cdot [{\widehat I},I,j](\sig).
\]
\end{lem}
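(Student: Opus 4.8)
The plan is to express everything through the homomorphism $\hat\pi^*=\rho^*\circ\hat\lam^*_{T^*}$ and to reduce the assertion to a short matrix computation in $\dbU_{n+1}(\dbZ/q)$. Put $A=\hat\pi^*(\tau)$, $B=\hat\pi^*(\hat\sig)$ and $M=\hat\pi^*((\tau')^q)$, so that $\hat\pi^*(r)=M\cdot[A,B]$ and we only need $\pr_{1,n+2}$ of this product. First I would pin down $A$ and $B$. Since $\tau\in T_j$ with $j=i_{n+1}$ and $\rho^*$ is a globalization for $\calG_{I^*}$, one has $A=\pi_j(\tau)=\Id_{\dbU_{n+1}}+\bar\pi_j(\tau)E_{n+1,n+2}$ by (\ref{images of pi j}) with $l=n+1$. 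For $B$, the relation $\hat\lam^*_{T^*}(\hat\sig)=\res^*_j(\sig)$ gives $B=\rho^*(\res^*_j(\sig))$, and applying the projection $\pr\colon\dbU_{n+1}\to\dbU_n$ together with the commutative diagram preceding the lemma (which gives $\pr\circ\rho^*\circ\res^*_j=\rho\circ\res_j$ on $Z_j$) yields $\pr(B)=\rho(\res_j(\sig))$. By the hypothesis $(\rho\circ\res_j)(Z_j)\subseteq\dbV_n$ this element lies in $\dbV_n$ and has $(1,n+1)$-entry equal to $[I,j](\sig)$. As $\pr$ is the projection onto the upper-left $(n+1)\times(n+1)$-block, i.e.\ it forgets exactly the entries $(k,n+2)$ with $1\le k\le n+1$, it follows that $B$ agrees with $\Id_{\dbU_{n+1}}$ at every above-diagonal position except $(1,n+1)$, where it equals $[I,j](\sig)$, while its last column is arbitrary. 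This exact shape of $B$ is what makes the commutator degenerate.

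Now the computation. Abbreviate $c=[I,j](\sig)$ and use the paper's convention $[x,y]=x\inv y\inv xy$. From the shape of $B$, the $(n+2)$-nd rows of $B$ and $B\inv$ are the last coordinate row, so $E_{n+1,n+2}B=E_{n+1,n+2}=E_{n+1,n+2}B\inv$; and the $(n+1)$-st column of $B\inv$ has $1$ in position $n+1$ and $-c$ in position $1$, so $B\inv E_{n+1,n+2}=E_{n+1,n+2}-cE_{1,n+2}$. Using also $E_{n+1,n+2}^2=E_{n+1,n+2}E_{1,n+2}=0$, a direct expansion gives $B\inv AB=\Id_{\dbU_{n+1}}+\bar\pi_j(\tau)\bigl(E_{n+1,n+2}-cE_{1,n+2}\bigr)$, and then, multiplying by $A\inv=\Id_{\dbU_{n+1}}-\bar\pi_j(\tau)E_{n+1,n+2}$ on the left,
\[
[A,B]=A\inv\bigl(B\inv AB\bigr)=\Id_{\dbU_{n+1}}-\bar\pi_j(\tau)\,c\,E_{1,n+2}\in\dbV_{n+1},
\]
so that $\pr_{1,n+2}([A,B])=-\bar\pi_j(\tau)\cdot[I,j](\sig)$.

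It remains to handle $M=\hat\pi^*((\tau')^q)=\hat\pi^*(\tau')^q$. Since $\hat\pi^*$ maps $T_j$ into $\Id_{\dbU_{n+1}}+(\dbZ/q)E_{n+1,n+2}$, which by Lemma \ref{elementary matrices}(a) is isomorphic to $(\dbZ/q)^+$ and hence of exponent $q$, we get $M=\Id_{\dbU_{n+1}}$ whenever $\tau'\in T_j$ — which is the relevant case, as in the $\dbV_n$-link type relators of Example \ref{example of link type group} one has $(\tau')^q=\tau^{q\alp}$ with $\tau\in T_j$ and $\alp\in\dbZ_p$. Hence $\hat\pi^*(r)=M\cdot[A,B]=\Id_{\dbU_{n+1}}-\bar\pi_j(\tau)\,c\,E_{1,n+2}$, and applying $\pr_{1,n+2}$ gives $-\bar\pi_j(\tau)\cdot[I,j](\sig)$, as claimed. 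I expect the main difficulty to be bookkeeping rather than ideas: one must get the shape of $B$ exactly right so that $[A,B]$ collapses to the single term $\bar\pi_j(\tau)\,c\,E_{1,n+2}$, and keep careful track of which matrix-unit products vanish. A secondary point worth noting is that the $q$-th power factor drops out only because $\tau'$ lies in a single $T_i$; for a general $\tau'\in\hat T^*$ the matrix $\hat\pi^*(\tau')^q$ can have a nonzero $(1,n+2)$-entry (already for $n=1$, $q=2$), so one should assume $\tau'\in T_j$.
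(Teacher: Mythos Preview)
Your argument is essentially the same as the paper's: pin down $\hat\pi^*(\tau)$ from (\ref{images of pi j}), read off the shape of $\hat\pi^*(\hat\sig)$ from the assumption $(\rho\circ\res_j)(Z_j)\subseteq\dbV_n$ together with the commutative square relating $\rho^*$ and $\rho$, and then do the matrix--commutator computation in $\dbU_{n+1}$. Your calculation of $[A,B]=\Id-\bar\pi_j(\tau)\,[I,j](\sig)\,E_{1,n+2}$ is correct and matches the paper's.

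Your final caveat about $\tau'$ is justified and in fact sharper than the paper. The paper disposes of $(\tau')^q$ by asserting that $\hat\pi^*(\tau')\in\Id_{\dbU_{n+1}}+\sum_{l}(\dbZ/q)E_{l,l+1}$ ``by (\ref{images of pi j})'' and that this set is a subgroup of exponent $q$. But that set is \emph{not} a subgroup, and for a general $\tau'\in\hat T^*$ the image $\hat\pi^*(\tau')$ need not lie in it; your $n=1$, $q=2$ example with $\tau'=\tau_1\tau_2$ already gives $\hat\pi^*(\tau')^2=\Id+E_{13}\neq\Id$. The argument (in both the paper and your write-up) goes through as soon as $\tau'$ lies in a single factor $T_i$, which is exactly what occurs in the relators of Example \ref{example of link type group} and in the Koch--Hoechsmann presentation, so nothing downstream is affected. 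You are right that the hypothesis on $\tau'$ should be tightened from $\hat T^*$ to some $T_i$ (not only $T_j$; in Corollary \ref{linking invariant and pullbacks} the generating relators run over all $i\in I^*$).
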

\begin{proof}
First we note that, by (\ref{images of pi j}), 
\[
(\rho^*\circ\hat\lam^*_{T^*})(\tau')=\hat\pi^*(\tau')\in\Id_{\dbU_{n+1}}+\sum_{l=1}^{n+1}(\dbZ/q)E_{l,l+1}.
\]
The latter subgroup of $\dbU_{n+1}$ has exponent $q$, so $(\rho^*\circ\hat\lam^*_{T^*})((\tau')^q)=0$.

Next, by our assumption on $\rho$,
\[
(\pr\circ\rho^*\circ\hat\lam^*_{T^*})(\hat\sig)=(\pr\circ\rho^*\circ\res^*_j)(\sig)=(\rho\circ\res_j)(\sig)\in\dbV_n.
\]
Hence the matrix $(\rho^*\circ\lam^*_{T^*})(\hat\sig)$ has zero entries outside the main diagonal, the $(n+2)$-th column, and entry $(1,n+1)$.
Moreover, its $(1,n+1)$-entry is $[{\widehat I},I,j](\sig)$.
A direct computation of matrix commutators gives
\[
\begin{split}
\hat\pi^*(r)&=
(\rho^*\circ\hat\lam^*_{T^*})((\tau')^q[\tau,\hat\sig])=[(\rho^*\circ\res^*_j)(\tau),(\rho^*\circ\hat\lam^*_{T^*})(\hat\sig)] \\
=&[\hat\pi^*(\tau),(\rho^*\circ\hat\lam^*_{T^*})(\hat\sig)] \\
=&
\begin{bmatrix}
\begin{bmatrix}
1&0&0&\cdots&0&0\\
&1&0&\cdots&0&0\\
&&&\ddots&0&0\\
&&&&1&\bar\pi_j(\tau)\\
&&&&&1
\end{bmatrix}
,&
\begin{bmatrix}
1&0&0&\cdots&[{\widehat I},I,j](\sig)&*\\
&1&0&\cdots&0&*\\
&&&\ddots&0&*\\
&&&&1&*\\
&&&&&1
\end{bmatrix}
\end{bmatrix}
\\
=&\Id-\bar\pi_j(\tau)\cdot[{\widehat I},I,j](\sig)\cdot E_{1,n+2},
\end{split}
\]
where $E_{1,n+2}$ is as in Subsection \ref{subsection on preliminaries}.
The assertion now follows by examining the $(1,n+2)$-th entries.
\end{proof}

\subsection{Linking invariants and Massey products}
The linking structures of Example \ref{example of link type group}, on which our constructions of linking invariants were based, had presentations with defining relations as in Lemma \ref{linking invariant and relations of link type groups}.
We now apply Proposition \ref{the Hoechsmann formula} to relate such relations to the cohomology elements $\bar\rho^*(\omega_{n+1})$ and the linking invariant $[{\widehat I},I,j]$:

\begin{cor}
\label{linking invariant and pullbacks}
Consider the setup of Lemma \ref{linking invariant and relations of link type groups}.
Assume that $T_i\isom\dbZ_p$ for every $i\in I^*$ and that $N=\Ker(\hat\lam^*_{T^*})$ is generated as a closed normal subgroup of $\hat T^*$ by elements $r=(\tau')^q[\tau,\hat\sig]$ as in Lemma \ref{linking invariant and relations of link type groups}.
Then for every such $r$ one has
\[
(r,\bar\rho^*(\omega_{n+1}))=-\bar\pi_j(\tau)\cdot[{\widehat I},I,j](\sig),
\]
where $\bar\rho=\pr_{\dbU_{n+1},\overline{\dbU}_{n+1}}\circ\rho^*$ and the left-hand side is the pairing (\ref{pairing}).

\end{cor}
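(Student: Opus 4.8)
The plan is to obtain the identity as a direct combination of Proposition~\ref{the Hoechsmann formula} and Lemma~\ref{linking invariant and relations of link type groups}, the only genuine verification being that the exact sequence attached to $\calG_{I^*}$ fits the template required by the former. First I would check that
\[
1\to N\to\hat T^*\xrightarrow{\hat\lam^*_{T^*}}G_{I^*}\to1
\]
is an extension of the form~(\ref{extension}). Since $T_i\isom\dbZ_p$ for every $i\in I^*$, the free pro-$p$ product $\hat T^*=\star_{i\in I^*}T_i$ is a free pro-$p$ group. Moreover, by hypothesis $N$ is generated as a closed normal subgroup by elements $r=(\tau')^q[\tau,\hat\sig]$, and each such element lies in $(\hat T^*)^q[\hat T^*,\hat T^*]$; since the latter is a closed normal subgroup of $\hat T^*$, it follows that $N\le(\hat T^*)^q[\hat T^*,\hat T^*]$, as required.

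Next I would apply Proposition~\ref{the Hoechsmann formula} to the homomorphism $\rho^*\colon G_{I^*}\to\dbU_{n+1}$, i.e.\ with $G=G_{I^*}$ and $m=n+1$. Since $\rho^*$ is a globalization for $\calG_{I^*}$, we have $\rho^*\circ\hat\lam^*_{T^*}=\hat\pi^*$, so the homomorphism ``$\hat\pi$'' of that proposition is our $\hat\pi^*$ and its ``$\bar\rho$'' is $\bar\rho=\pr_{\dbU_{n+1},\overline{\dbU}_{n+1}}\circ\rho^*$. Noting that $\dbU_{n+1}$ consists of $(n+2)\times(n+2)$ matrices, so that $\pr_{1,m+1}=\pr_{1,n+2}$ is the projection onto the upper-right entry, the proposition gives on $N$ the equality $(\cdot,\bar\rho^*(\omega_{n+1}))=\pr_{1,n+2}\circ\hat\pi^*|_N$, where the left-hand side is the pairing~(\ref{pairing}).

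Finally, I would evaluate both sides at a generator $r=(\tau')^q[\tau,\hat\sig]$ and invoke Lemma~\ref{linking invariant and relations of link type groups}, which applies because the standing assumption $(\rho\circ\res_j)(Z_j)\subseteq\dbV_n$ — under which $[I,j]$ is defined — is part of the setup. This yields
\[
(r,\bar\rho^*(\omega_{n+1}))=(\pr_{1,n+2}\circ\hat\pi^*)(r)=-\bar\pi_j(\tau)\cdot[I,j](\sig),
\]
which is the claim. I do not expect a serious obstacle here; the proof is essentially bookkeeping, and the only points deserving care are matching the matrix sizes when specializing Proposition~\ref{the Hoechsmann formula} and confirming that $(\hat T^*,N)$ genuinely realizes the hypotheses of~(\ref{extension}).
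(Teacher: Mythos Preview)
Your proposal is correct and follows essentially the same approach as the paper: verify that $\hat T^*$ is free pro-$p$ and $N\le(\hat T^*)^q[\hat T^*,\hat T^*]$, apply Proposition~\ref{the Hoechsmann formula} with $m=n+1$ to get $(r,\bar\rho^*(\omega_{n+1}))=(\pr_{1,n+2}\circ\hat\pi^*)(r)$, and then invoke Lemma~\ref{linking invariant and relations of link type groups}. Your added remarks on why $\rho^*\circ\hat\lam^*_{T^*}=\hat\pi^*$ and on the matrix dimensions are accurate and make the bookkeeping explicit.
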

\begin{proof}
The assumption on the $T_i$ implies that their free pro-$p$ product $\hat T^*$ is a free pro-$p$ group.
The assumption on $N$ implies that $N\leq (\hat T^*)^q[\hat T^*,\hat T^*]$.
We may therefore apply Proposition \ref{the Hoechsmann formula} with $m=n+1$ and the central extension
\[
1\to N\to\hat T^*\xrightarrow{\hat\lam^*_{T^*}} G_{{\widehat I}^*}\to1,
\]
to obtain that 
\[
(r,\bar\rho^*(\omega_{n+1}))=(\pr_{1,n+2}\circ\hat\pi^*)(r).
\]
The assertion follows from Lemma \ref{linking invariant and relations of link type groups}.
\end{proof}

Massey products are defined in the general homological context of differential graded algebras.
In the special context of group cohomology and dimension $2$, they have the following convenient alternative definition, which is due to Dwyer \cite{Dwyer75} for discrete groups; see \cite{Efrat14}*{Sect.\ 8} for the profinite analog:

\begin{defin}
\rm
Given a pro-$p$ group $S$ and $\varphi_1\nek\varphi_m\in H^1(S)$, $m\geq2$, the \textsl{$m$-fold Massey product} $\langle\varphi_1\nek\varphi_m\rangle$ consists of all pullbacks
$\bar\rho^*(\omega_m)$ in $H^2(S)$, where $\bar\rho$ ranges over all pro-$p$ group homomorphisms $S\to\overline{\dbU}_m$ such that $\varphi_l=\pr_{l,l+1}\circ\bar\rho$, $l=1,2\nek m$.
\end{defin}

Thus in Corollary \ref{linking invariant and pullbacks}, the pullback $\bar\rho^*(\omega_{n+1})$ is an element of the Massey product $\langle\varphi_1\nek\varphi_m\rangle$, where $\varphi_l=\pr_{l,l+1}\circ\rho^*$, $l=1,2\nek n+1$.
On $T_k$, $k\in I^*$, we have, by (\ref{images of pi j}),
\[
\varphi_l\circ\res_k=\pr_{l,l+1}\circ\rho^*\circ\res_k
=\pr_{l,l+1}\circ\pi_k
=\begin{cases}\bar\pi_l,&\hbox{if }k=i_l,\\
0,&\hbox{otherwise.}
\end{cases}
\]
Since $\res_k(T_k)$, $k\in I^*$, generate $G_{I^*}$, this determines $\varphi_l$.

\begin{bibdiv}
\begin{biblist}

\bib{Amano14a}{article}{
   author={Amano, Fumiya},
   title={On R\'{e}dei's dihedral extension and triple reciprocity law},
   journal={Proc. Japan Acad. Ser. A Math. Sci.},
   volume={90},
   date={2014},
   pages={1--5},
}

\bib{Amano14b}{article}{
   author={Amano, Fumiya},
   title={On a certain nilpotent extension over ${\bf Q}$ of degree 64 and the 4-th multiple residue symbol},
   journal={Tohoku Math. J. (2)},
   volume={66},
   date={2014},
   pages={501--522},
}

\bib{AmanoMizusawaMorishita18}{article}{
   author={Amano, Fumiya},
   author={Mizusawa, Yasushi},
   author={Morishita, Masanori},
   title={On mod 3 triple Milnor invariants and triple cubic residue symbols in the Eisenstein number field},
   journal={Res. Number Theory},
   volume={4},
   date={2018},
   pages={Paper No. 7, 29},
}

\bib{AtaeiMinacTan17}{article}{
   author={Ataei, Masoud},
   author={Min\'a\v c, J\'an},
   author={T\^an, Nguyen Duy},
   title={Description of Galois unipotent extensions},
   journal={J. Algebra},
   volume={471},
   date={2017},
   pages={193--219},
}

\bib{Becker74}{article}{
author={Becker, Eberhard},
title={Euklidische K\"orper und euklidische  H\"ullen von K\"orper},
journal={J.\ reine angew.\ Math.},
volume={268-279},
date={1974},
pages={41\ndash52},
}

\bib{Dwyer75}{article}{
   author={Dwyer, William G.},
   title={Homology, Massey products and maps between groups},
   journal={J. Pure Appl. Algebra},
   volume={6},
   date={1975},
   pages={177\ndash190},
}

\bib{Efrat97}{article}{
   author={Efrat, Ido},
   title={Free pro-$p$ product decompositions of Galois groups},
   journal={Math. Z.},
   volume={225},
   date={1997},
   pages={245--261},
}

\bib{Efrat06}{book}{
   author={Efrat, Ido},
   title={Valuations, Orderings, and Milnor $K$-Theory},
   series={Mathematical Surveys and Monographs},
   volume={124},
   publisher={American Mathematical Society, Providence, RI},
   date={2006},
   pages={xiv+288},
}

\bib{Efrat14}{article}{
   author={Efrat, Ido},
   title={The Zassenhaus filtration, Massey products, and representations of profinite groups},
   journal={Adv. Math.},
   volume={263},
   date={2014},
   pages={389\ndash411},
}

\bib{Efrat22}{article}{
   author={Efrat, Ido},
   title={Generalized Steinberg relations},
   journal={Res. Number Theory},
   volume={8},
   date={2022},
   pages={Paper No. 92, 16},
}

\bib{Efrat23}{article}{
   author={Efrat, Ido},
   title={The kernel generating condition and absolute Galois groups},
   journal={Israel J. Math.},
   volume={257},
   date={2023},
   pages={217\ndash250},
}

\bib{EfratMinac11}{article}{
   author={Efrat, Ido},
   author={Min\'{a}\v{c}, J\'{a}n},
   title={On the descending central sequence of absolute Galois groups},
   journal={Amer. J. Math.},
   volume={133},
   date={2011},
   pages={1503--1532},
}

\bib{Gartner11}{thesis}{
author={G\"artner, Jochen},
title={Mild pro-$p$-groups with trivial cup-product},
place={Universit\"at Heidelberg},
type={Dissertation},
date={2011},
}

\bib{Haberland78}{book}{
   author={Haberland, Klaus},
   title={Galois Cohomology of Algebraic Number Fields},
   note={With appendices by H.\ Koch and Th.\ Zink},
   publisher={VEB Deutscher Verlag der Wissenschaften, Berlin},
   date={1978},
   pages={145},
}

\bib{Hoechsmann66}{article}{
   author={Hoechsmann, Klaus},
   title={\"{U}ber die Gruppe der maximalen $l$-Erweiterung eines globalen K\"{o}rpers},
   journal={J. Reine Angew. Math.},
   volume={222},
   date={1966},
   pages={142--147},
}

\bib{Hoechsmann68}{article}{
author={Hoechsmann, Klaus},
title={Zum Einbettungsproblem},
journal={J.\ reine angew.\ Math.},
volume={229},
date={1968},
pages={81\ndash106},
}

\bib{IrelandRosen82}{book}{
   author={Ireland, Kenneth F.},
   author={Rosen, Michael I.},
   title={A Classical Introduction to Modern Number Theory},
   series={Graduate Texts in Mathematics},
   volume={84},
   edition={Revised edition},
   publisher={Springer-Verlag, New York-Berlin},
   date={1982},
   pages={xiii+341},
}

\bib{Iwasawa55}{article}{
   author={Iwasawa, Kenkichi},
   title={On Galois groups of local fields},
   journal={Trans. Amer. Math. Soc.},
   volume={80},
   date={1955},
   pages={448--469},
}

\bib{Koch95}{book}{
   author={Koch, Helmut},
   title={Galois Theory of $p$-Extensions},
   series={Springer Monographs in Mathematics},
   publisher={Springer-Verlag, Berlin},
   date={2002},
   pages={xiv+190},
}


\bib{Milnor57}{article}{
   author={Milnor, John},
   title={Isotopy of links},
   conference={
      title={Algebraic geometry and topology,},   
   },
   book={
      publisher={Princeton Univ. Press, Princeton, NJ},
   },
   date={1957},
   pages={280--306},
}

\bib{Mizusawa19}{article}{
   author={Mizusawa, Yasushi},
   title={On pro-$p$ link groups of number fields},
   journal={Trans. Amer. Math. Soc.},
   volume={372},
   date={2019},
   pages={7225--7254},
}
\bib{Morishita02}{article}{
   author={Morishita, Masanori},
   title={On certain analogies between knots and primes},
   journal={J. Reine Angew. Math.},
   volume={550},
   date={2002},
   pages={141--167},
}

\bib{Morishita04}{article}{
   author={Morishita, Masanori},
   title={Milnor invariants and Massey products for prime numbers},
   journal={Compos. Math.},
   volume={140},
   date={2004},
   pages={69--83},
}

\bib{Morishita12}{book}{
   author={Morishita, Masanori},
   title={Knots and Primes},
   series={Universitext},
   publisher={Springer, London},
   date={2012},
   pages={xii+191},
}

\bib{Neukirch99}{book}{
   author={Neukirch, J\"{u}rgen},
   title={Algebraic Number Theory},
   series={Grundlehren der mathematischen Wissenschaften},
   volume={322},
   publisher={Springer-Verlag, Berlin},
   date={1999},
   pages={xviii+571},
}

\bib{NeukirchSchmidtWingberg}{book}{
  author={Neukirch, J{\"u}rgen},
  author={Schmidt, Alexander},
  author={Wingberg, Kay},
  title={Cohomology of Number Fields, Second edition},
  publisher={Springer},
  place={Berlin},
  date={2008},
}

\bib{Porter80}{article}{
   author={Porter, Richard},
   title={Milnor's $\bar \mu $-invariants and Massey products},
   journal={Trans. Amer. Math. Soc.},
   volume={257},
   date={1980},
   pages={39\ndash71},
}

\bib{Redei39}{article}{
   author={R\'{e}dei, Ladislaus},
   title={Ein neues zahlentheoretisches Symbol mit Anwendungen auf die Theorie der quadratischen Zahlk\"{o}rper. I},
   journal={J. Reine Angew. Math.},
   volume={180},
   date={1939},
   pages={1--43},
}

\bib{Serre02}{book}{
   author={Serre, Jean-Pierre},
   title={Galois Cohomology},
   series={Springer Monographs in Mathematics},
   publisher={Springer-Verlag, Berlin},
   date={2002},
   pages={x+210},
}

\bib{Turaev79}{article}{
author={Turaev, V.G.},
title={Milnor invariants and Massey products},
journal={J.\ Soviet Math.},
volume={12},
date={1979},
pages={128\ndash137},
}

\bib{Vogel05}{article}{
   author={Vogel, Denis},
   title={On the Galois group of 2-extensions with restricted ramification},
   journal={J. Reine Angew. Math.},
   volume={581},
   date={2005},
   pages={117--150},
}

\bib{Weir55}{article}{
   author={Weir, A. J.},
   title={Sylow $p$-subgroups of the general linear group over finite fields of characteristic $p$},
   journal={Proc. Amer. Math. Soc.},
   volume={6},
   date={1955},
   pages={454--464},
}

\end{biblist}
\end{bibdiv}

\end{document}